\documentclass[reqno,hidelinks,11pt]{amsart}
\usepackage[export]{adjustbox}
\usepackage{amsmath,amsthm,amsfonts,amssymb}
\usepackage{parskip,fullpage}
\usepackage{hyperref}
\usepackage[dvipsnames]{xcolor}
\usepackage{comment}
\usepackage{graphicx,soul}
\usepackage{geometry}
\usepackage[shortlabels]{enumitem}
\usepackage{pgfplots}
\pgfplotsset{compat=1.18}
\usepackage{subcaption}
\hypersetup{
    colorlinks=true,
    citecolor=red
}
\usepackage{float}
\usepackage[
backend=biber,
style=alphabetic,
sorting=anyt,
maxnames=99
]{biblatex}
\numberwithin{equation}{section}
\newtheorem{theorem}{Theorem}[section]
\newtheorem{lemma}[theorem]{Lemma}
\newtheorem{remark}[theorem]{Remark}

\newtheorem{corollary}[theorem]{Corollary}
\newtheorem{proposition}[theorem]{Proposition}

\newtheorem{conjecture}[theorem]{Conjecture}

\newtheorem{problem}[theorem]{Problem}

\def\RR{\mathbb{R}}
\def\CC{\mathbb{C}}

\def\b{\beta}
\def\d{\delta}

\def\A{\mathcal{A}}

\def\X{\mathcal{X}}

\def\diag{\operatorname{diag}}

\def\S{\mathcal{S}}
\def\Sym{\operatorname{Sym}}
\def\Re{\operatorname{Re}}
\def\Im{\operatorname{Im}}
\def\tr{\operatorname{tr}}

\newcommand{\suchthat}{\;\ifnum\currentgrouptype=16 \middle\fi|\;}

\title{Eigenvalues of locally positive semidefinite matrices: Non-convexity and Geometry}
\author{Jose Acevedo}
\address{Independent Researcher, Bucaramanga, Colombia}
\email{jacevedo.math@gmail.com}

\author{Grigoriy Blekherman}
\address{School of Mathematics, Georgia Institute of Technology, 686 Cherry Street Atlanta, GA 30332, USA}
\email{greg@math.gatech.edu}

\author{Sebastian Debus}
\address{Fachbereich Mathematik und Statistik, Universität Konstanz, 78457 Konstanz, Germany}
\email{sebastian.debus@uni-konstanz.de}

\author{Seokbin Lee}
\address{School of Mathematics, Georgia Institute of Technology, 686 Cherry Street Atlanta, GA 30332, USA}
\email{slee3379@gatech.edu}

\author{Cordian Riener}
\address{Department of Mathematics and Statistics, UiT - the Arctic University of Norway, 9037 Troms\o, Norway}
\email{cordian.riener@uit.no}
\date{\today}
\subjclass[2020]{Primary 15A99; Secondary 14P10, 90C47}
\keywords{Locally positive semidefinite matrices, spectral geometry, semialgebraic sets, principal minors, optimal point configurations}
\begin{document}
\begin{abstract}
A real symmetric matrix is called $d$-locally positive semidefinite if all of its $d \times d$ principal submatrices are positive semidefinite. We investigate the spectral geometry of $d$-locally positive semidefinite matrices. The set of vectors of eigenvalues of $d$-locally positive semidefinite matrices of size $n \times n$ is fully understood and known to be convex when $d \in \{1,n-1,n\}$ \cite{blekherman2022hyperbolic}. In the smallest remaining case $n=4, d=2$, non-convexity of the set of vectors of eigenvalues was proved in \cite{kozhasov2023eigenvalues}, but even in this case the full description was unknown.
We provide a basic semialgebraic description of the set of vectors of eigenvalues for $n=4, d=2$ by establishing a Fischer-type inequality for $2$-locally positive semidefinite matrices of size $4 \times 4$ and prove non-convexity for $n \geq 4$ and $d \in \{2, n-2\}$. Non-convexity is established via solving certain non-smooth and non-convex min-max point configuration problems in the complex plane, which could be interesting in themselves.
Similar problems were considered in \cite{nesterenko2024submatrices,sengupta2026submatrices} in the context of matrix decomposition and approximation.
\end{abstract}
\maketitle

\section{Introduction}

Positive semidefinite (psd) matrices are ubiquitous in fields such as optimization, convex analysis, functional analysis, operator theory, and numerical analysis. However, verifying positive semidefiniteness of a matrix can be computationally expensive for large semidefinite programs.
This motivates a sparse relaxation introduced in \cite{blekherman2022sparse}. A symmetric matrix $X \in \Sym_n$ is \emph{$d$-locally positive semidefinite} ($d$-locally psd) if every $d \times d$ principal submatrix of $X$ is positive semidefinite.
The sets of all $d$-locally psd matrices, denoted by $\mathcal{S}^{n,d}$, form a nested chain of convex cones
\[
\mathcal{S}^{n,1} \supsetneq \mathcal{S}^{n,2} \supsetneq \cdots \supsetneq 
\mathcal{S}^{n,n}.
\]
This hierarchy interpolates between the cone of matrices with nonnegative diagonal entries ($d=1$) and the cone of psd matrices $\S^n_+$ ($d=n$). The dual cone of $\mathcal{S}^{n,d}$ consists of the psd matrices of factor width $d$ and was introduced earlier in \cite{boman2005factor}.

We can ask how well $d$-locally psd matrices approximate the full cone of positive semidefinite matrices.
Lower and upper bounds on the distance between these cones for matrices with a fixed Frobenius norm were established in \cite{blekherman2022sparse}, where the authors also described a connection to matrices satisfying the restricted isometry property. More recently, classical determinant inequalities for psd matrices have been extended to $\S^{n,n-1}$ in \cite{fallat2026determinant}, offering an alternative perspective on the quality of the approximation of $\S^n_+$ by $\S^{n,n-1}$.

It is also natural to compare the \emph{spectral geometry} of the cones $\S^{n,d}$ to that of the full cone of psd matrices. It is well-known that a symmetric matrix is positive semidefinite if and only if its eigenvalues are nonnegative. However, understanding the possible eigenvalues of locally psd matrices turns out to be quite intricate and has connections to spectral graph theory, frame theory\footnote{In forthcoming work \cite{acevedo2026}, we make these connections explicit. We establish that the geometry of 2-locally psd matrices yields connections to spectral graph theory, and the containment of certain boundary points of $H(e_2)$ in $\lambda(\mathcal{S}^{n,2})$ completely encodes the existence of real equiangular tight frames.}, signal processing, and compressed sensing.  

We study the set $\lambda(\mathcal{S}^{n,d})$ of all (unordered) $n$-tuples of eigenvalues of matrices in $\mathcal{S}^{n,d}$. Geometry of $\lambda(\S^{n,d})$ was first considered in \cite{blekherman2022hyperbolic}, where the authors introduced hyperbolic relaxations of $\S^{n,d}$. This led to a convex relaxation of the set $\lambda(\S^{n,d})$ by the hyperbolicity cone $H(e_d)$ of the $d$-th elementary symmetric polynomial in $n$ variables, denoted by $e_d$. 
It is known that $\lambda(\mathcal{S}^{n,n}) = \mathbb{R}^n_{\geq 0}$ and that the hyperbolic relaxation is exact, i.e.,
$\lambda(\mathcal{S}^{n,d}) = H(e_d)$, if and only if $d \in \{1, n - 1, n\}$. Therefore, $\lambda(\mathcal{S}^{n,d})$  is convex in these extreme cases. For all the intermediate values of $d$, the strict inclusion $\lambda(\S^{n,d}) \subsetneq H(e_d)$ holds.

Geometry of these intermediate sets appears to be quite complex.
Using symbolic computation, Kozhasov \cite{kozhasov2023eigenvalues} showed that $\lambda(\mathcal{S}^{4,2})$ is not convex. Specifically, his proof demonstrated that while the endpoints of the line segment connecting $(1,1,0,-\frac{1}{2})$ and $(1,1,-\frac{1}{2},0)$ are contained in $\lambda(\mathcal{S}^{4,2})$, their midpoint is not. However, the complexity of these exact computations restricted this approach to $n = 4$. Moreover, the overall geometry of $\lambda(\S^{4,2})$  was left largely unresolved. 

\subsection*{Main results}

We provide an explicit semialgebraic description of the set $\lambda(\S^{4,2})$ in Theorem \ref{theorem: l(S42)}. By analyzing eigenvalues of the form $(1,1,s,t)$, we clearly show how and when convexity breaks, fully generalizing the results in \cite{kozhasov2023eigenvalues} (see Corollary \ref{cor: L42}). The result is derived from a Fischer-type inequality for matrices in $\S^{4,2}$, which could be of independent interest (Proposition \ref{proposition: 2-local Fischer}). We also conclude that the ordered spectrum is not convex for $(n,d)=(4,2)$ (Remark \ref{rem:ordered spectrum non-convex}). Convexity of the ordered spectrum was raised in \cite[Section~2]{kozhasov2023eigenvalues}, and was left open in all cases $d \notin \{1, n - 1, n\}$. 

Furthermore, we prove that the sets $\lambda(\S^{n,d})$ are non-convex for all $n \geq 4$ and $d \in \{2, n-2\}$, and for $d=2$ we explicitly construct two points in $\lambda(\S^{n,2})$ such that their convex hull intersects $\lambda(\S^{n,2})$ only at the endpoints (Corollary \ref{corollary: nonconvexity}). We focus our investigation on two-dimensional strata of the symmetric spectra $\lambda(\S^{n,d})$ of the form $(1,\ldots,1,s,t)$. Even in these restricted cases, we encounter interesting and complicated semialgebraic sets. 

We show that identifying some extremal points of these sets, as formulated in Problem \ref{problem:min}, can be reframed and solved as a min-max point configuration problem in the complex plane or Euclidean plane (see Proposition \ref{prop: equivalences} and Theorem \ref{theorem: optimal config d=2} for a solution for $d=2$ and Theorem \ref{thm: optimality of regular n-gon} for a solution for $d=n-2$). 
This geometric reduction closely parallels a structurally related extremal matrix problem initially formulated by Goreinov, Tyrtyshnikov and
Zamarashkin \cite{goreinov1997theory}, according to Nesterenko \cite{nesterenko2024submatrices}. While our work focuses on minimizing the maximum eigenvalue among all $d \times d$ principal submatrices of an $n \times n$ matrix, their problem concerns maximizing the minimal singular value among all $d \times d$ submatrices of an $n \times d$ matrix. 
For $d=2$ this problem was recently solved \cite{sengupta2026submatrices}. 

\subsection{Notation and Terminology}

Let $\Sym_n$ denote the vector space of $n\times n$ real symmetric matrices. For $X\in \Sym_n$, we write $X\succeq 0$ if $X$ is \emph{positive semidefinite} (psd), and we denote the convex cone of psd matrices by $\S^n_+$. We sometimes write $|M|$ for the determinant of a square matrix $M$, and we write $A \sim B$ if two matrices $A,B$ are similar. Also, if $M$ is a symmetric matrix and $D$ is an invertible diagonal matrix, then we will call the matrix $DMD$ a \emph{diagonal scaling} of $M$.

For each integer $d \in \{1, \ldots, n\}$, we say a matrix $M\in\Sym_n$ is $d$-\emph{locally positive semidefinite} if every $d\times d$ principal submatrix of $M$ is psd. The set of all $d$-locally psd matrices is denoted by $\S^{n,d}$. We observe that a symmetric matrix is $d$-locally psd if and only if all principal minors of size at most $d$ are nonnegative.

If a symmetric matrix $M$ has eigenvalues (spectrum) $\{\lambda_1,\ldots,\lambda_n\}$, we refer to the $n$-tuple $\lambda = (\lambda_1,\ldots,\lambda_n)$, and any of its permutations, as a \emph{symmetric spectrum} of $M$.
Our main objects of study are the sets of symmetric spectra of $d$-locally psd matrices
\[\lambda(\S^{n,d}):=\{(\lambda_1,\dots,\lambda_n)\in\RR^n\, \suchthat \,\exists X\in\S^{n,d}:X\sim\diag(\lambda_1,\dots,\lambda_n)\},\]
and their two-dimensional slices
\[
\mathcal{L}_{n, d}:= \left\{(s, t)\,\suchthat\,(1, \ldots, 1, s, t) \in \lambda(\S^{n, d})\right\}.
\]

Let $e_d^n$ denote the $d$-th elementary symmetric polynomial in $n$ variables, given by
\[ e_d(x_1,\ldots,x_n) = \sum_{1 \leq i_1 < \ldots < i_d \leq n} x_{i_1}\cdots x_{i_d} .
\]
Most of the time we will suppress the number of variables $n$ and simply write $e_d$, since the number of variables is usually clear from context.
The \emph{hyperbolicity cone} of $e_d$ with respect to the all-ones vector $\mathbf{1} \in \RR^n$ is
\[
H(e_d)=\{x\in\RR^n\,\suchthat\, e_1(x)\ge0,\dots,e_d(x)\ge0\}.
\]

For indexing, let $[n]=\{1,\ldots,n\}$ and let $\binom{[n]}{d}$ denote the set of all $d$-element subsets of $[n]$. For a matrix $X \in \RR^{m \times n}$ and a subset $J \in \binom{[n]}{d}$, we write $X_J$ for the $m \times d$ submatrix of $X$ consisting of the columns indexed by $J$. For a square matrix $A \in \RR^{n \times n}$ we write $A_J$ for the $d \times d$ principal submatrix of $A$ whose columns and rows are indexed by $J$, e.g., $A_{\{1,2\}}$ is the $2\times 2$ submatrix of $A$ defined by taking its first two rows and columns.

For a square matrix $X$, we write $\rho(X)$ for its spectral radius (the maximum absolute value of its eigenvalues), and $\|X\|_F = \sqrt{\tr (XX^\top)}$ for its Frobenius norm. 

Throughout the paper, indices will be labeled cyclically in the context of summations, e.g., we will assume $x_{n + 1}=x_1$ when writing $\sum_{k=1}^n h(x_k, x_{k + 1})$.

\subsection{Preliminaries}

Note that each $\S^{n,d}$ is a convex cone. This implies that the set $\lambda(\S^{n,d})$ is star-shaped with respect to $\RR_{\geq 0}^n$ (see \cite[Proposition~1.4]{kozhasov2023eigenvalues}). 
Moreover, if $M \in \S^{n,d}$ and $D \in \Sym_n$ is an invertible diagonal matrix, then the diagonal scaling $DMD$ is also $d$-locally psd. 

As observed in \cite{blekherman2022hyperbolic}, the matrix 
\begin{align}\label{eq: G(n,d)}
G(n, d):= \frac{d}{d-1}I - \frac{1}{d-1}\mathbf{1}\mathbf{1}^\top
\end{align}
lies in $\S^{n, d}$ but is not psd for $d < n$. In particular, all diagonal scalings $DG(n,d)D$ is $d$-locally psd. 

For an $n\times n$ matrix $A$ with eigenvalues $\lambda_1,\ldots,\lambda_n$, the sum of its $d \times d$ principal minors is equal to the $d$-th elementary symmetric polynomial in the eigenvalues of $A$:
\begin{align}\label{eq: det(A) in eigenvalues}
\sum_{J \in \binom{[n]}{d}}\det(A_J) = e_d(\lambda_1,\ldots,\lambda_n).    
\end{align}

Since principal minors of size at most $d$ are nonnegative for all $d$-locally positive semidefinite we have that $\lambda(\S^{n,d})\subseteq H(e_d)$. The sets $H(e_d)$ are hyperbolicity cones and are thus convex. Furthermore, the boundary of $H(e_d)$ is precisely its intersection with the hypersurface defined by $e_d=0$. In \cite{MR4681290} the authors considered generalizations of these hyperbolic relaxations to more general sums of linear polynomial in principal minors.

Since $\S^{n,1}\supsetneq\S^{n,2}\supsetneq\dots\supsetneq\S^{n,n}$, the sets of symmetric spectra naturally satisfy the inclusions 
\[\lambda(\S^{n,1})\supseteq \lambda(\S^{n,2})\supseteq \dots\supseteq \lambda(\S^{n,n}).\]
Moreover, each inclusion is strict \cite{blekherman2022hyperbolic}.
If $\lambda \in \lambda(\S^{n, d})$, then $\lambda$ has at most $n-d$ negative entries. This follows from Descartes' rule of signs; since $\lambda \in H(e_d)$, we have $e_1(\lambda),\dots ,e_d(\lambda)\ge0$, so the polynomial \[(x+\lambda_1)\cdots(x+\lambda_n)=x^n+e_1(\lambda)x^{n-1}+\dots+e_d(\lambda)x^{n-d}+\dots+e_{n-1}(\lambda)x+e_n(\lambda)\] has at most $n-d$ sign changes.

\section{Basic semialgebraic description of \texorpdfstring{$\lambda(\mathcal{S}^{4,2})$}{lambda(S4,2)} } \label{section: 2-locally psd}

In this section, we give basic semialgebraic descriptions of $\lambda(\S^{4, 2})$ and its slice $\mathcal{L}_{4, 2}$. The descriptions are given in Theorem \ref{theorem: l(S42)} and Corollary \ref{cor: L42} respectively. The key insight is the following Fischer-type inequality for matrices in $\S^{4,2}$, which may be interesting in itself.

\begin{proposition}\label{proposition: 2-local Fischer}
    Let $A=(a_{ij})_{1 \leq i,j \leq 4} \in \S^{4, 2}$. We have
    \begin{align*}
       |A_{\{1, 2\}}| |A_{\{3, 4\}}|+ |A_{\{1, 3\}}| |A_{\{2, 4\}}|+ |A_{\{1, 4\}}||A_{\{2, 3\}}| \ge |A|~.
    \end{align*}
    For nonsingular $A$, equality is achieved if and only if all $2\times2$ minors in a $4$-cycle vanish and the product of the entries in the same $4$-cycle is negative, e.g. $|A_{\{1,2\}}|=|A_{\{2, 3\}}|=|A_{\{3, 4\}}|=|A_{\{1,4\}}|=0$ and $a_{12}a_{23}a_{34}a_{14}<0$.
\end{proposition}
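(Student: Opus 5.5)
Both sides of the inequality transform the same way under diagonal scaling $A\mapsto DAD$: each side is multiplied by $\prod_i d_i^2$. So the first step is to reduce to unit diagonal. If some $a_{ii}=0$, then $2$-local positive semidefiniteness gives $a_{ij}^2\le a_{ii}a_{jj}=0$, so row $i$ vanishes. Then $|A|=0$, while the left-hand side is a sum of products of nonnegative $2\times 2$ minors, hence $\ge 0$.

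Otherwise we may assume $a_{ii}=1$. Write $a=a_{12}$, $b=a_{13}$, $c=a_{14}$, $d=a_{23}$, $e=a_{24}$, $f=a_{34}$. All of these lie in $[-1,1]$, and conversely any such choice gives a matrix in $\S^{4,2}$. Expanding the determinant over permutations gives
\[
|A| = 1-\textstyle\sum x^2 + (af)^2+(be)^2+(cd)^2 + 2T - 2C,
\]
where $T=abd+ace+bcf+def$ is the sum over triangles and $C=abef+acdf+bcde$ is the sum over the three $4$-cycles (these carry sign $-1$ as odd permutations). One should double-check this against the all-ones matrix, which gives $1-6+3+8-6=0$. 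The left-hand side is
\[
\textstyle\sum_{\text{pairings}}(1-x^2)(1-y^2)=3-\sum x^2+(af)^2+(be)^2+(cd)^2 .
\]
Hence
\[
\text{LHS}-|A| = 2\,(1 - T + C),
\]
and all quadratic terms cancel.

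The function $g=1-T+C$ is multilinear in the six variables. Its minimum over the cube $[-1,1]^6$ is therefore attained at a vertex, i.e.\ at a $\pm1$ sign pattern. At such vertices, let $t_1,\dots,t_4\in\{\pm1\}$ be the triangle products. Since every edge lies in exactly two triangles, $t_1t_2t_3t_4=1$. Each $4$-cycle product equals the product of two triangles sharing an edge, so $C=t_1t_2+t_1t_3+t_1t_4$ and $T=\sum t_i$. The number of negative $t_i$ is even, which leaves three cases:
\begin{itemize}
\item all $t_i=+1$: $g=1-4+3=0$;
\item exactly two $t_i=-1$: $T=0$, $C=-1$, so $g=0$;
\item all $t_i=-1$: $g=1+4+3=8$.
\end{itemize}
Thus $g\ge 0$ on the cube, which proves the inequality.

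The main obstacle is the equality characterization for nonsingular $A$. I would use multilinearity again: on the box, $g$ is affine in each coordinate. So if $g=0$ at a point and some coordinate $x$ is strictly interior, then $g$ vanishes along the whole segment in that coordinate, and $\partial g/\partial x=0$ there. I would use this to propagate zeros to faces and classify the zero set of $g$ on $[-1,1]^6$ face by face.

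The zero set certainly contains all points where the four edges of some $4$-cycle are $\pm1$ with cycle product $-1$. For example, $a,d,f,c=\pm1$ with $adfc=-1$: the terms in $g$ then combine to $(1-\text{cycle stuff})$ and vanish identically in the chord variables $b,e$; one should verify this by direct substitution. I would then show that every other zero forces $|A|=0$. The remaining zeros are essentially vertices of the first two types above, where all entries are $\pm1$ and $A$ has rank one or two; intermediate faces on which $g$ vanishes degenerate similarly. Verifying this last step requires a careful but finite case analysis of which faces $g$ vanishes on. Translated back through the diagonal scaling, $|x_{ij}|=1$ means the minor $|A_{\{i,j\}}|=0$, and the sign condition becomes $a_{12}a_{23}a_{34}a_{14}<0$, which is the stated equality condition.
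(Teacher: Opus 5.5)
Your proof of the inequality is the paper's own argument. You reduce to unit diagonal and show $\text{LHS}-|A|=2(1-T+C)$. You then minimize the multiaffine $g$ at a vertex of $[-1,1]^6$ and evaluate via triangle products; using $t_4=t_1t_2t_3$, your $g$ is exactly the paper's $(1-t_1)(1-t_2)(1-t_3)$. That part is complete.

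The gap is the \emph{only if} direction of the equality clause for nonsingular $A$. You defer it to an unperformed case analysis, and you misdescribe its expected outcome.

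First, besides the $4$-cycle faces, $g$ vanishes on entire $3$-dimensional faces, not ``essentially vertices''. For $a=b=d=1$ one gets $T=1+ce+cf+ef=1+C$, so $g\equiv0$ for all $c,e,f\in[-1,1]$. Second, at the vertex $a=b=c=f=1$, $d=e=-1$ (two negative triangles), $A$ has rank $3$, not one or two.

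The missing step is the actual classification, which is short:
\begin{enumerate}
\item If $g(p)=0$, then by nonnegativity and multiaffinity $g\equiv0$ on the smallest face containing $p$, namely the face fixing exactly the coordinates with $|p_e|=1$. Hence that face contains no vertex with $t_1=t_2=t_3=t_4=-1$.
\item Those vertices are exactly the signings $x_{ij}=-\epsilon_i\epsilon_j$ with $\epsilon\in\{\pm1\}^4$. To see this, negate all entries and use that a signing of $K_4$ with all triangle products $+1$ has the form $\epsilon_i\epsilon_j$.
\item So the face avoids them if and only if its fixed edges contain a triangle with product $+1$ or a $4$-cycle with product $-1$. These are precisely the paper's sixteen $3$-dimensional and twenty-four $2$-dimensional faces.
\item In the triangle case, say on $\{1,2,3\}$ with $a_{ij}=\epsilon_i\epsilon_j$, rows $1,2,3$ of $A$ lie in the span of $(\epsilon_1,\epsilon_2,\epsilon_3,0)$ and $(0,0,0,1)$. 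Hence $|A|=0$.
\end{enumerate}
Nonsingularity therefore forces the $4$-cycle case, which is the stated condition. Without this observation about the triangle faces, the \emph{only if} direction is not proved.
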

\begin{proof} 
    If $A$ has a zero diagonal entry, say $a_{11}=0$, then since $A$ is $2$-locally psd, every entry in the first row and column must be zero. So, $|A|=0$ and $|A_{\{1,2\}}|=|A_{\{1,3\}}|=|A_{\{1,4\}}|=0$, and the inequality holds. Assume now that all diagonal entries of $A$ are positive. By a diagonal scaling we may assume without loss of generality that 
    \[
    A=\begin{bmatrix}
        1 & a_{12} & a_{13} & a_{14}\\
        a_{12} & 1 & a_{23} & a_{24}\\
        a_{13} & a_{23} & 1 & a_{34}\\
        a_{14} & a_{24} & a_{34} & 1
    \end{bmatrix}.
    \]
    Since $A$ is $2$-locally psd, we have $a_{ij}\in[-1,1]$.
    
    Expanding $\Delta:=\frac12(\text{LHS}-\text{RHS})$ of the inequality to be proven, we obtain
    \[
    1-a_{12}a_{13}a_{23}-a_{12}a_{14}a_{24}-a_{13}a_{14}a_{34}-a_{23}a_{24}a_{34}+a_{13}a_{14}a_{23}a_{24}+a_{12}a_{14}a_{23}a_{34}+a_{12}a_{13}a_{24}a_{34},
    \]
    which is multiaffine, so it achieves its minimum at a vertex of the hypercube $[-1,1]^6$. Setting $p=a_{12}a_{13}a_{23}$, $q=a_{12}a_{14}a_{24}$, $r=a_{13}a_{14}a_{34}$, modulo $a_{ij}\in\{-1,1\}$, the above expression equals \[
    1-p-q-r-pqr+pq+pr+qr=(1-p)(1-q)(1-r)
    \] which is nonnegative since $p,q,r\in\{-1,1\}$.

    Since $\Delta$ is multiaffine and nonnegative on the hypercube $H:=[-1,1]^6$, its vanishing set in $H$ must consist precisely of the union of the faces of $H$ on which $\Delta$ vanishes. Analyzing the vertex condition $(1-p)(1-q)(1-r)=0$, we conclude that the vanishing set of $\Delta$ on $H$ is the union of twenty four $2$-dimensional faces and sixteen $3$-dimensional faces of $H$.

    The $24$ $2$-dimensional faces correspond to the $8$ possible assignments of $\pm1$ to each edge of the three $4$-cycles of $K_4$, such that the product of the edges is $-1$. For example, if $a_{12}=a_{23}=a_{34}=1$ and $a_{14}=-1$, then $\Delta=0$ for any $a_{13}$ and $a_{24}$.

    The $16$ $3$-dimensional faces correspond to the $4$ possible assignments of $\pm1$ to each of the four $3$-cycles of $K_4$, such that the product of the edges is $1$. For example, if $a_{12}=a_{13}=a_{23}=1$, then $\Delta=0$ for any $a_{14},a_{24},a_{34}$. Observe that in each of these faces $|A|=0$.

    So, if equality is achieved for a nonsingular $A\in\mathcal{S}^{4,2}$, then $A$ can be diagonally scaled, up to a permutation of rows and columns, to a matrix $M$ with $m_{12}=m_{23}=m_{34}=1$ and $m_{14}=-1$, i.e., $|A_{\{1,2\}}|=|A_{\{2,3\}}|=|A_{\{3,4\}}|=|A_{\{1,4\}}|=0$ and $a_{12}a_{23}a_{34}a_{14}<0$.
\end{proof}

Using Proposition \ref{proposition: 2-local Fischer}, we establish an inequality for matrices in $\S^{4,2}$ that compares two coefficients of the characteristic polynomial (compare with \eqref{eq: det(A) in eigenvalues}). 

\begin{proposition}\label{prop: e_2^2>=4det}
Let $A \in \S^{4, 2}$. We have
\[
e_2(A)^2\ge 4\det(A),
\]
where $e_2(A) = \sum_{1 \leq i < j \leq 4} |A_{\{i,j\}}|$.
\end{proposition}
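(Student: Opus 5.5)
We pair the six $2\times 2$ principal minors into the three complementary pairs and write
\[
x_1=|A_{\{1,2\}}|,\ y_1=|A_{\{3,4\}}|,\quad x_2=|A_{\{1,3\}}|,\ y_2=|A_{\{2,4\}}|,\quad x_3=|A_{\{1,4\}}|,\ y_3=|A_{\{2,3\}}|.
\]
All six quantities are nonnegative because $A\in\S^{4,2}$. With this notation, $e_2(A)=\sum_{k=1}^3 (x_k+y_k)$, and Proposition \ref{proposition: 2-local Fischer} reads $\det(A)\le \sum_{k=1}^3 x_ky_k$. It is therefore enough to prove the purely numerical inequality
\[
\Bigl(\sum_{k=1}^3 (x_k+y_k)\Bigr)^2\ \ge\ 4\sum_{k=1}^3 x_ky_k
\]
for nonnegative reals $x_k,y_k$. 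Combined with the Fischer-type bound, this gives $e_2(A)^2\ge 4\det(A)$.

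For the numerical inequality, set $s_k=x_k+y_k\ge 0$. By AM--GM, $4x_ky_k\le s_k^2$. Since all $s_k\ge 0$, the cross terms in the expansion are nonnegative, so
\[
\Bigl(\sum_k s_k\Bigr)^2=\sum_k s_k^2+2\sum_{j<k}s_js_k\ \ge\ \sum_k s_k^2\ \ge\ 4\sum_k x_ky_k.
\]
Chaining the two estimates gives the claim. If $\det(A)<0$ the statement is trivial anyway, but the argument above does not even need that case split.

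I do not expect a real obstacle, since all the work is carried by Proposition \ref{proposition: 2-local Fischer}. The only point that needs care is that the nonnegativity of the $2\times2$ minors, which is exactly the $2$-local psd hypothesis, is used in both steps: in AM--GM, and in discarding the cross terms $s_js_k$. Optionally, one could also record when equality holds. It requires at most one $s_k$ to be nonzero, and that one to have $x_k=y_k$, together with equality in the Fischer-type inequality. This is not needed for the statement, however.
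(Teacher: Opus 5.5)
Your proposal is correct and follows essentially the same route as the paper's proof. Both group the six $2\times2$ minors into complementary pairs, drop the nonnegative cross terms, apply AM--GM to each pair, and conclude with Proposition \ref{proposition: 2-local Fischer}; your optional equality condition also matches the paper's Remark \ref{rem: 4-cycle}.
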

\begin{proof} If $A\in\S^{4,2}$, then $|A_{\{i,j\}}|\ge0$ for $1\le i<j\le4$, hence
\begin{align*}
e_2(A)^2&=\left(|A_{\{1,2\}}|+|A_{\{3,4\}}|+|A_{\{1,3\}}|+|A_{\{2,4\}}|+|A_{\{1,4\}}|+|A_{\{2,3\}}|\right)^2\\
 &\ge \left(|A_{\{1,2\}}|+|A_{\{3,4\}}|\right)^2+\left(|A_{\{1,3\}}|+|A_{\{2,4\}}|\right)^2+\left(|A_{\{1,4\}}|+|A_{\{2,3\}}|\right)^2\\
 &\ge 4|A_{\{1,2\}}||A_{\{3,4\}}|+4|A_{\{1,3\}}||A_{\{2,4\}}|+4|A_{\{1,4\}}||A_{\{2,3\}}|.
\end{align*}
The first inequality follows since each $2\times 2$ principal minor is nonnegative, and the second inequality follows from the AM-GM inequality. The conclusion follows from Proposition \ref{proposition: 2-local Fischer}.
\end{proof}

\begin{remark}\label{rem: 4-cycle}
    Let 
    \[
    u=|A_{\{1,2\}}|+|A_{\{3,4\}}|, \quad v=|A_{\{1,3\}}|+|A_{\{2,4\}}|, \quad w=|A_{\{1,4\}}|+|A_{\{2,3\}}|.
    \]For equality to hold in Proposition \ref{prop: e_2^2>=4det} we need 
    \begin{align*}
    (u+v+w)^2=u^2+v^2+w^2 \quad\Leftrightarrow \quad uv+vw+wu=0,
    \end{align*}
     and this is equivalent to at least two of $u,v,w$ vanishing (since $u,v,w$ are all nonnegative). Say $u=w=0$; this is equivalent to $|A_{\{1,2\}}|=|A_{\{2,3\}}|=|A_{\{3,4\}}|=|A_{\{1,4\}}|=0$. Also observe that, since $(|A_{\{1,3\}}|+|A_{\{2,4\}}|)^2=4|A_{\{1,3\}}||A_{\{2,4\}}|$, we must have $|A_{\{1,3\}}|=|A_{\{2,4\}}|$.
\end{remark}

We use Remark \ref{rem: 4-cycle} to determine the boundary of $\lambda(\mathcal{S}^{4,2})$. The set of matrices
\begin{align}\label{eq: A(x)}
A(x)=\begin{bmatrix}
    1 & 1 & x & -1\\
    1 & 1 & 1 & x\\
    x & 1 & 1 & 1\\
    -1 & x & 1 & 1
    \end{bmatrix}
\end{align}
with $x\in[-1,1]$ is $2$-locally psd and satisfies equality in Proposition \ref{prop: e_2^2>=4det}. Moreover, a diagonal scaling of $A(x)$ produces a matrix in $\S^{4, 2}$ whose symmetric spectrum $\lambda$ satisfies $e_2(\lambda)^2 = 4e_4(\lambda)$. In the proof of Theorem \ref{theorem: l(S42)}, we will show that in fact every point on this hypersurface with two negative coordinates is a symmetric spectrum of $A(x)$ for some $x\in[-1,1]$. We will need the following two auxiliary results.

\begin{lemma}\label{lemma:positive dominates negative}
If $\lambda = (a, b, s, t)$ satisfies $a, b \geq 0$, $s, t < 0$, $e_1(\lambda) \geq 0$, and $e_2(\lambda) \geq 0$, then $a, b > 0$, $a + s > 0$, and $b + t > 0$.
\end{lemma}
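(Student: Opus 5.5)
The plan is a direct elementary argument: rewrite $e_2(\lambda)$ so that the quantities $a+s$ and $b+t$ appear explicitly, argue by contradiction on $a+s>0$, and get $b+t>0$ by symmetry and $a,b>0$ as immediate consequences. The key identity is
\[
e_2(\lambda)=ab+as+at+bs+bt+st=(a+s)(b+t)+as+bt,
\]
which follows from expanding $(a+s)(b+t)=ab+at+bs+st$. Since $a,b\ge 0$ and $s,t<0$, both $as\le 0$ and $bt\le 0$.

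First I show $a+s>0$. Suppose instead that $a+s\le 0$.
\begin{enumerate}[(i)]
\item From $e_1(\lambda)=(a+s)+(b+t)\ge 0$ we get $b+t\ge -(a+s)\ge 0$.
\item Hence $(a+s)(b+t)\le 0$, and the identity gives $e_2(\lambda)\le as+bt\le 0$.
\item Since $e_2(\lambda)\ge 0$ by hypothesis, equality must hold throughout, so in particular $bt=0$.
\item But $b\ge -t-(a+s)\ge -t>0$ and $t<0$, so $bt<0$. This is a contradiction.
\end{enumerate}
Therefore $a+s>0$, and then $a>-s>0$.

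The hypotheses and the identity are symmetric under the simultaneous swap $a\leftrightarrow b$, $s\leftrightarrow t$. The same argument therefore gives $b+t>0$, and hence $b>-t>0$.

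There is no real obstacle here. The only point needing care is the equality case in step (iii), which rules out $e_2(\lambda)=0$ with $a+s\le 0$; it is handled by the strict inequality $bt<0$, which uses the strict negativity of $t$.
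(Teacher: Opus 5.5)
Your proof is correct and is essentially the paper's argument: both rest on the identity $e_2(\lambda)=(a+s)(b+t)+as+bt$ with $as,bt\le 0$. The paper argues directly, showing that $a+s$ and $b+t$ have nonnegative sum and product and then getting strictness from $as+bt<0$, whereas you argue by contradiction and use the strict sign $bt<0$; this is only a cosmetic difference.
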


\begin{proof}
    We will show that the sum and product of $a + s$ and $b + t$ are each nonnegative. Indeed,
    \[
    (a + s) + (b + t) = e_1(\lambda)\geq 0,
    \]
    and since $as + bt \leq 0$, we have
    \[
    (a + s)(b + t) \geq (a + s)(b + t) + as + bt = e_2(\lambda) \geq 0,
    \]
    as claimed. It follows that $a + s$ and $b + t$ are both nonnegative, and thus $a \geq -s > 0$ and $b \geq -t > 0$. This also means $as + bt < 0$, so the product inequality is actually strict, and hence both $a + s$ and $b + t$ are positive.
\end{proof}

\begin{lemma}\label{lemma: e2-e4 boundary}
    Assume $\lambda = (a, b, s, t)$ satisfies $a, b > 0$, $s, t < 0$, $e_1(\lambda) \geq 0$, $e_2(\lambda) \geq 0$, and $e_2(\lambda)^2 \geq 4e_4(\lambda)$. For all $\tau \geq 0$, define
    \[
    a_\tau = a - \tau, \; \; b_\tau = b - \tau, \; \; s_\tau = s - \tau, \; \; t_\tau = t - \tau, \; \; \lambda_\tau = (a_\tau, b_\tau, s_\tau, t_\tau).
    \]
    Then there is $0 \leq \tau \leq \frac12\min\{a + s, b + t\}$ such that $a_\tau, b_\tau > 0$, $s_\tau, t_\tau < 0$, $e_1(\lambda_\tau) \geq 0$, $e_2(\lambda_\tau) \geq 0$, and $e_2(\lambda_\tau)^2 = 4e_4(\lambda_\tau)$.
\end{lemma}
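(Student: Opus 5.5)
Shifting all four coordinates by $-\tau$ is the same as moving along the direction $-\mathbf{1}$. We evaluate everything as a polynomial in $\tau$ and apply the intermediate value theorem to
\[
f(\tau):=e_2(\lambda_\tau)^2-4e_4(\lambda_\tau)
\]
on the interval $[0,\tau_0]$, where $\tau_0:=\frac12\min\{a+s,b+t\}$. By Lemma \ref{lemma:positive dominates negative}, $a+s>0$ and $b+t>0$, so $\tau_0>0$. By hypothesis $f(0)\ge 0$. If $f(0)=0$, take $\tau=0$ and we are done. So the substance lies in showing $f(\tau_0)\le 0$, and in checking that the side conditions hold for every $\tau\in[0,\tau_0]$.

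\textbf{Side conditions.} Assume without loss of generality that $a+s\le b+t$, so $\tau_0=\frac{a+s}{2}$.
\begin{itemize}
\item Signs: $s_\tau,t_\tau<0$ is automatic. Also $a_\tau\ge a-\frac{a+s}{2}=\frac{a-s}{2}>0$, and $b_\tau\ge b-\frac{b+t}{2}>0$.
\item First symmetric function: $e_1(\lambda_\tau)=e_1(\lambda)-4\tau\ge (a+s)+(b+t)-2(a+s)\ge0$.
\item Second symmetric function: $e_2(\lambda_\tau)=(a_\tau+s_\tau)(b_\tau+t_\tau)+a_\tau s_\tau+b_\tau t_\tau$, as in Lemma \ref{lemma:positive dominates negative}. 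The cleanest route is to note that $e_2(\lambda-\tau\mathbf 1)=e_2(\lambda)-3\tau e_1(\lambda)+6\tau^2$, which is convex in $\tau$. It is nonnegative at $\tau=0$, so it suffices to check it at $\tau_0$ and locate the vertex.
\item Fallback for $e_2$: if this becomes messy, first compute at $\tau_0$ and show $f(\tau_0)<0$. Then let $\tau^*$ be the smallest zero of $f$ in $[0,\tau_0]$, so $f\ge0$ on $[0,\tau^*]$. The side conditions are then only needed up to $\tau^*$. Writing $e_2^2\ge 4e_4>0$ there (note $e_4>0$, since two coordinates are positive and two negative), $e_2$ never vanishes on $[0,\tau^*]$. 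By continuity from $e_2(0)\ge0$ it stays positive, except possibly at $\tau=0$ when $e_2(0)=0=f(0)$, a case already handled. This continuity trick seems the most robust choice, and I would adopt it.
\end{itemize}

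\textbf{Key computation: $f(\tau_0)<0$.} At $\tau=\tau_0$ we have $a_\tau+s_\tau=0$, so $s_\tau=-a_\tau$. Then
\[
e_2(\lambda_\tau)=(a_\tau+s_\tau)(b_\tau+t_\tau)+a_\tau s_\tau+b_\tau t_\tau=-a_\tau^2+b_\tau t_\tau<0,
\]
while $e_4(\lambda_\tau)=a_\tau s_\tau b_\tau t_\tau=a_\tau^2 b_\tau|t_\tau|>0$. Set $x=a_\tau^2$ and $y=b_\tau|t_\tau|$. Then
\[
f(\tau_0)=(x+y)^2-4xy=(x-y)^2\ge 0,
\]
which is the wrong sign. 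So $f(\tau_0)\le0$ is false in general, and the plan must be adjusted.

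\textbf{Adjusted argument.} The point is that at $\tau_0$ we have $e_2<0$, whereas $e_2(0)\ge0$. By continuity there is a first $\tau_1\in[0,\tau_0)$ with $e_2(\lambda_{\tau_1})=0$. At $\tau_1$,
\[
f(\tau_1)=-4e_4(\lambda_{\tau_1})<0,
\]
because $e_4>0$ (two positive and two negative coordinates, as checked above). Now apply the intermediate value theorem to $f$ on $[0,\tau_1]$: $f(0)\ge0>f(\tau_1)$, so there is $\tau\in[0,\tau_1)$ with $f(\tau)=0$. On $[0,\tau_1]$ we have $e_2\ge0$ by the choice of $\tau_1$. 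The conditions $e_1\ge0$ and the sign conditions hold on all of $[0,\tau_0]$ by the first bullet list. The main obstacle was exactly to identify this correct endpoint, namely the first zero of $e_2$ rather than $\tau_0$ itself; with that choice the proof is short.
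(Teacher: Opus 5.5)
Your adjusted argument is correct once you strip out the abandoned plan to prove $f(\tau_0)\le 0$ and the fallback bullet; it follows the paper's route of moving along $\lambda-\tau\mathbf{1}$ and applying the intermediate value theorem, with the side conditions checked exactly as you check them. The only difference is the auxiliary function: the paper uses $e_2(\lambda_\tau)-2\sqrt{e_4(\lambda_\tau)}=(a_\tau+s_\tau)(b_\tau+t_\tau)-(\sqrt{-a_\tau s_\tau}+\sqrt{-b_\tau t_\tau})^2$, which keeps the sign of $e_2$ and is visibly $\le 0$ at $\tau_*=\frac12\min\{a+s,b+t\}$, whereas squaring loses that sign (as your computation $f(\tau_0)=(x-y)^2$ shows), so you have to stop at the first zero $\tau_1$ of $e_2$ instead.
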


\begin{proof}
    For $\tau \geq 0$, define 
    \[
    f(\tau) = e_2(\lambda_\tau) - 2\sqrt{e_4(\lambda_\tau)}.
    \]
    We have \(f(0) \geq 0\) by assumption. Also, let
    \[
    \tau_* =  \frac12\min\{a + s, b + t\}.
    \]
    By Lemma \ref{lemma:positive dominates negative}, this number is positive. Moreover, $a_{\tau_*}$ and $b_{\tau_*}$ are both positive, and $s_{\tau_*}$ and $t_{\tau_*}$ are both negative. Finally, one of 
    $a_{\tau_*} + s_{\tau_*}$ and $b_{\tau_*} + t_{\tau_*}$
    is zero, and the other is nonnegative. Thus, by using the identity
    \begin{align}
    f(\tau) &= (a_\tau + s_\tau)(b_\tau + t_\tau) + a_\tau s_\tau + b_\tau t_\tau - 2\sqrt{a_\tau b_\tau s_\tau t_\tau} \nonumber \\
    &= (a_\tau + s_\tau)(b_\tau + t_\tau) - (\sqrt{-a_\tau s_\tau} + \sqrt{-b_\tau t_\tau})^2, \label{eq: e2e4}
    \end{align}
    we see that $f(\tau_*) \leq 0$. Since $f$ is continuous on $[0, \tau_*]$, the Intermediate Value Theorem guarantees the existence of $\tau \in [0, \tau_*]$ such that $f(\tau) = 0$. For this $\tau$, we also have $a_\tau, b_\tau > 0$, $s_\tau, t_\tau < 0$, and
    \[e_1(\lambda_\tau) = (a_\tau + s_\tau) + (b_\tau + t_\tau) \geq 0, \quad e_2(\lambda_\tau) = 2\sqrt{e_4(\lambda_\tau)} \geq 0.\]
\end{proof}

In the following Theorem and Corollary we give semialgebraic descriptions of $\lambda(\S^{4,2})$ and its slice with the affine subspace $\{(1,1,s,t) \, \suchthat \, s,t \in \RR\}$ (see also Figure \ref{fig:spectra} and \ref{fig:boundary42}).

\begin{theorem}\label{theorem: l(S42)}
    $\lambda(\mathcal{S}^{4,2})$ is the basic semialgebraic set given by the $\lambda\in\RR^4$ satisfying the inequalities 
    \begin{align*}
        e_1(\lambda)\ge0,\quad
        e_2(\lambda)\ge0,\quad
        e_2(\lambda)^2\ge4e_4(\lambda).
    \end{align*}
\end{theorem}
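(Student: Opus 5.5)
The inclusion ``$\subseteq$'' is immediate from what we already have. If $A\in\S^{4,2}$ has spectrum $\lambda$, then by \eqref{eq: det(A) in eigenvalues} we get $e_1(\lambda)=\tr A\ge 0$ and $e_2(\lambda)=e_2(A)\ge0$, since these are sums of nonnegative principal minors. Moreover $e_4(\lambda)=\det A$, so Proposition \ref{prop: e_2^2>=4det} gives $e_2(\lambda)^2\ge 4e_4(\lambda)$.

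For ``$\supseteq$'' I take $\lambda$ satisfying the three inequalities. By the Descartes argument in the preliminaries, $\lambda$ has at most two negative entries, and I split into cases by that number. With no negative entries, $\lambda$ is the spectrum of a diagonal psd matrix. The basic tool for the remaining cases is the shift $\lambda\mapsto\lambda+\tau\mathbf 1$ with $\tau\ge0$: it corresponds to $A\mapsto A+\tau I$, which preserves $\S^{4,2}$. It therefore suffices to realize a suitable ``boundary'' point $\lambda-\tau\mathbf 1$ for some $\tau\ge0$.

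\emph{Two negative entries.} Write $\lambda=(a,b,s,t)$ with $s,t<0$. Lemma \ref{lemma:positive dominates negative} gives $a,b>0$. Lemma \ref{lemma: e2-e4 boundary} then produces $\tau\ge0$ such that $\lambda_\tau=\lambda-\tau\mathbf 1$ still has exactly two negative entries, satisfies $e_1,e_2\ge0$, and satisfies $e_2(\lambda_\tau)^2=4e_4(\lambda_\tau)$. So it suffices to show that every point $\mu$ of the hypersurface $e_2^2=4e_4$ with two positive and two negative coordinates (and $e_1,e_2\ge0$) is the spectrum of a diagonal scaling $DA(x)D$ with $x\in[-1,1]$. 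Here $A(x)$ is the matrix in \eqref{eq: A(x)}, and it and all its scalings lie in $\S^{4,2}$. Identity \eqref{eq: e2e4} suggests parametrizing such $\mu$ by $a,b>0$, $s,t<0$ with
\[(a+s)(b+t)=(\sqrt{-as}+\sqrt{-bt})^2.\]
Up to positive scaling this is a $2$-parameter family. The family $DA(x)D$ also has $2$ parameters up to overall scale: $D$ has $4$ entries, one is removed by scale and one is taken up by $x$. By Remark \ref{rem: 4-cycle}, every such matrix satisfies $e_2^2=4\det$, so these spectra land on the hypersurface.

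I would try to compute the characteristic polynomial of $DA(x)D$ explicitly. Using the $4$-cycle structure (all cycle minors vanish and $|A_{\{1,3\}}|=|A_{\{2,4\}}|$ after an appropriate choice of $D$), I expect it to factor as a product of two quadratics, or at least to exhibit $e_1,e_2,e_3$ as explicit functions of the parameters. Matching these with the coefficients of $\prod(x-\mu_i)$, and solving for $D$ and $x$, is the main obstacle. If explicit inversion fails, I would fall back on a topological argument: show that the map (parameters) $\to$ (normalized spectrum) is continuous and proper onto the relevant connected $2$-dimensional region of the hypersurface, and that it has the correct behavior on the boundary of the parameter domain ($x=\pm1$, or a scaling entry tending to $0$). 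A degree or connectedness argument then gives surjectivity.

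\emph{One negative entry.} Say $t<0\le a,b,c$. Then $e_4(\lambda)\le 0$, so the third inequality is automatic. Consider $g(\tau)=e_2(\lambda-\tau\mathbf 1)$. I would shift until $e_2=0$, and check, as in Lemma \ref{lemma: e2-e4 boundary}, that at this $\tau$ we still have $e_1\ge0$ and at most one negative coordinate. If a second coordinate would become negative first, I fall back to the two-negative case. It then remains to realize the boundary points with one negative coordinate and $e_2=0$. For these I use $DG(4,2)D$ with $G(4,2)=2I-\mathbf 1\mathbf 1^\top$. All its $2\times2$ principal minors vanish, so its spectrum lies on $e_2=0$, and it has signature $(3,1)$. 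The $3$ free scaling parameters (modulo scale) should sweep out the $3$-dimensional boundary piece. I would verify this by the same coefficient matching; the relevant coefficients $e_3$ and $e_4$ are simple in the $d_i^2$, being multiples of $e_3(d^2)$ and $e_4(d^2)$ of the squared scaling entries.
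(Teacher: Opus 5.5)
Your ``$\subseteq$'' direction, and your reduction in the two-negative case to points on the hypersurface $e_2^2=4e_4$ (via Lemmas \ref{lemma:positive dominates negative} and \ref{lemma: e2-e4 boundary} and the shift $A\mapsto A+\tau I$), match the paper.

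The genuine gap is the step you yourself flag as the main obstacle. You never show that every $(a,b,s,t)$ with $a,b>0>s,t$, $e_1,e_2\ge0$ and $e_2^2=4e_4$ is the spectrum of some $DA(x)D$. That realization is the whole content of ``$\supseteq$''. ``I expect it to factor'' does not supply it. Neither does the topological fallback, which would need properness, control at $x=\pm1$ and at degenerate $D$, and an actual degree computation.

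The missing idea is to respect the symmetry of $A(x)$ under the simultaneous swap $1\leftrightarrow4$, $2\leftrightarrow3$, by taking $D=\diag(\alpha,\beta,\beta,\alpha)$. Then $W_+=\{(u,v,v,u)\}$ and $W_-=\{(u,v,-v,-u)\}$ are invariant under $M=DA(x)D$. In the bases $\{(1,0,0,\pm1),(0,1,\pm1,0)\}$, $M$ acts on them by
\[
\begin{bmatrix}0&(1+x)\alpha\beta\\(1+x)\alpha\beta&2\beta^2\end{bmatrix}
\quad\text{and}\quad
\begin{bmatrix}2\alpha^2&(1-x)\alpha\beta\\(1-x)\alpha\beta&0\end{bmatrix}.
\]
To get characteristic polynomials $(\lambda-b)(\lambda-t)$ and $(\lambda-a)(\lambda-s)$, it suffices that
\[
2\beta^2=b+t,\quad 2\alpha^2=a+s,\quad (1+x)\alpha\beta=\sqrt{-bt},\quad (1-x)\alpha\beta=\sqrt{-as}.
\]
Adding the last two and squaring gives $(a+s)(b+t)=(\sqrt{-as}+\sqrt{-bt})^2$, which is exactly the hypersurface equation in the form \eqref{eq: e2e4}. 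Solving then gives $x=\frac{\sqrt{-bt}-\sqrt{-as}}{\sqrt{-as}+\sqrt{-bt}}\in(-1,1)$, and $a+s,\,b+t>0$ by Lemma \ref{lemma:positive dominates negative}, so $\alpha,\beta$ are real and positive. The inversion is therefore explicit and no topology is needed.

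Two smaller points. First, your assertion that every scaling $DA(x)D$ lands on the hypersurface is false. One computes
\[
e_2(DA(x)D)^2-4\det(DA(x)D)=(1-x^2)^2\bigl(d_1^2d_3^2-d_2^2d_4^2\bigr)^2,
\]
so equality requires $|d_1d_3|=|d_2d_4|$. This does not hurt the inclusion you need, and $\diag(\alpha,\beta,\beta,\alpha)$ satisfies the constraint automatically, but your dimension count built on it is off. Second, in the one-negative case, the claim that the scalings $DG(4,2)D$ sweep out the part of $\{e_2=0\}$ with one negative coordinate is likewise only asserted, not checked. The paper simply invokes \cite[Corollary~1.5]{kozhasov2023eigenvalues} to place every point of $H(e_2)$ with at most one negative coordinate in $\lambda(\S^{4,2})$. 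You should either cite that or actually carry out the surjectivity argument.
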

\begin{proof}
    Assume that $\lambda \in \lambda(\S^{4, 2})$. Since $\lambda(\S^{4, 2}) \subseteq H(e_2)$, the inequalities $e_1(\lambda)\ge0$ and $e_2(\lambda)\ge0$ are satisfied. If $A \in \S^{4, 2}$ has symmetric spectrum $\lambda$, then by Proposition \ref{prop: e_2^2>=4det}, $e_2(\lambda)^2\ge4e_4(\lambda)$ is also satisfied, since $e_2(\lambda)=e_2(A)$ and $e_4(\lambda)=\det(A)$ by \eqref{eq: det(A) in eigenvalues}. 
    
    We now show that the inequalities are sufficient. Assume that $\lambda$ satisfies the given inequalities. If $\lambda$ has at most one negative coordinate, then by \cite[Corollary~1.5]{kozhasov2023eigenvalues} we know that $\lambda \in \lambda(\S^{4, 2})$. Since three negative coordinates are not possible, we may assume $\lambda$ has exactly two negative coordinates.
    
    Put $\lambda = (a, b, s, t)$, where $a, b \geq 0$ and $s, t < 0$. By Lemma \ref{lemma:positive dominates negative}, we obtain that $a, b > 0$, $a + s > 0$, and  $b + t > 0$. By Lemma \ref{lemma: e2-e4 boundary}, we may push $\lambda$ in the direction of $(-1, -1, -1, -1)$ until the vector lies on the hypersurface $e_2(\lambda)^2 = 4e_4(\lambda)$ while preserving the signs of the coordinates. So by the star-shapedness of $\lambda(\S^{4, 2})$ with respect to the nonnegative orthant, it suffices to prove inclusion of the hypersurface $e_2(\lambda)^2 = 4e_4(\lambda)$ intersected with the set of points with sign pattern $(+,+,-,-)$ in $\lambda(\S^{4, 2})$.
    
    Indeed, assume $\lambda = (a, b, s, t)$ satisfies $a,  b > 0$, $s, t < 0$, $e_1(\lambda) \geq 0$, $e_2(\lambda) \geq 0$, and $e_2(\lambda)^2 = 4e_4(\lambda)$. Again, we obtain that $a + s > 0$ and $b + t > 0$, and by \eqref{eq: e2e4} we have $
    (a + s)(b + t) = (\sqrt{-as} + \sqrt{-bt})^2$.
    Now let
    \[
    \alpha = \sqrt{\frac{a + s}2}, \quad \beta  = \sqrt{\frac{b + t}2}, \quad x = \frac{\sqrt{-bt} - \sqrt{-as}}{\sqrt{-as} + \sqrt{-bt}},
    \]
    where we have that $x \in (-1, 1)$. Also let
    \[
    D = \diag\{\alpha, \beta, \beta, \alpha\}, \quad M = DA(x)D.
    \]
    We see that $M$ is 2-locally psd. We claim that $M$ has the desired spectrum $\lambda$.

    We observe that the orthogonal subspaces
    \[
    W_+ := \{(u, v, v, u) : u, v \in \RR\}, \quad W_- := \{(u, v, -v, -u) : u, v \in \RR\}
    \]
    are invariant under both $D$ and $A(x)$, and hence also under $M$. On $W_+$, the matrix representation of $M$ over the ordered basis $\{(1, 0, 0, 1), (0, 1, 1, 0)\}$ can be computed as
    \[
    \begin{bmatrix}
        0 & (1 + x)\alpha\beta \\
        (1 + x)\alpha\beta & 2\beta^2
    \end{bmatrix},
    \]
    which has characteristic polynomial
    \begin{align*}
    &\lambda^2 - 2\beta^2\lambda - (1 + x)^2\alpha^2\beta^2 \\
    &= \lambda^2 - (b + t)\lambda - \frac{-4bt}{(\sqrt{-as} + \sqrt{-bt})^2} \cdot \frac{a + s}2 \cdot \frac{b + t}2 \\
    &= \lambda^2 - (b + t)\lambda + bt = (\lambda - b)(\lambda - t).
    \end{align*}
    Likewise, the matrix of $M$ on $W_-$ over the ordered basis $\{(1, 0, 0, -1), (0, 1, -1, 0)\}$ is
    \[
    \begin{bmatrix}
        2\alpha^2 & (1 - x)\alpha\beta \\
        (1 - x)\alpha\beta & 0
    \end{bmatrix},
    \]
    and a similar computation yields the characteristic polynomial $(\lambda - a)(\lambda - s)$.
    Thus, we recover the eigenvalues of $M$ as $a, b, s, t$.
\end{proof}

\begin{figure}[htbp]
  \centering
  \begin{subfigure}[t]{0.31\textwidth}
    \centering
    \includegraphics[height=30mm]{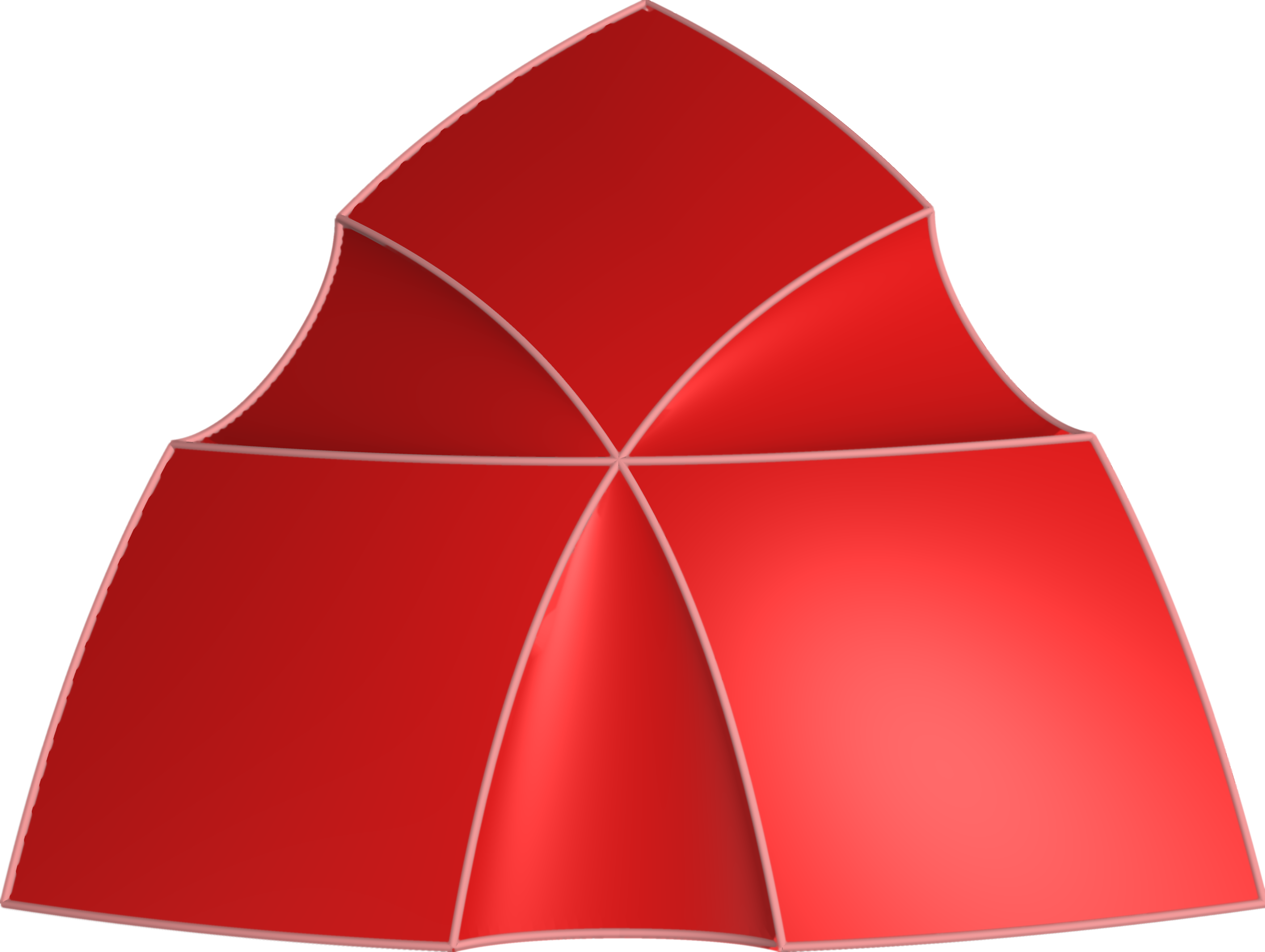}
    \caption{Projection of $\lambda(\mathcal{S}^{4,2}) \cap \{\lambda \mid
      1 = \lambda_1 \ge \lambda_2 \ge \lambda_3 \ge \lambda_4\}$
      onto the last three eigenvalues.}
    \label{fig:spec-A}
  \end{subfigure}
  \hfill
  \begin{subfigure}[t]{0.31\textwidth}
    \centering
    \includegraphics[height=30mm]{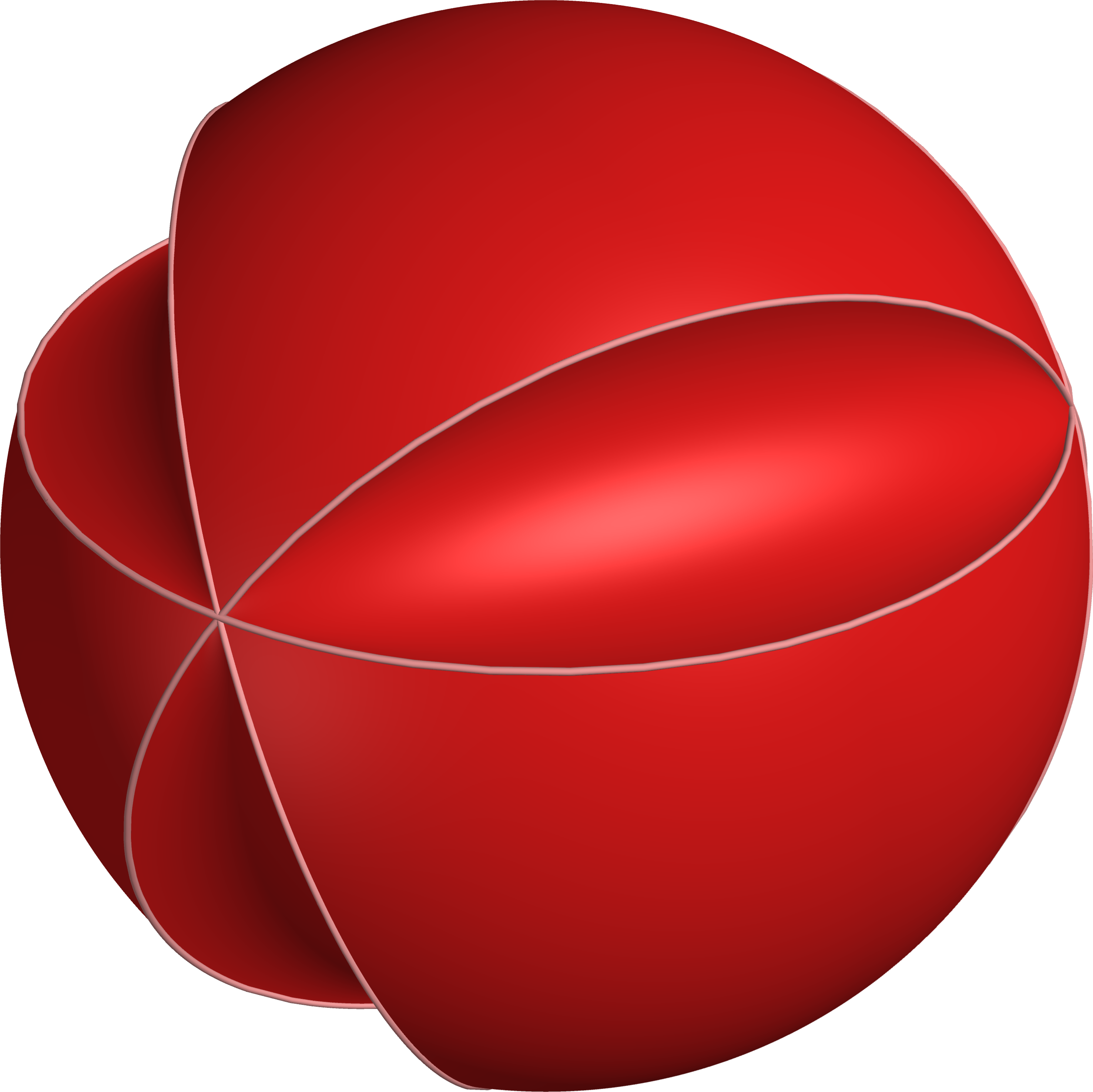}
    \caption{$\{\lambda \in \lambda(\mathcal{S}^{4,2}) \mid
      \sum_{i=1}^4 \lambda_i = 1\}$, drawn in the
      hyperplane $\sum_i \lambda_i = 1$.}
    \label{fig:spec-B}
  \end{subfigure}
  \hfill
  \begin{subfigure}[t]{0.31\textwidth}
    \centering
    \includegraphics[height=30mm]{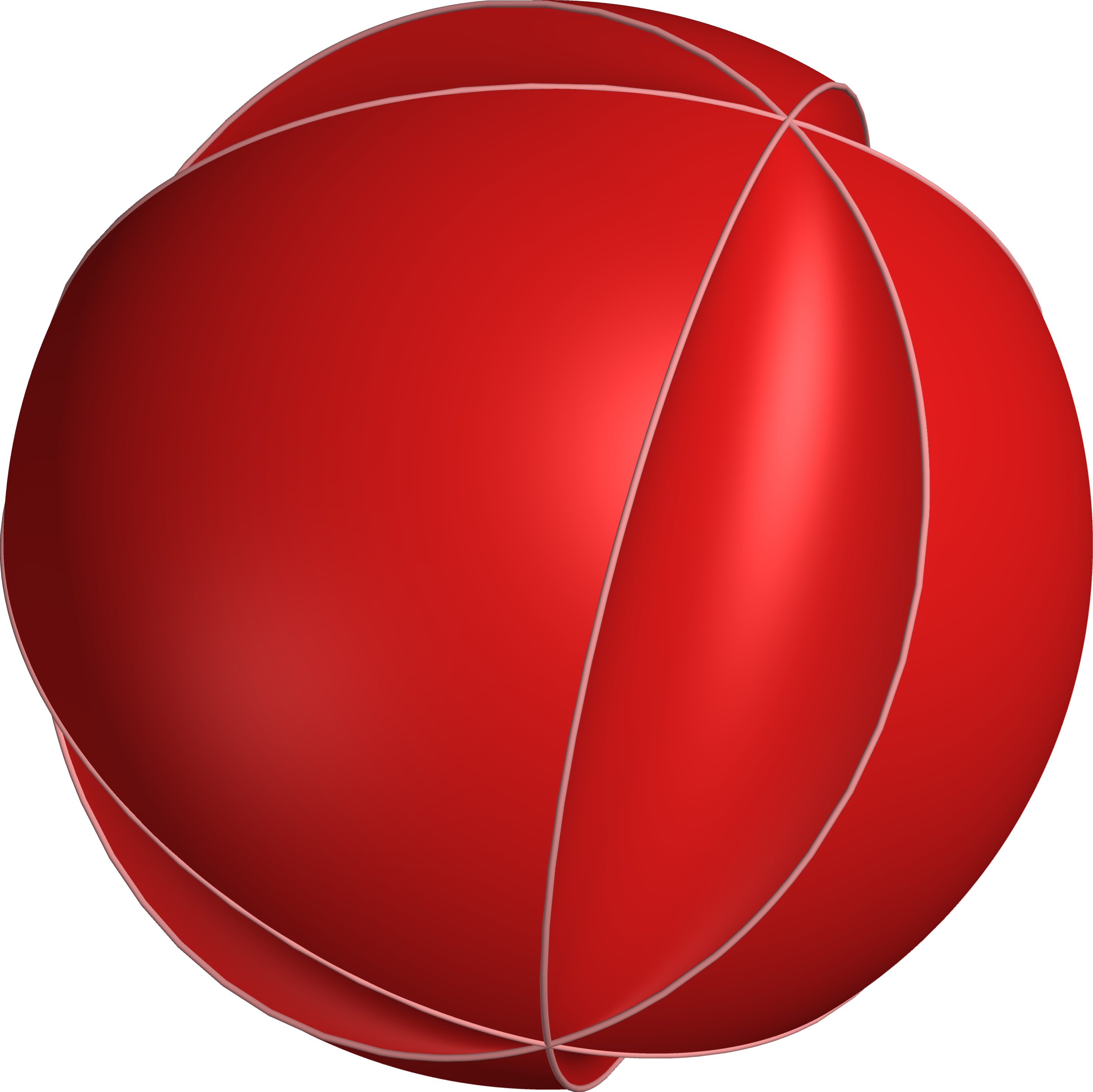}
    \caption{The section in (\subref{fig:spec-B}) seen from a
      different viewpoint.}
    \label{fig:spec-C}
  \end{subfigure}
  \caption{Sections of $\lambda(\mathcal{S}^{4,2})$.}
  \label{fig:spectra}
\end{figure}
\begin{figure}[h!]
\centering
\resizebox{4cm}{!}{
\includegraphics[width=0.2\textwidth]{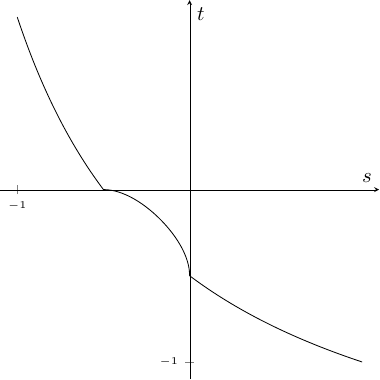}
}
\caption{Boundary of $\mathcal{L}_{4,2}$.}
  \label{fig:boundary42}
\end{figure}

The convex parts of the boundary in Figure \ref{fig:spec-B} and \ref{fig:spec-C} correspond to the boundary of the unit ball. The four singular points are vertices of a regular tetrahedron inscribed in the ball. The curves on the sphere joining the singular points come from intersecting the supporting hyperplanes of the regular tetrahedron with the sphere. An interactive orthogonal projection of this slice onto a coordinate hyperplane can be found in \url{https://jgacevedoh.github.io/animations/eigen2lpsd4.html}.

The following description of $\mathcal{L}_{4,2}$ follows directly from Theorem \ref{theorem: l(S42)} by evaluating at $(1,1,s,t)$.

\begin{corollary}\label{cor: L42}
    $\mathcal{L}_{4,2}$ is the basic semialgebraic set given by the inequalities
    \begin{align*}
        2 + s+t\ge 0,\quad
        1+2s+2t+st\ge0,\quad
        (1+2s+2t+st)^2\ge4st.
    \end{align*}
\end{corollary}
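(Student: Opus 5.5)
The corollary is a direct specialization of Theorem \ref{theorem: l(S42)}. By definition, $(s,t)\in\mathcal{L}_{4,2}$ if and only if $\lambda=(1,1,s,t)\in\lambda(\S^{4,2})$. By Theorem \ref{theorem: l(S42)}, this holds if and only if $\lambda$ satisfies the three inequalities $e_1(\lambda)\ge0$, $e_2(\lambda)\ge0$ and $e_2(\lambda)^2\ge4e_4(\lambda)$. So the plan is simply to evaluate the elementary symmetric polynomials at $(1,1,s,t)$ and substitute.

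The computations are as follows.
\[
e_1(1,1,s,t)=2+s+t.
\]
The pairwise products are $1\cdot1$, $1\cdot s$ (twice), $1\cdot t$ (twice) and $st$, so
\[
e_2(1,1,s,t)=1+2s+2t+st.
\]
Finally, $e_4(1,1,s,t)=st$.

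Substituting these into the three inequalities of Theorem \ref{theorem: l(S42)} gives exactly
\[
2+s+t\ge0,\qquad 1+2s+2t+st\ge0,\qquad (1+2s+2t+st)^2\ge4st.
\]
Both inclusions follow at once, since Theorem \ref{theorem: l(S42)} is an equivalence. The set is basic semialgebraic because it is defined by finitely many polynomial inequalities in $(s,t)$.

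The only point needing care is that $\mathcal{L}_{4,2}$ is defined via the symmetric (unordered) spectrum. This causes no issue, because the description in Theorem \ref{theorem: l(S42)} uses only symmetric functions of $\lambda$ and is therefore invariant under permutations of the coordinates.
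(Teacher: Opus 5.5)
Your proposal is correct and matches the paper's argument: the paper also obtains the corollary directly from Theorem \ref{theorem: l(S42)} by evaluating at $(1,1,s,t)$. Your values $e_1=2+s+t$, $e_2=1+2s+2t+st$ and $e_4=st$ are right.
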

\begin{corollary}\label{cor:ordered spectrum}
The sets $\mathcal{L}_{4, 2}$ and $\lambda(\S^{4,2})$ are not convex.
\end{corollary}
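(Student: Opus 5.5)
The plan is to exhibit an explicit pair of points in $\mathcal{L}_{4,2}$ whose midpoint violates one of the inequalities of Corollary \ref{cor: L42}, and then to transfer this to $\lambda(\S^{4,2})$ by slicing. Following the example of \cite{kozhasov2023eigenvalues}, I would take the two points $(s,t)=(0,-\tfrac12)$ and $(s,t)=(-\tfrac12,0)$. Their midpoint is $(-\tfrac14,-\tfrac14)$.

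First I check that both endpoints lie in $\mathcal{L}_{4,2}$ using Corollary \ref{cor: L42}. At $(0,-\tfrac12)$:
\begin{itemize}
\item $2+s+t=\tfrac32\ge 0$;
\item $1+2s+2t+st=1-1+0=0\ge0$;
\item $(1+2s+2t+st)^2-4st=0-0=0\ge 0$.
\end{itemize}
So all three inequalities hold. Since the defining inequalities are symmetric in $s,t$, the point $(-\tfrac12,0)$ also lies in $\mathcal{L}_{4,2}$.

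Next I evaluate the third inequality at the midpoint. There $1+2s+2t+st=1-\tfrac12-\tfrac12+\tfrac1{16}=\tfrac1{16}$, and
\[
\Big(\tfrac1{16}\Big)^2-4\cdot\tfrac1{16}=\tfrac1{256}-\tfrac14<0 .
\]
Hence $(-\tfrac14,-\tfrac14)\notin\mathcal{L}_{4,2}$, and $\mathcal{L}_{4,2}$ is not convex.

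For $\lambda(\S^{4,2})$, I use that $\mathcal{L}_{4,2}$ is, by definition, the preimage of $\lambda(\S^{4,2})$ under the affine map $(s,t)\mapsto(1,1,s,t)$. If $\lambda(\S^{4,2})$ were convex, this preimage would be convex as well. Concretely, $(1,1,0,-\tfrac12)$ and $(1,1,-\tfrac12,0)$ lie in $\lambda(\S^{4,2})$, while their midpoint $(1,1,-\tfrac14,-\tfrac14)$ does not, by Theorem \ref{theorem: l(S42)}.

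There is no real obstacle here, since all the work is contained in Theorem \ref{theorem: l(S42)} and Corollary \ref{cor: L42}. The only care needed is choosing the endpoints on the curve $1+2s+2t+st=0$. There $e_2=0$ and $e_4=0$, so both endpoints sit on the boundary of the set. Meanwhile the midpoint has $st>0$ but a small $e_2$, which makes the strict inequality in the third condition fail.
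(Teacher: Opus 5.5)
Your proposal is correct and matches the paper's (implicit) argument: the corollary is read off from the semialgebraic description in Corollary \ref{cor: L42}, and your endpoints $(0,-\tfrac12)$, $(-\tfrac12,0)$ are exactly Kozhasov's points (equivalently, the $n=4$ case of Corollary \ref{corollary: nonconvexity}), whose midpoint $(-\tfrac14,-\tfrac14)$ violates $e_2^2\ge 4e_4$. Your transfer to $\lambda(\S^{4,2})$ is also sound, since $\mathcal{L}_{4,2}$ is the preimage of $\lambda(\S^{4,2})$ under the affine map $(s,t)\mapsto(1,1,s,t)$.
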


\begin{remark}\label{rem:ordered spectrum non-convex}
It was asked in \cite[Section~2]{kozhasov2023eigenvalues} whether the \emph{ordered spectrum} $\lambda_{\leq}(\S^{n,d}) := \{ \lambda \in \lambda(\S^{n,d}) \, \suchthat \, \lambda_1 \leq  \dots  \leq\lambda_n\}$ is not convex for some $(n,d)$. Corollary \ref{cor: L42} implies that the ordered spectrum is indeed not convex for $(n, d) = (4, 2)$ as illustrated by Figure \ref{fig:boundary42}.
\end{remark}

\section{Extremal points in \texorpdfstring{$\mathcal{L}_{n, d}$}{ln,d} and non-convexity of \texorpdfstring{$\lambda(\mathcal{S}^{n,2})$}{lambda(Sn,2)} and \texorpdfstring{$\lambda(\mathcal{S}^{n,n-2})$}{lambda(Sn,n-2)}}\label{section: 2-locally psd-nonconvex}

We investigate sets $\mathcal{L}_{n,d}$ for $n \geq 4$ and $2 \leq d \leq n - 2$, and show that it is not convex in the extreme cases $d \in \{2, n - 2\}$. For these cases, we 
inspect the points $(s,t)$ that minimize the sum $s+t$ subject to $(s,t) \in \mathcal{L}_{n,d}$ and show that this minimum cannot be attained when $s = t$. Moreover, for $d = 2$ we prove the stronger statement that this minimum can be achieved only when $s = 0$ or $t = 0$, which allows us to describe in Theorem \ref{theorem: convex hull Ln2} the exact convex hull of $\mathcal{L}_{n,2}$. For $n = 4$, we already determined $\mathcal{L}_{4, 2}$ in Corollary \ref{cor: L42} and showed that it is not convex.

We exhibit points on the boundary of $\mathcal{L}_{n,d}$ by finding the optimal values (in some instances) of two minimization problems.

\begin{problem}\label{problem:min} For $1 \leq d \leq n$ we define the following two optimization problems which search the minimum of a linear function over $\mathcal{L}_{n,d}$.
    \begin{align*}
        \mu_{n,d}^{\operatorname{tr}} & :=   \min\left\{s+t\,\suchthat\, (1,\dots,1,s,t)\in\lambda(\mathcal{S}^{n,d})\right\}, \\
        \pi_{n,d}^{\operatorname{tr}} & :=   \min\left\{t\,\suchthat\, (1,\dots,1,t,t)\in\lambda(\mathcal{S}^{n,d})\right\}.        
    \end{align*}
\end{problem}
Below, in Proposition \ref{prop: equivalences}, we will see that these minima are indeed attained. The following lemma shows that we can restrict our search to the third quadrant.

\begin{lemma}\label{lem: s,t <= 0}
Minimizing $s+t$ over $(s,t) \in \mathcal{L}_{n,d}$ with $s,t\leq 0$ yields the same value as minimizing $s+t$ over the whole $\mathcal{L}_{n,d}$. 
\end{lemma}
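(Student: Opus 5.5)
The plan is to use the star-shapedness of $\lambda(\S^{n,d})$ with respect to $\RR^n_{\ge 0}$: if $\lambda\in\lambda(\S^{n,d})$ and $v\in\RR^n_{\ge0}$, then $\lambda+v\in\lambda(\S^{n,d})$. This holds because for $X\sim\diag(\lambda)$ we may write $X=Q\diag(\lambda)Q^\top$, and $X+Q\diag(v)Q^\top$ is a sum of a $d$-locally psd matrix and a psd matrix, hence again $d$-locally psd. Its spectrum is exactly $\lambda+v$. In terms of the slice, $(s,t)\in\mathcal L_{n,d}$ implies $(s',t')\in\mathcal L_{n,d}$ for all $s'\ge s$, $t'\ge t$.

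Take any $(s,t)\in\mathcal L_{n,d}$. I will produce a point $(s',t')\in\mathcal L_{n,d}$ with $s',t'\le 0$ and $s'+t'\le s+t$. There are three cases.

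\emph{Case $s,t\le0$.} Take $(s',t')=(s,t)$.

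\emph{Case $s,t>0$.} Take $(s',t')=(0,0)$, which lies in $\mathcal L_{n,d}$ since $(1,\dots,1,0,0)\in\RR^n_{\ge0}=\lambda(\S^{n,n})\subseteq\lambda(\S^{n,d})$. Then $s'+t'=0<s+t$.

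\emph{Case $s>0\ge t$ (the other mixed case is symmetric).} The vector $(1,\dots,1,s,t)$ has $n-2$ ones, and at least one of them can be traded with $s$. Since $\lambda(\S^{n,d})$ is invariant under permutations of coordinates, $(1,\dots,1,s,t)$ and $(1,\dots,1,s,1,t)$ describe the same point, with $s$ moved into an earlier slot. The issue is to land in the slice with a smaller last-two-coordinate sum. Replacing $s$ by $0$ directly is a move in the wrong direction for star-shapedness, so a different route is needed.

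If $s\ge1$, swap $s$ with a $1$ in one of the first $n-2$ slots and then lower that slot back to $1$. This lowering is again the wrong direction, so it also fails. Instead, use monotonicity the right way: start from a point we already know and increase. The simplest option is $(s',t')=(0,t)$. Then $s'+t'=t<s+t$ and $s',t'\le0$, so it suffices to show $(1,\dots,1,0,t)\in\lambda(\S^{n,d})$.

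This membership is the main obstacle. I would derive it from a known sufficient condition, namely \cite[Corollary~1.5]{kozhasov2023eigenvalues}, which says that vectors with at most one negative coordinate lying in $H(e_d)$ belong to $\lambda(\S^{n,d})$. This is the result used in the proof of Theorem~\ref{theorem: l(S42)}, and I assume it holds for general $n,d$. So I need $e_k(1,\dots,1,0,t)\ge0$ for $k\le d$. Here $e_k=\binom{n-2}{k}+t\binom{n-2}{k-1}$, which is nonnegative iff $t\ge -\frac{n-k-1}{k}$. The binding constraint is $k=d$.

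From $(s,t)\in\mathcal L_{n,d}\subseteq H(e_d)$ we have $e_k(1,\dots,1,s,t)\ge0$, and this does not directly force $t\ge-\frac{n-d-1}{d}$. So I would treat $s\in(0,1]$ and $s>1$ differently. For $s\le 1$, coordinatewise monotonicity of the hyperbolicity cone in the direction of the positive orthant gives $(1,\dots,1,1,t)\in H(e_d)$ only as a larger point, which again is the wrong direction.

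Since these comparisons keep running the wrong way, the fallback is a direct construction. Take a matrix $X\in\S^{n,d}$ with spectrum $(1,\dots,1,s,t)$, and let $u$ be a unit eigenvector for $s$. Subtracting $s\,uu^\top$ gives spectrum $(1,\dots,1,0,t)$, but this subtraction may destroy local psd-ness. To guarantee validity, I would instead use the $k=d$ inequality to check membership of $(1,\dots,1,0,t)$ in $H(e_d)$ and then apply the corollary. I expect this verification, that a negative $t$ admissible alongside a positive $s$ remains admissible alongside $0$, to be the crux.
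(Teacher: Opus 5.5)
There is a genuine gap in the mixed case $s>0\ge t$. The comparison point $(0,t)$ you settle on is not in $\mathcal{L}_{n,d}$ in general, so the ``crux'' you defer is actually false. A positive $s$ allows $t$ to drop below $-\frac{n-d-1}{d}$, and then $(1,\dots,1,0,t)\notin H(e_d)$. For example, take $n=4$, $d=2$ and $(s,t)=(1,-1)$. This point lies in $\mathcal{L}_{4,2}$: check Corollary~\ref{cor: L42}, or note that $(1,1,1,-1)$ has a single negative entry with $e_1=2$, $e_2=0$. But $e_2(1,1,0,-1)=-1<0$, so $(0,-1)\notin\mathcal{L}_{4,2}$. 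Any strategy that keeps $t$ fixed and only lowers $s$ runs into this, however you arrange the monotonicity arguments.

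The missing idea is to bound $s+t$ rather than $t$, and to compare against one fixed point rather than a point depending on $(s,t)$. For $d\le n-1$, write
\[ e_d(1,\dots,1,s,t)=\tbinom{n-2}{d}+(s+t)\tbinom{n-2}{d-1}+st\tbinom{n-2}{d-2}\ge 0. \]
In the mixed case $st\le 0$ and the last coefficient is nonnegative. Dividing by $\binom{n-2}{d-1}>0$ gives $s+t\ge-\frac{n-d-1}{d}$. This value is attained at $(0,-\frac{n-d-1}{d})$, which lies in the closed third quadrant and in $\mathcal{L}_{n,d}$. You can see membership either via the Kozhasov corollary you invoke (one checks $e_k(1,\dots,1,0,-\frac{n-d-1}{d})\ge0$ for all $k\le d$, with equality at $k=d$). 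Alternatively, as the paper does, realize it directly as the spectrum of $\frac{d-1}{d}G(n-1,d)\in\S^{n-1,d}$ padded by a zero row and column, for $d\ge2$. The remaining cases are immediate. For $d=1$, $\lambda(\S^{n,1})$ is the half-space $e_1\ge0$ and the same comparison point $(0,2-n)$ works. For $d=n$, $\mathcal{L}_{n,n}\subset\RR^2_{\ge0}$. Your upward-closedness argument and the case $s,t>0$ are correct, but they are not where the difficulty lies.
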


\begin{proof}
For $d=n$ this follows from the fact that $\mathcal{L}_{n,n} \subset \RR^2_{\geq 0}$. 
For $d=1$, \cite[Proposition~1.1]{kozhasov2023eigenvalues} tells us that $\lambda(\S^{n,1})=\left\{ \lambda \in \RR^n \, \suchthat \, \sum_{j=1}^n \lambda_j \geq 0 \right\}$, so $(s,t) \in \mathcal{L}_{n,1}$ if and only if $s + t \geq 2-n$. Therefore, $\mu_{n,1}^{\operatorname{tr}}$ is attained on the entire line through $(0,2-n)$ and $(2-n,0)$. 

We now assume $2 \leq d \leq n - 1$. Every
$(s,t) \in \mathcal{L}_{n,d}$ must satisfy $e_d(1,\dots,1,s,t) \ge 0$ since $\lambda(\mathcal{S}^{n,d}) \subseteq H(e_d)$.
If $s > 0$ and $t < 0$, then $st < 0$, and rearranging yields
\[
s + t \;\ge\; -\frac{n-d-1}{d} - \frac{d-1}{n-d}\, st
\;\ge\; -\frac{n-d-1}{d}\,;
\]
for $s,t \ge 0$ this bound holds trivially. The value
$-\frac{n-d-1}{d}$ is attained at
$\bigl(1,\dots,1,0,-\frac{n-d-1}{d}\bigr)$, which is in $\lambda(\S^{n, d})$ because it is the spectrum of $\frac{d-1}{d}\,G(n-1,d) \in \mathcal{S}^{n-1,d}$
extended by a zero entry. 
\end{proof}

A crucial step now is reformulating the optimization problems in Problem \ref{problem:min} as questions about optimal point configurations in the complex plane.

\begin{proposition}\label{prop: equivalences}
For $2 \leq d \leq n$, consider the following two optimization problems over $n$ complex numbers $z_1,\dots,z_n$:
    \begin{align*}
    \mu_{n,d}^{\max}  & :=   \min\left\{\max_{S\in\binom{[n]}d}\sum_{s\in S}|z_s|+\left|\sum_{s\in S} z_s\right|\,\suchthat\,z_1,\dots,z_n\in\CC,\, \sum_{k=1}^n|z_k|=1\right\}, \\
    \pi_{n,d}^{\max} & :=  
    \min\left\{\max_{S\in\binom{[n]}d}\sum_{s\in S}|z_s|+\left|\sum_{s\in S} z_s\right|\,\suchthat\,z_1,\dots,z_n\in\CC,\, \sum_{k=1}^n|z_k|=1,\, \sum_{k=1}^n z_k=0\right\}.    
    \end{align*} 
    The following relations hold between $\mu_{n,d}^{\tr}$ and $ \mu_{n,d}^{\max}$, as well as between $\pi_{n,d}^{\tr}$ and $ \pi_{n,d}^{\max}$: 
    \[ \mu_{n,d}^{\tr} = 2 \left( 1- \frac{1}{\mu_{n,d}^{\max}}\right) \text{ and } \pi_{n,d}^{\tr} = 1- \frac{1}{\pi_{n,d}^{\max}}.\]
    Moreover, for an optimal solution $(z_1,\dots,z_n)$ attaining $\mu_{n,d}^{\max}$ the corresponding pair $(s,t)$ of the optimization problem for $\mu_{n,d}^{\operatorname{tr}}$ satisfies $s=1-\frac{1}{\mu_{n,d}^{\max}}(1-|\sum_{j=1}^n z_j|)$ and $t=1-\frac{1}{\mu_{n,d}^{\max}}(1+|\sum_{j=1}^n z_j|)$, up to permutation of $s$ and $t$.
\end{proposition}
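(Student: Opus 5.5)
The plan is to linearize the problem through the rank-two perturbation $I-X$ and then encode rank-one real $2\times 2$ matrices by complex numbers. By Lemma \ref{lem: s,t <= 0}, for $\mu^{\tr}_{n,d}$ we may restrict to $s,t\le 0$. For $\pi^{\tr}_{n,d}$ the point $(1,\dots,1,0,0)$ is feasible, since it is the spectrum of a psd matrix, so the minimum satisfies $t\le 0$. Hence in both problems we can assume $s,t\le 1$. Let $X\in\S^{n,d}$ have spectrum $(1,\dots,1,s,t)$. Then $I-X\succeq 0$ has rank at most $2$, so $I-X=V^\top V$ for a real $2\times n$ matrix $V$ with columns $v_1,\dots,v_n\in\RR^2$. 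Conversely, every such $V$ gives $X=I-V^\top V$ with spectrum $(1,\dots,1,1-\sigma_1,1-\sigma_2)$, where $\sigma_1\ge\sigma_2$ are the eigenvalues of $VV^\top=\sum_k v_kv_k^\top$.

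Next we translate local positivity into a condition on the $v_k$. For $S\in\binom{[n]}{d}$ we have $X_S=I_d-V_S^\top V_S$. This is psd if and only if $\lambda_{\max}(V_S^\top V_S)\le 1$, which is the same as $\lambda_{\max}\bigl(\sum_{k\in S}v_kv_k^\top\bigr)\le 1$. Identify $v_k$ with $u_k\in\CC$ and write $z_k=u_k^2$. A direct computation gives
\[
v_kv_k^\top=\tfrac12\bigl(|z_k|I+R(z_k)\bigr), \qquad R(a+bi)=\begin{bmatrix} a& b\\ b&-a\end{bmatrix},
\]
where $R$ is real-linear and $R(w)$ has eigenvalues $\pm|w|$. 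It follows that $\sum_{k\in S}v_kv_k^\top$ has eigenvalues $\tfrac12\bigl(\sum_S|z_k|\pm|\sum_S z_k|\bigr)$. So $X\in\S^{n,d}$ if and only if $\sum_{k\in S}|z_k|+|\sum_{k\in S}z_k|\le 2$ for all $S$. Moreover $\{1-s,1-t\}=\bigl\{\tfrac12(\sum_k|z_k|\pm|\sum_k z_k|)\bigr\}$. Every $z\in\CC^n$ arises in this way, since every complex number is a square.

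Now scale. Write $z=c\,z'$ with $\sum_k|z'_k|=1$ and $c\ge 0$, and let $M(z')$ be the max expression in the statement. The constraint becomes $c\,M(z')\le 2$, and $s+t=2-c$. Minimizing $s+t$ therefore amounts to maximizing $c$, so we take $c=2/M(z')$ and then minimize $M$ over the compact set $\{\sum|z'_k|=1\}$. Since $d\ge 2$ and the $z'_k$ are not all zero, $M>0$. This gives $\mu^{\tr}_{n,d}=2-2/\mu^{\max}_{n,d}$, together with
\[
s=1-\tfrac{1}{\mu^{\max}_{n,d}}\Bigl(1-\bigl|\textstyle\sum_k z'_k\bigr|\Bigr),\qquad t=1-\tfrac{1}{\mu^{\max}_{n,d}}\Bigl(1+\bigl|\textstyle\sum_k z'_k\bigr|\Bigr)
\]
up to swapping $s$ and $t$.

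For $\pi$, the condition $s=t$ is equivalent to $\sum_k z_k=0$. In that case $t=1-c/2$, and maximizing $c=2/M$ over the compact set $\{\sum|z'_k|=1,\ \sum z'_k=0\}$, which is nonempty for $n\ge2$, gives $\pi^{\tr}_{n,d}=1-1/\pi^{\max}_{n,d}$. Compactness and continuity of $M$ show that all the minima are attained. This also justifies the claim that the minima in Problem \ref{problem:min} are attained.

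The step I expect to need the most care is the identity $v v^\top=\tfrac12(|u^2|I+R(u^2))$ and its consequence for the top eigenvalue of a sum; everything else is routine bookkeeping. The point is that $R$ is linear, so $\sum_S v_kv_k^\top=\tfrac12\bigl(\sum_S|z_k|\,I+R(\sum_S z_k)\bigr)$. One must also check the reduction to $s,t\le 1$ carefully, so that $I-X\succeq 0$ really holds at the optimum.
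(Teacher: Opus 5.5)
Your proof is correct and follows the paper's route: factor $I-X=V^\top V$ with $V\in\RR^{2\times n}$, reduce local positivity to $\lambda_{\max}(V_SV_S^\top)\le 1$, square the columns as complex numbers, and rescale. Your identity $vv^\top=\tfrac12(|z|I+R(z))$ with $R$ real-linear is a cleaner substitute for the paper's polar-coordinate, Cauchy--Binet and double-angle computation. Treating the scale $c$ as a free variable, and using $s=t\iff\sum_k z_k=0$ for $\pi^{\tr}_{n,d}$, replaces the paper's boundary argument and its separate orthonormal-rows treatment of $\pi^{\tr}_{n,d}$.
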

We will call a configuration $z_1, \ldots, z_n \in \CC$ \emph{optimal for $\mu_{n, d}^{\max}$} (resp.  $\pi_{n, d}^{\max}$) if it achieves the value of $\mu_{n, d}^{\max}$ (resp.  $\pi_{n, d}^{\max}$). Note that the optimization problems are continuous but non-smooth and non-convex. A proof of Proposition \ref{prop: equivalences} is presented in Appendix \ref{sec:appendixA}. Here we give a brief sketch.

\begin{proof}[Sketch of proof]
Let $M \in \Sym_n$ with $M \sim \diag (1,\ldots,1,s,t)$ and $\max\{s,t\} \leq 1$ which is no restriction by Lemma \ref{lem: s,t <= 0}. Since $I-M \sim \diag(0,\ldots,0,1-s,1-t) \succeq 0$, there exists a matrix $B \in \RR^{2 \times n}$ with $I-M=B^\top B$. Then $M \in \S^{n,d}$ if and only if $I_d-B_S^\top B_S \succeq 0$ for all $S \in \binom{[n]}{d}$.

Let $A \in \RR^{2 \times n}$ be a matrix proportional to $B$, and parameterize each of its columns by polar coordinates:
\[
A=\begin{bmatrix}
a_1\cos\b_1 & a_2\cos\b_2 & \dots & a_n\cos\b_n\\
a_1\sin\b_1 & a_2\sin\b_2 & \dots & a_n\sin\b_n
\end{bmatrix}.
\]
If $S \in \binom{[n]}{d}$, then after applying the Cauchy-Binet formula we find that the largest eigenvalue of $A_S^\top A_S$ is
\[
\frac12\left(\sum_{j\in S}|z_j|+\left|\sum_{j\in S} z_j\right|\right),
\] 
where $z_1,\dots,z_n$ are complex numbers such that $\Re z_j = a_j^2\cos 2\beta_j$ and $\Im z_j = a_j^2\sin 2\beta_j$. This gives us the desired relations.
\end{proof}

We note that the above ``complex squaring" trick has been independently used in \cite{nesterenko2024submatrices} to study 2-dimensional linear subspaces that maximally deviate from coordinate subspaces.

\begin{remark} \label{rem: strict implies nonconvex}
Note that for all $2 \leq d \leq n$ we have $\pi_{n,d}^{\max} \geq \mu_{n,d}^{\max}$, and a strict inequality implies non-convexity of the set $\mathcal{L}_{n,d}$ (and thus of $\lambda(\S^{n,d})$). Assume $\pi_{n,d}^{\max} > \mu_{n,d}^{\max}$. By Proposition \ref{prop: equivalences}, this implies that any point $(s, t) \in \mathcal{L}_{n,d}$ at which $s + t$ is minimized satisfies $s \neq t$. In particular, this means that the midpoint of the line segment with endpoints $(s, t), (t, s) \in \mathcal{L}_{n,d}$ does not lie in $\mathcal{L}_{n,d}$.
\end{remark}

\subsection{Conjectured optimal value of the min-max problem} \label{sec:Conjectured optimal value}
We conjecture that the value of $\mu_{n,d}^{\max}$ is $\frac{2d}{n + d - 1}$, which can be formulated as the following inequality.
\begin{conjecture}\label{conjecture: optimal configurations}
Let $n\ge3$ and $1\le d\le n-1$. If $z_1,\dots,z_n\in\CC$, then
    \begin{align}\label{ineq: conj}
        \max_{S\in\binom{[n]}{d}}\sum_{s\in S}|z_s|+\left|\sum_{s\in S} z_s\right|\ge\frac{2d}{n+d-1}\sum_{k=1}^n|z_k|.
    \end{align}
\end{conjecture}

The conjecture holds for $d=1$, since the maximum number in a finite set of real numbers is always at least the average of the numbers in the set. We show that it holds for $d = n - 1$ and $d = 2$ in Proposition \ref{prop: d=n-1} and Theorem \ref{theorem: minmax d=2} respectively. We also verified that it holds for $d = n - 2$; however, to maintain the focus of this section, the lengthy proof is omitted from this paper and will appear as part of the fourth author’s Ph.D. thesis \cite{lee2026}. The conjecture fails for $d=n$. 

We observe that the conjecture always holds for the trivial configuration $z_1 = \ldots = z_n = 0$, so for the remainder of this paper, we will implicitly exclude this configuration and assume $\sum_{k=1}^n |z_k| > 0$. Since the inequality is homogeneous in $z_1, \ldots, z_n$, we may assume without loss of generality that $\sum_{k=1}^n |z_k| = n + d - 1$ and we will do so in our discussions unless specified otherwise.

For all $d\ge2$, equality is attained in Conjecture \ref{conjecture: optimal configurations} for the one-parameter family of configurations
\begin{align}\label{eq: configs}
    (z_1, \ldots, z_n) \in \left\{ (\underbrace{1,\dots,1}_{n-2},u,1-d-u) \suchthat u\in\CC \,\,\text{on the ellipse}\,\, |z|+|1-d-z|=d+1\right\}.
\end{align}
Observe that $u=1$ is always feasible, which gives us the configuration $(1,\dots,1, 1,-d)$. 

We show in Theorem \ref{theorem: optimal config d=2} that for $n \geq 4$ and $d=2$, these configurations are precisely the optimal ones for $\mu_{n,d}^{\max}$ (up to permutation and complex rescaling). We also verified that this is the case for $n \geq 4$ and $d = n - 2$, but again we omit the proof here and defer it to \cite{lee2026}.
In Theorems \ref{thm: optimality of regular n-gon} and \ref{thm: non-convexity d=n-2} we establish that for $n \geq 4$ and $d=n-2$ strict inequality occurs in Conjecture \ref{conjecture: optimal configurations} when restricting to point configurations where $\sum_{k=1}^n z_k = 0$.

\begin{remark} \label{rem: sharp?}
The existence of the above configurations implies $\mu_{n, d}^{\max} \leq \frac{2d}{n + d - 1}$ for all $2 \leq d \leq n - 2$. 
\end{remark}

\begin{proposition}\label{prop: d=n-1}
    Let $n \geq 3$. For $z_1,\dots,z_n\in\CC$ we have
    \begin{align*}
        \max_{S\in\binom{[n]}{n-1}}\sum_{s\in S}|z_s|+\left|\sum_{s\in S} z_s\right|\ge\sum_{k=1}^n|z_k|
    \end{align*}
    with equality if and only if $\sum_{k=1}^n z_k=0$.
\end{proposition}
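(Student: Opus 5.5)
The plan is an averaging argument over the $n$ subsets of size $n-1$, combined with the triangle inequality. Write $\sigma=\sum_{k=1}^n z_k$ and $T=\sum_{k=1}^n|z_k|$. The subsets in $\binom{[n]}{n-1}$ are exactly the complements $[n]\setminus\{k\}$, and for each $k$ the corresponding quantity is
\[
F_k:=\sum_{s\neq k}|z_s|+\Bigl|\sum_{s\ne k}z_s\Bigr| = T-|z_k|+|\sigma-z_k|.
\]
We will bound the maximum of the $F_k$ from below by their average:
\[
\max_k F_k \ \ge\ \frac1n\sum_{k=1}^n F_k = \frac{(n-1)T}{n}+\frac1n\sum_{k=1}^n|\sigma-z_k|.
\]
By the triangle inequality, $\sum_k|\sigma-z_k|\ge\bigl|\sum_k(\sigma-z_k)\bigr| = (n-1)|\sigma|$. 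This alone only yields $\max_k F_k\ge \frac{n-1}{n}(T+|\sigma|)$, which is too weak. So the plan is to avoid averaging and instead use the pointwise bound $|\sigma-z_k|\ge |z_k|-|\sigma|$, which gives $F_k\ge T-|\sigma|$. This is also not enough on its own, so we need a sharper per-index estimate.

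For the sharper estimate, take $k$ to be an index maximizing $|\sigma - z_k| - |z_k|$, and show that this quantity is nonnegative. Equivalently, we need some $k$ with $|\sigma-z_k|\ge|z_k|$, i.e. $\Re(\bar\sigma z_k)\le |\sigma|^2/2$. If $\sigma=0$, then $|\sigma - z_k| = |z_k|$ for every $k$, so every $F_k=T$ and equality holds. If $\sigma\neq0$, argue by contradiction: suppose $\Re(\bar\sigma z_k)>|\sigma|^2/2$ for all $k$. Summing over $k$ gives $|\sigma|^2=\Re(\bar\sigma\sigma)>n|\sigma|^2/2$, which is impossible for $n\ge 2$. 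Hence some $k$ has $F_k\ge T$, which proves the inequality.

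For the equality case, the "if" direction is the computation above: when $\sigma=0$, all $F_k$ equal $T$, so the maximum is exactly $T$. For "only if", suppose $\sigma\ne0$ and show that some $F_k>T$, which requires strict inequality $\Re(\bar\sigma z_k)<|\sigma|^2/2$ for some $k$. If instead $\Re(\bar\sigma z_k)\ge|\sigma|^2/2$ held for all $k$, summing would again give $|\sigma|^2\ge n|\sigma|^2/2$, contradicting $n\ge3$ and $\sigma \neq 0$. So the maximum is strictly larger than $T$. This step is where $n\ge3$ is used; I expect it to be the only delicate point. Indeed, for $n=2$ with $z_1=z_2$ equality holds even though $\sigma \neq 0$.
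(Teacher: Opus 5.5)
Your proof is correct and is essentially the paper's argument: your condition $\Re(\bar\sigma z_k)\le |\sigma|^2/2$ is just the expanded form of $|z_k|^2\le|\sigma-z_k|^2$, which the paper squares and sums over $k$ (in real coordinates) to reach the contradiction $0>(n-2)|\sigma|^2$. The equality case is handled the same way in both, with $n\ge 3$ entering at the same point, and the abandoned averaging and pointwise-triangle-inequality attempts in your first paragraph can simply be deleted.
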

\begin{proof}
    Let $T=\sum_{k=1}^n z_k$. The given inequality rearranges to
    \begin{align*}
        \max_{k=1,\dots,n}|T-z_k|-|z_k|\ge0.
    \end{align*}
    For each $k \in [n]$, let $z_k=x_k+\mathrm{i}y_k$, where $x_k,y_k\in\RR$. Suppose that $|z_k|>|T-z_k|$ for all $k \in [n]$. Squaring each of these inequalities and adding them up we obtain
    \begin{align*}
        \sum_{k=1}^n (x_k^2+y_k^2)&>\sum_{k=1}^n(\Re(T)-x_k)^2+(\Im(T)-y_k)^2\\
       \Rightarrow 0 &>(n-2)(\Re(T)^2+\Im(T)^2),      
    \end{align*}
    a contradiction. Therefore, we have $|z_k| \leq |T - z_k|$ for some $k$, which gives the inequality. Additionally, equality is attained if and only if $|z_k| \geq |T - z_k|$ for all $k$, where at least one $k$ attains equality. By the same process of squaring and adding these inequalities, we see that this forces $\Re(T)=0$ and $\Im(T)=0$, i.e., $T=0$. Conversely, if $T = 0$ then the left-hand side is zero and we attain equality.
\end{proof}
In particular, Proposition \ref{prop: d=n-1} implies that the values $\mu_{n,n-1}^{\max}$ and $\pi_{n,n-1}^{\max}$ are equal. 

\subsection{Non-convexity of \texorpdfstring{$\mathcal{L}_{n,2}$}{Ln2}}

The following theorem establishes Conjecture \ref{conjecture: optimal configurations} for $d=2$. The ideas of the proof allow us to find all optimal configurations and show non-convexity of $\mathcal{L}_{n,2}$ and $\lambda(\mathcal{S}^{n,2})$, for all $n\ge4$.

\begin{theorem}\label{theorem: minmax d=2}
Let $n \geq 3$. If $z_1,\dots,z_n\in\CC$,  then
    \[
        \max_{1\le k<l\le n}|z_k|+|z_l|+|z_k+z_l|\ge\frac4{n+1}\sum_{k=1}^n|z_k|
    \]
Equivalently, we have $s+t \geq \frac{3-n}{2}$ for all $(s,t) \in \mathcal{L}_{n,2}$ and this bound is tight.
\end{theorem}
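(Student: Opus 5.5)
The plan is to prove the inequality by averaging the pair-sum over \emph{all} pairs. Then the whole problem reduces to a Hlawka-type lower bound for $\sum_{k<l}|z_k+z_l|$, and that bound reduces to a real one-dimensional inequality.

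\textbf{Step 1: averaging.} Let $M$ be the maximum on the left-hand side and $\Sigma=\sum_k|z_k|$. Summing $|z_k|+|z_l|+|z_k+z_l|\le M$ over all $\binom n2$ pairs gives
\[
\binom n2 M\;\ge\;(n-1)\Sigma+\sum_{k<l}|z_k+z_l| .
\]
It therefore suffices to prove
\begin{align}\label{eq: plan hlawka}
\sum_{1\le k<l\le n}|z_k+z_l|\;\ge\;\frac{(n-1)^2}{n+1}\sum_{k=1}^n|z_k| \qquad (n\ge 3).
\end{align}
Indeed, \eqref{eq: plan hlawka} yields $\binom n2M\ge \frac{(n-1)\cdot 2n}{n+1}\Sigma$, which is exactly $M\ge\frac{4}{n+1}\Sigma$. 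A sanity check that no loss occurs in averaging: for the configuration \eqref{eq: configs} with $d=2$, namely $(1,\dots,1,u,-1-u)$ with $|u|+|1+u|=3$, every pair sum equals $4$ and $\sum_{k<l}|z_k+z_l|=n(n-2)+1=(n-1)^2$. So \eqref{eq: plan hlawka} is tight there, and the averaging approach is not lossy.

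\textbf{Step 2: reduction to real numbers.} Use the Crofton-type identity $|w|=\frac14\int_0^{2\pi}|\Re(e^{-\mathrm i\varphi}w)|\,d\varphi$ for $w\in\CC$. Both sides of \eqref{eq: plan hlawka} are sums of moduli of real-linear forms in the $z_k$. Hence it is enough to prove \eqref{eq: plan hlawka} for real $x_1,\dots,x_n$, apply it to $x_k=\Re(e^{-\mathrm i\varphi}z_k)$, and integrate in $\varphi$.

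\textbf{Step 3: the real inequality.} Fix the sign pattern: $x_1,\dots,x_p\ge 0$ and $x_{p+1},\dots,x_n\le 0$, with $q=n-p$. On this orthant, the right-hand side is the linear form $\frac{(n-1)^2}{n+1}\bigl(\sum_{i\le p}x_i-\sum_{j>p}x_j\bigr)$. The left-hand side is convex and invariant under permuting the positives among themselves and the negatives among themselves. By convexity plus symmetry (Jensen over the symmetric group acting on each block), replacing all positive entries by their mean $a$ and all negative entries by their mean $-b$ does not increase the left-hand side and keeps the right-hand side fixed. This reduces \eqref{eq: plan hlawka} to the two-parameter inequality
\[
2a\binom p2+2b\binom q2+pq|a-b|\;\ge\;\frac{(n-1)^2}{n+1}(pa+qb),\qquad a,b\ge0,\ p+q=n .
\]
Both sides are piecewise linear and homogeneous in $(a,b)$, so it suffices to check the rays $b=0$, $a=0$ and $a=b$. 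This is an elementary inequality in $p,q$; the equality cases should be $(p,q)=(n-1,1)$ with $b=a\cdot\frac{n-1}{2}$, matching $(1,\dots,1,-d)$ after projection, and its mirror.

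\textbf{Step 4: equivalent form and tightness.} The statement about $\mathcal L_{n,2}$ then follows from Proposition \ref{prop: equivalences}: $\mu^{\max}_{n,2}\ge\frac4{n+1}$ gives $\mu^{\tr}_{n,2}\ge 2(1-\frac{n+1}{4})=\frac{3-n}2$. Tightness comes from Remark \ref{rem: sharp?}, i.e. the configuration $(1,\dots,1,1,-2)$.

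\textbf{Main obstacle.} I expect the main work to be Step 3: checking the reduced $(p,q,a,b)$ inequality cleanly for all splits. Some care is also needed because the kink $a=b$ must be among the tested rays. If that check fails for some split, the fallback is to keep the averaging idea but weight the pairs non-uniformly. For example, one could use only the pairs in a cyclic ordering by argument together with the pair of largest moduli, as suggested by the structure of the optimal configurations.
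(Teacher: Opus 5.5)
Your Step 1 reduces the theorem to an inequality that is false for every $n\ge 3$, so the argument cannot be completed as planned.

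\textbf{Counterexample.} Take the real configuration $z=(1,\dots,1,-1)$ with $n-1$ ones. Then $\sum_{k<l}|z_k+z_l|=2\binom{n-1}{2}=(n-1)(n-2)$, while $\frac{(n-1)^2}{n+1}\sum_k|z_k|=\frac{n(n-1)^2}{n+1}$. Your target inequality would therefore say $(n-2)(n+1)\ge n(n-1)$, i.e.\ $-2\ge 0$. For $n=3$ your target is $|x+y|+|y+z|+|z+x|\ge|x|+|y|+|z|$, which fails at $(1,1,-1)$. Balanced configurations are worse: $(1,1,-1,-1)$ gives $4$ against $\frac{36}{5}$.

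\textbf{Why averaging is lossy.} For $(1,\dots,1,-1)$ the uniform average of the pair sums is $\frac{4(n-1)}{n}<\frac{4n}{n+1}$, although the maximum is $4$. Nearly opposite pairs pull the average down, while the maximum ignores them. Your sanity check only shows that averaging is lossless \emph{at} the extremal family $(1,\dots,1,u,-1-u)$, not that the averaged inequality holds elsewhere. Since the counterexamples are real, Step 2 cannot rescue it. In Step 3 the failure occurs exactly at the kink you flagged: the split $(p,q)=(n-1,1)$ on the ray $a=b$ gives $(n-1)(n-2)a$ on the left versus $\frac{n(n-1)^2}{n+1}a$ on the right.

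\textbf{The fallback.} Weighting pairs non-uniformly is not carried out, and it is not a small fix. For instance, averaging only over cyclically consecutive pairs (ordered by argument) gives $3<\frac{16}{5}$ for $(1,1,-1,-1)$. So the choice of weights is delicate and would itself need a proof.

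\textbf{The missing idea.} You need to use the angular constraint nonlinearly rather than averaging linearly. The paper does this in four steps:
\begin{itemize}
\item It uses the triangle inequality and induction on $n$ to reduce to $\frac{1}{n+1}S<|z_k|<\frac{2}{n+1}S$.
\item It orders the $z_k$ by argument and supposes every consecutive pair sum is below $C=\frac{4}{n+1}S$.
\item This forces each consecutive angle $\gamma_k$ to exceed $\arccos\bigl(\frac{(C-r_k-r_{k+1})^2-r_k^2-r_{k+1}^2}{2r_kr_{k+1}}\bigr)$.
\item Proposition \ref{prop: spherical inequality} shows these lower bounds sum to at least $2\pi$, contradicting $\sum_k\gamma_k\le 2\pi$.
\end{itemize}
Your Step 4 (the translation to $\mathcal{L}_{n,2}$ and tightness via Proposition \ref{prop: equivalences} and Remark \ref{rem: sharp?}) is fine once the inequality is established.
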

Recall that the inequality is tight for configurations $(1,\ldots,1,u,-1-u)$ with $u \in \CC$ on the ellipse $|z|+|1+z|=3$. Thus, the equivalence of the two assertions in the theorem follows from Proposition \ref{prop: equivalences}. The proof relies on the validation of several trigonometric inequalities. We only give an outline here and the full proof can be found in Appendix \ref{sec:appendixC}. 
\begin{proof}[Sketch of proof]
    Let $S:=\sum_{k=1}^n|z_k|$.
    If any $|z_m|\ge\frac2{n+1}S$ then by the triangle inequality we have $|z_m|+|z_k|+|z_m+z_k|\ge2|z_m|\ge\frac4{n+1}S$.
    If any $|z_m|\le\frac1{n+1}S$ then $S-|z_m|\ge\frac{n}{n+1}S$, and by induction on $n$ (base case is Proposition \ref{prop: d=n-1} with $n=3$),
    \[
    \max_{k<l\in[n]\setminus\{m\}} |z_k|+|z_l|+|z_k+z_l|\ge\frac4{n}(S-|z_m|)\ge\frac4{n}\cdot\frac{n}{n+1}S=\frac4{n+1}S~.
    \]
    So from now on suppose $\frac1{n+1}S<|z_k|<\frac2{n+1}S$ for $k=1,\dots,n$.
    Suppose $z_1,\dots,z_n$ are ordered increasingly by argument, and let $r_k:=|z_k|$.
    Let $\gamma_k$ be the angle between $z_k$ and $z_{k+1}$ for $k=1,\dots,n$. So $\gamma_k\in[0,\pi]$ and $2\pi\ge\sum_{k=1}^n\gamma_k$. 
    Let $C:=\frac4{n+1}S$ and suppose that $|z_k|+|z_{k+1}|+|z_k+z_{k+1}|<C$ for $k=1,\dots,n$. Hence,
    \begin{align*}
        r_k+r_{k+1}+\sqrt{r_k^2+r_{k+1}^2+2r_kr_{k+1}\cos\gamma_k} &< C\\
        \Rightarrow r_k^2+r_{k+1}^2+2r_kr_{k+1}\cos\gamma_k &< (C-r_k-r_{k+1})^2\\
        \Rightarrow \cos\gamma_k &< \frac{(C-r_k-r_{k+1})^2-(r_k^2+r_{k+1}^2)}{2r_k r_{k+1}}\\
        \Rightarrow \gamma_k &> \arccos\left(\frac{(C-r_k-r_{k+1})^2-(r_k^2+r_{k+1}^2)}{2r_k r_{k+1}}\right)~.
    \end{align*}

    Let
    \[
    F(r_1,\dots,r_n):=\sum_{k=1}^n \arccos\left(\frac{(C-r_k-r_{k+1})^2-(r_k^2+r_{k+1}^2)}{2r_k r_{k+1}}\right)~.
    \]    
    By Proposition \ref{prop: spherical inequality}, we have $F\ge2\pi$ on the simplex $\{x\in\RR^n\, \suchthat \, |x|_1=S, \frac14C\le x_i\le \frac12C\}$, so
    \[
    2\pi\ge\sum_{k=1}^n\gamma_k>F(r_1,\dots,r_n)\ge2\pi
    \]
    a contradiction.
\end{proof}

Following the proof of Theorem \ref{theorem: minmax d=2}, we recover all optimal point configurations.

\begin{remark}\label{remark: consecutive angles}
    Using the conventions and notation of the proof of Theorem \ref{theorem: minmax d=2}, observe that any configuration $z_1,\dots,z_n\in\CC$ that is optimal for $\mu_{n, 2}^{\max}$ must satisfy $\sum_{k=1}^n \gamma_k=2\pi$. Otherwise, $z_1,\dots,z_n$ are contained in a sector of angle $\gamma<\pi$ and it is possible to increase all the angles between consecutive $z_k$ in the sector, while remaining in the sector, to obtain a new point configuration where $\max_{1\le k<l\le n}|z_k|+|z_l|+|z_k+z_l|<\frac{4}{n+1}\sum_{k=1}^n|z_k|$, a contradiction.
\end{remark}
The following Proposition yields a recursive principle (Lemma \ref{lem: recursive principle}) for the classification of optimal point configurations for $\mu_{n,2}^{\max}$. This allows us to give a proof of the classification via induction.
\begin{proposition}\label{prop: two of length one}
    If a point configuration $z_1,\ldots,z_n\in\CC$ is optimal for $\mu_{n,2}^{\max}$, then there exist distinct $m,m'\in [n]$ such that $z_m=z_{m'}$ and $|z_m|=|z_{m'}|=\frac1{n+1}\sum_{k=1}^n |z_k|$.
\end{proposition}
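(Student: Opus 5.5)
Normalize $S:=\sum_{k=1}^n|z_k|=n+1$, so that optimality for $\mu_{n,2}^{\max}$ means $\max_{k<l}|z_k|+|z_l|+|z_k+z_l|=4$. I would reuse the case analysis from the proof of Theorem \ref{theorem: minmax d=2}, now tracking the equality cases instead of strict inequalities.

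\textbf{Step 1: reduce to moduli in $[1,2]$.} Suppose some $|z_m|>2$. Then $|z_m|+|z_k|+|z_m+z_k|\ge 2|z_m|>4$, contradicting optimality. Hence $1\le r_k\le 2$ for all $k$.

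\textbf{Step 2: some modulus equals $1$.} Suppose instead that $1<r_k\le 2$ for all $k$. Order the points by argument and use the notation $\gamma_k$ and $C=4$ from the sketch of Theorem \ref{theorem: minmax d=2}. Optimality gives $|z_k|+|z_{k+1}|+|z_k+z_{k+1}|\le 4$ for every $k$, hence $\gamma_k\ge \arccos(\cdots)$, and summing gives $2\pi\ge\sum\gamma_k\ge F(r_1,\dots,r_n)$. Proposition \ref{prop: spherical inequality} gives $F\ge 2\pi$ on the relevant simplex. So I would need the equality case of that proposition: $F=2\pi$ should force some coordinate to lie on the boundary $x_i=\frac14C=1$. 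A point with all $r_k>1$ would then have $F>2\pi$, a contradiction.

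This equality analysis for the spherical inequality is the main obstacle. I would first try to extract it from the proof of Proposition \ref{prop: spherical inequality}, showing the minimum of $F$ on the simplex is attained only on the faces $x_i=1$ (for example via strict convexity or monotonicity in each pair of consecutive variables). If that fails, I would treat a small perturbation argument as the fallback: on a configuration with all $r_k>1$ and equality everywhere, perturb the moduli to decrease $F$ below $2\pi$.

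\textbf{Step 3: peel off a point of modulus $1$ and recurse.} Let $r_m=1$ and remove $z_m$. The remaining $n-1$ points have total modulus $n$. By Theorem \ref{theorem: minmax d=2} (or Proposition \ref{prop: d=n-1} when $n-1=3$), their maximum pair value is at least $\frac4n\cdot n=4$. It is also at most $4$, since it is bounded by the maximum over the full configuration. So the reduced configuration is optimal for $\mu_{n-1,2}^{\max}$.

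I would then induct on $n$ to obtain two equal points of modulus $\frac{1}{n}\cdot n=1$ in the reduced configuration. These are also two equal points of modulus $1=\frac{S}{n+1}$ in the original configuration, as claimed.

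The base case $n=3$ is the equality case of Proposition \ref{prop: d=n-1}: $z_1+z_2+z_3=0$ with total modulus $4$. This does not force two equal points, so the base must be handled carefully. I would use the induction only down to $n=4$, and at $n=4$ argue directly: after removing $z_m$, the remaining three points sum to zero. Combining this with the constraint that every pair involving $z_m$ has value at most $4$ forces $z_m$ to equal one of the others, since $|z_m|+|z_k|+|z_m+z_k|\le 4$ pins down the angle.

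Alternatively, the statement only needs $m,m'$ with $z_m=z_{m'}$ and both moduli $1$. I would therefore try to show directly that a point of modulus $1$ and its neighbor must coincide, by applying the angle bound $\gamma\ge\arccos(\cdots)$ with $r=1$, which yields $\gamma\ge0$ with equality only if the neighbor also has modulus $1$.
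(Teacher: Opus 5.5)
\textbf{The gap: Step 2 is not carried out.} This is the heart of the proof. You correctly reduce the problem to showing that $F(r_1,\dots,r_n)=2\pi$ is impossible when all $r_k>1$. But you only list possible strategies (convexity, monotonicity, perturbation), and the inequality of Proposition~\ref{prop: spherical inequality} by itself says nothing about when equality holds.

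The missing ingredient is the equality clause of Lemma~\ref{lemma: arccos triangle ieq}. Work in the variables $x_k=r_k-1\in[0,1]$ with $\sum_k x_k=1$. If every $x_k>0$, then none of (i)--(iv) can hold for $(a,b,c,d)=(x_{n-2},x_{n-1},x_n,x_1)$. So the lemma is strict, and $\sum_k h(x_k,x_{k+1})$ strictly exceeds the cyclic sum for the $(n-1)$-tuple $(x_1,\dots,x_{n-2},x_{n-1}+x_n)$. That sum is $\ge\pi$ by Proposition~\ref{prop: spherical inequality}, so $F>2\pi$, which is exactly the contradiction you need.

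The paper extracts more from the same lemma. Applying it repeatedly collapses the cyclic sum to $h(x_1,x_2)+h(x_2,x_3+\dots+x_n)+h(x_3+\dots+x_n,x_1)$, which equals $\pi$ by Corollary~\ref{cor: arcos n=3}. Hence every application is an equality, and for $n\ge4$ conditions (i)--(iv) force two \emph{cyclically adjacent} zeros $x_k=x_{k+1}=0$. The chain $2\pi\ge\sum\gamma_k\ge F\ge2\pi$ also forces every consecutive pair to attain the value $4$. So $|z_k+z_{k+1}|=2=|z_k|+|z_{k+1}|$, which gives $z_k=z_{k+1}$ immediately. With this, your Step 3 induction and the separate $n=4$ base case become unnecessary.

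\textbf{Smaller problems.}

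Step 1 proves only $r_k\le2$. The bound $r_k\ge1$ needs the removal argument from the sketch of Theorem~\ref{theorem: minmax d=2}: if $r_m<1$, the other $n-1$ points have total modulus $>n$, so by that theorem some pair among them has value exceeding $4$.

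Your $n=4$ base case is justified only by saying the constraint ``pins down the angle''. But $|z_m|+|z_k|+|z_m+z_k|\le4$ is an inequality and pins nothing down by itself. The case can be completed as follows:
\begin{itemize}
\item With $z_m=1$, the constraint reads $\Re z_k\le 4-3r_k$.
\item The three remaining points sum to $0$ and have total modulus $4$. Summing over them forces equality, so $\Im z_k=\pm\sqrt{8x_k(1-x_k)}$ with $x_k=r_k-1$.
\item By strict subadditivity of $\sqrt{x(1-x)}$, these imaginary parts can cancel only if some $x_k=0$, i.e.\ $z_k=1=z_m$.
\end{itemize}
Even so, this argument still presupposes Step 2 at $n=4$.

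Finally, the alternative in your last paragraph is circular. With $r_k=1$, the angle bound is $\gamma_k\ge\arccos\frac{4-3r_{k+1}}{r_{k+1}}$, which is positive whenever $r_{k+1}>1$. Nothing forces $\gamma_k=0$ unless you already know $r_{k+1}=1$.
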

The proof requires lemmas also needed to prove Theorem \ref{theorem: minmax d=2} and can be found in Appendix \ref{sec:appendixC}.

\begin{lemma}\label{lem: recursive principle}
Let $n \geq 4$ and assume $z_1, \ldots, z_n \in \CC$ is an optimal configuration for $\mu_{n, 2}^{\max}$ such that $|z_1| = \frac{1}{n + 1}\sum_{k=1}^n |z_k|$. Then $z_2, \ldots, z_n$ is an optimal configuration for $\mu_{n - 1, 2}^{\max}$.
\end{lemma}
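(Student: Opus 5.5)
We would prove Lemma \ref{lem: recursive principle} by a direct comparison of values, using Theorem \ref{theorem: minmax d=2} for both $n$ and $n-1$. Normalize $S:=\sum_{k=1}^n|z_k| = n+1$, so $|z_1|=1$ and $S':=\sum_{k=2}^n |z_k| = n$. Write
\[
\Phi_n(z_1,\dots,z_n):=\max_{1\le k<l\le n}|z_k|+|z_l|+|z_k+z_l|.
\]
Optimality for $\mu_{n,2}^{\max}$ means, by Theorem \ref{theorem: minmax d=2} and homogeneity, that $\Phi_n(z_1,\dots,z_n)=\frac{4}{n+1}S = 4$. For the subconfiguration $z_2,\dots,z_n$ we have $n-1\ge 3$, so Theorem \ref{theorem: minmax d=2} applies. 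It gives
\[
\Phi_{n-1}(z_2,\dots,z_n)\ge \tfrac{4}{n}S' = 4.
\]

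For the reverse inequality, note that the maximum defining $\Phi_{n-1}(z_2,\dots,z_n)$ runs over a subset of the pairs appearing in $\Phi_n(z_1,\dots,z_n)$. Hence
\[
\Phi_{n-1}(z_2,\dots,z_n)\le \Phi_n(z_1,\dots,z_n)=4.
\]
Combining the two bounds gives $\Phi_{n-1}(z_2,\dots,z_n)=4=\frac{4}{n}\sum_{k=2}^n|z_k|$. This says exactly that $z_2,\dots,z_n$ attains the lower bound of Theorem \ref{theorem: minmax d=2} for $n-1$ points. After rescaling to total modulus $1$, the value equals $\mu_{n-1,2}^{\max}=\frac{4}{n}$, which is the definition of being optimal for $\mu_{n-1,2}^{\max}$.

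The only points needing care are bookkeeping. First, the identification $\mu_{m,2}^{\max}=\frac{4}{m+1}$ for $m\in\{n-1,n\}$ uses both the lower bound of Theorem \ref{theorem: minmax d=2} and the matching configurations \eqref{eq: configs} from Remark \ref{rem: sharp?}. Second, the case $n-1=3$ is covered, because Theorem \ref{theorem: minmax d=2} holds for $n\ge 3$; alternatively one can invoke Proposition \ref{prop: d=n-1}. Third, the subconfiguration is not the zero configuration, since $S'=n>0$. I do not expect a genuine obstacle here. The substantive input, namely that such a $z_1$ with $|z_1|=\frac{1}{n+1}S$ exists, is supplied separately by Proposition \ref{prop: two of length one}, and the lemma itself is a monotonicity-plus-sharpness argument.
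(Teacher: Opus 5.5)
Your proposal is correct and is essentially the paper's own proof. The paper sandwiches $\max_{2\le k<l\le n}|z_k|+|z_l|+|z_k+z_l|$ between $\frac{4}{n+1}\sum_{k=1}^n|z_k|=4$ (from optimality and restricting the maximum to a subset of pairs) and $\frac{4}{n}\sum_{k=2}^n|z_k|=4$ (from Theorem \ref{theorem: minmax d=2} applied to $n-1$ points). This forces equality, which is exactly your argument.
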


\begin{proof}
By Theorem \ref{theorem: minmax d=2} and the assumption, we have
\begin{align*}
4=\frac{4}{n+1}\sum_{k=1}^n|z_k|&=\max_{1 \leq k < l \leq n}|z_k|+|z_l|+|z_k+z_l| \nonumber \\ 
&\geq \max_{2 \leq k < l \leq n}|z_k|+|z_l|+|z_k+z_l| \geq \frac{4}{n}\sum_{k=2}^n |z_k|=4 
\end{align*} 
so $\max_{2 \leq k < l \leq n}|z_k|+|z_l|+|z_k+z_l| = 4$ and hence $z_2, \ldots, z_n$ is an optimal configuration for $\mu_{n-1, 2}^{\max}$.
\end{proof}

\begin{theorem}\label{theorem: optimal config d=2}
    Let $n \geq 4$. A point configuration $z_1,\ldots,z_n \in \CC$ is optimal for $\mu_{n,2}^{\max}$ if and only if 
    \begin{center}
        $z_1=\cdots = z_{n-2}$,\quad $|z_1|=\frac{1}{n+1}\sum_{k=1}^n|z_k|,$\quad and\quad $z_1+z_{n-1}+z_n=0$
    \end{center}  
    holds up to permutation of the complex numbers $z_1,\ldots,z_n$.
\end{theorem}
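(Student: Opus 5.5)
We prove both directions, normalizing throughout so that $\sum_{k=1}^n|z_k| = n+1$; the optimal value is then $4$ by Theorem \ref{theorem: minmax d=2}.

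\emph{Sufficiency.} Assume $z_1=\dots=z_{n-2}=w$ with $|w|=1$ and $z_{n-1}+z_n=-w$. After a complex rotation we may take $w=1$, so $z_{n-1}=u$ and $z_n=-1-u$. The normalization gives $|u|+|1+u|=3$, which is exactly the ellipse in \eqref{eq: configs}. We check every pair directly:
\begin{itemize}
\item two ones give $1+1+2=4$;
\item a one with $u$ gives $1+|u|+|1+u| = 4$;
\item a one with $-1-u$ gives $1+|1+u|+|u| = 4$;
\item the pair $\{u,-1-u\}$ gives $|u|+|1+u|+1 = 4$.
\end{itemize}
So the maximum equals $4$, and the configuration is optimal.

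\emph{Necessity.} We argue by induction on $n \geq 4$. In each step we first apply Proposition \ref{prop: two of length one}: it gives indices $m\neq m'$ with $z_m=z_{m'}=:w$ and $|w|=1$. Relabel so that $m=1$. By Lemma \ref{lem: recursive principle}, $z_2,\dots,z_n$ is optimal for $\mu_{n-1,2}^{\max}$. Note that after deleting $z_1$ the total modulus is $n$, which is the correct normalization for $n-1$ points.

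\emph{Base case $n=4$.} Here $z_2,z_3,z_4$ is optimal for $\mu_{3,2}^{\max}$. Since $d=2=n-1$ for three points, Proposition \ref{prop: d=n-1} says that equality forces $z_2+z_3+z_4=0$. One of $z_2,z_3,z_4$ equals $w$, say $z_2=w$. Then $z_1=z_2=w$, $|w|=1$, and $w+z_3+z_4=0$, which is the claimed form.

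\emph{Inductive step $n\ge 5$.} The induction hypothesis applied to $z_2,\dots,z_n$ gives, up to permutation, $n-3$ equal points $v$ with $|v|=1$ and two remaining points $p,q$ with $v+p+q=0$. It remains to show that $z_1=w$ coincides with $v$.

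We know $w$ also appears among $z_2,\dots,z_n$. Since $n-3\ge2$, Proposition \ref{prop: two of length one} could be satisfied by a pair inside the $v$-block, so this alone does not settle it. The remaining possibility is that $w\in\{p,q\}$, say $p=w$, with $w\neq v$. Then the configuration contains $v$ repeated $n-3$ times, $w$ twice, and $q=-v-w$, where $|v|=|w|=1$. We rule this out as follows:
\begin{itemize}
\item The normalization forces $|q| = 2$, i.e.\ $|v+w|=2$.
\item Since $|v|=|w|=1$, this means $w=v$, which is a contradiction.
\end{itemize}
Hence $w=v$, and then $z_1,\dots,z_{n-2}$ all equal $v$ (the $v$-block together with $z_1$) and $v+p+q=0$, as required.

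\emph{Remaining technical point.} The induction output is only "up to permutation", so in the inductive step one has to track where the specific index $m'$ from Proposition \ref{prop: two of length one} lands; the argument above handles this by allowing $w$ to be either in the $v$-block or one of $p,q$. Degenerate cases such as $p=v$ or $q=v$ simply enlarge the block of equal points and are consistent with the statement. The substantive input is Proposition \ref{prop: two of length one}, which we take as given from the appendix.
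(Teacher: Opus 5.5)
Your proof is correct and follows the paper's argument: sufficiency by checking every pair directly, and necessity by induction via Proposition \ref{prop: two of length one} and Lemma \ref{lem: recursive principle}, with the same $|v+w|=2 \Rightarrow w=v$ contradiction in the inductive step. The only difference is the base case $n=4$: you get $z_2+z_3+z_4=0$ from Lemma \ref{lem: recursive principle} plus the equality case of Proposition \ref{prop: d=n-1} for three points, whereas the paper redoes that squaring-and-adding computation by hand; both are valid.
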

\begin{proof} We first see that the given configuration is indeed optimal; if $z_1=\cdots = z_{n-2}$, $|z_1| =1$, and $z_1+z_{n-1}+z_n=0$, then we find 
\[
\max_{1 \leq k < l \leq n-2}|z_k|+|z_l|+|z_k+z_l| = 4
\]
and 
\[
\max_{k<l \in \{1,n-1,n\}}|z_k|+|z_l|+|z_k+z_l|=|z_1|+|z_{n-1}|+|z_n|=n+1-(n-2)+1=4.
\]
It follows from Theorem \ref{theorem: minmax d=2} that $z_1,\ldots,z_n$ is indeed an optimal configuration.

Now assume that $z_1, \ldots, z_n \in \CC$  is an optimal configuration. We will show by induction on $n$ that it must have the described form. 
The base case is when $n = 4$. Let $z_1,z_2,z_3,z_4 \in \CC$ be an optimal configuration for $\mu_{4, 2}^{\max}$. By Proposition \ref{prop: two of length one}, there are distinct indices $m, m' \in [4]$ such that $z_m = z_{m'}$ and $|z_m| = |z_{m'}| = 1$, where we  assume without loss of generality that $m = 1$, $m' = 2$, and $z_1 = 1$.
Note that $|z_3|+|z_4|=3$ and $\max_{1 \leq j < k \leq 4} |z_j| + |z_k| + |z_j + z_k| = 4$, so
\[
|z_1| + |z_3| + |z_1 + z_3| \leq 4 \quad \Rightarrow \quad |1 + z_3| \leq 4 - |z_1| - |z_3| = |z_4|.
\]
Squaring both sides yields $1 + |z_3|^2 + 2\Re z_3 \leq |z_4|^2$ and likewise $1 + |z_4|^2 + 2\Re z_4 \leq |z_3|^2$. Adding these gives \[
2 + 2\Re (z_3 + z_4) \leq 0 \quad \Rightarrow \quad \Re (z_3 + z_4) \leq -1.
\]
On the other hand, we have $
|z_3| + |z_4| + |z_3 + z_4| \leq 4$, so $|z_3 + z_4| \leq 1.
$
Together, these force $z_3 + z_4 = -1$. Thus, the optimal configuration has the described form.

For the inductive step, assume that every optimal configuration has the described form for $n$. Let $z_1,\ldots,z_{n+1}$ be optimal for $\mu_{n+1,2}^{\max}$. Again by Proposition \ref{prop: two of length one}, we may assume without loss of generality that $z_1 = z_2 = 1$. By Lemma \ref{lem: recursive principle}, we also see that the configuration $z_2,\ldots,z_{n+1}$ is optimal for $\mu_{n, 2}^{\max}$. It follows from the induction hypothesis that there are $n - 2$ numbers from $z_2, \ldots, z_{n + 1}$ all equal and with modulus 1. Suppose that $z_2$ is not one of those numbers; by the induction hypothesis, there is $b \in \{3, \ldots, n + 1\}$ and $w \in \CC$ such that $|w| = 1$, $w \neq 1$, $w + 1 + z_b = 0$, and $|z_b| = 2$. This forces $|w + 1| = 2$ and hence $w = 1$, which is a contradiction. So, $z_2$ must be one of those $n - 2$ numbers, say $z_2 = \cdots = z_{n-1} = 1$ with $z_n+z_{n+1}=-1$ and $|z_{n}|+|z_{n+1}|=3$. So we conclude that the configuration $z_1, \ldots, z_{n + 1}$ has the described form and that the statement is true for $n + 1$.
\end{proof}

The following corollary is a direct consequence of Theorems \ref{theorem: minmax d=2} and \ref{theorem: optimal config d=2}.

\begin{corollary}\label{corollary: nonconvexity}
    For $n \geq 4$, the only optimal solutions for $\mu_{n,2}^{\operatorname{tr}}$ are $(s, t)=(0, \frac{3-n}{2})$ and $(s, t)=(\frac{3-n}{2},0)$. In particular, the sets $\mathcal{L}_{n, 2}$ and $\lambda(\S^{n,2})$ are not convex.
\end{corollary}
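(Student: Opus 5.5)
We combine Proposition \ref{prop: equivalences} with Theorems \ref{theorem: minmax d=2} and \ref{theorem: optimal config d=2}. By Theorem \ref{theorem: minmax d=2}, $\mu_{n,2}^{\max} = \frac{4}{n+1}$, so $\mu_{n,2}^{\tr} = 2\bigl(1 - \frac{n+1}{4}\bigr) = \frac{3-n}{2}$.

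To locate the minimizers, we need the correspondence to run in both directions. The proof of Proposition \ref{prop: equivalences} in Appendix \ref{sec:appendixA} gives a bijection, up to rotation and scaling, between matrices $M \sim \diag(1,\dots,1,s,t)$ with $s,t\le 1$ and configurations $z_1,\dots,z_n$. So any minimizer $(s,t)$ of $\mu_{n,2}^{\tr}$ yields a configuration optimal for $\mu_{n,2}^{\max}$, and the relation $s = 1-\frac{1}{\mu}\bigl(1-|\sum z_j|\bigr)$, $t = 1-\frac{1}{\mu}\bigl(1+|\sum z_j|\bigr)$ holds up to swapping $s$ and $t$. By Lemma \ref{lem: s,t <= 0}, the minimizers satisfy $s,t \le 0 \le 1$, so this restriction costs nothing. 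I would double-check in the appendix that the correspondence is indeed reversible, i.e.\ that every minimizer comes from some optimal configuration. I expect this to be the only delicate point.

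Next we compute $|\sum z_j|$ for optimal configurations. By Theorem \ref{theorem: optimal config d=2}, after permutation and normalizing $\sum|z_k| = 1$, we have $z_1=\dots=z_{n-2}$ with $|z_1| = \frac{1}{n+1}$ and $z_1+z_{n-1}+z_n=0$. Hence $\sum z_j = (n-3)z_1$, so $|\sum z_j| = \frac{n-3}{n+1}$. Substituting with $\frac{1}{\mu} = \frac{n+1}{4}$ gives
\[
s = 1 - \frac{n+1}{4}\cdot\frac{4}{n+1} = 0, \qquad t = 1 - \frac{n+1}{4}\cdot\frac{2n-2}{n+1} = \frac{3-n}{2}.
\]
Up to swapping, these are the only minimizers. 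They are attained, for instance, by the spectrum of $\frac12 G(n-1,2)$ extended by a zero, as in Lemma \ref{lem: s,t <= 0}.

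For non-convexity, note that both $(0,\frac{3-n}{2})$ and $(\frac{3-n}{2},0)$ lie in $\mathcal{L}_{n,2}$. Their midpoint $(\frac{3-n}{4},\frac{3-n}{4})$ also has coordinate sum $\frac{3-n}{2}$. It is not one of the two minimizers because $n\ge4$ gives $\frac{3-n}{2}\neq 0$. So the midpoint is not in $\mathcal{L}_{n,2}$, in the spirit of Remark \ref{rem: strict implies nonconvex}.

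The same argument shows that every point of the open segment between the two minimizers lies outside $\mathcal{L}_{n,2}$, since each such point has $s+t=\frac{3-n}{2}$ but is not a minimizer. Non-convexity of $\lambda(\S^{n,2})$ follows because $\mathcal{L}_{n,2}$ is its slice by an affine subspace, and a slice of a convex set is convex.
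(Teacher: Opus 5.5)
Your proposal is correct and follows the paper's proof essentially verbatim. You invoke Proposition \ref{prop: equivalences} with $\mu_{n,2}^{\max}=\frac{4}{n+1}$, use Theorem \ref{theorem: optimal config d=2} to get $|\sum_j z_j|=\frac{n-3}{n+1}$, read off $(s,t)=(0,\frac{3-n}{2})$ up to swapping, and conclude that the open segment misses $\mathcal{L}_{n,2}$. The direction you flag as delicate (every minimizer comes from an optimal configuration) is exactly what the paper uses implicitly, and it does hold by the appendix argument: any $M\in\S^{n,2}$ with that spectrum gives $2-(s+t)=\|B\|_F^2\le 1/\alpha$ and $\alpha\ge\frac12\mu_{n,2}^{\max}$, so $s+t=\mu_{n,2}^{\tr}$ forces both inequalities to be equalities.
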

\begin{proof}
By Proposition \ref{prop: equivalences} for an optimal solution $(s,t)$ we have $s=1-\frac{n+1}{4}(1-|\sum_{j=1}^n z_j|)$ and $t=1-\frac{n+1}{4}(1+|\sum_{j=1}^n z_j|)$,
up to permutation, since $\mu_{n,2}^{\max}=\frac{4}{n+1}$ by Theorem \ref{theorem: minmax d=2}. 
To explicitly calculate the value of $|\sum_{j=1}^n z_j |$ we use the fact that every optimal configuration satisfies $z_1= \dots = z_{n-2}$, $|z_1|=\frac{1}{n+1}$ and $z_1+z_{n-1}+z_n=0$ by Theorem \ref{theorem: optimal config d=2}. From this, we have $\sum_{j=1}^n z_j = (n-3)z_1$ and $| \sum_{j=1}^n z_j | = \frac{n-3}{n+1}$. We conclude that $s=0$ and $t=\frac{3-n}{2}$ holds up to permutation. 
In particular, the relative interior of the line segment between $(0,\frac{3-n}{2})$ and $(\frac{3-n}{2},0)$ is not contained in $\mathcal{L}_{n, 2}$. 
\end{proof}

Using Proposition \ref{prop: 2nd 4th quadrant}, we now describe the convex hull of $\mathcal{L}_{n,2}$ and its boundary curves in the second and fourth quadrant.

\begin{proposition}\label{prop: 2nd 4th quadrant}
Let $n \geq 4$. All points $(s,t)$ in the second and fourth quadrants satisfying $e_2(1,\dots,1,s,t)=0$ belong to the boundary of $\mathcal{L}_{n,2}$.
\end{proposition}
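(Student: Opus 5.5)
The strategy is to show two things. First, every point $(s,t)$ with $st<0$ and $e_2(1,\dots,1,s,t)=0$ lies in $\mathcal{L}_{n,2}$. Second, such points cannot be interior. The second part is immediate: $\mathcal{L}_{n,2}\subseteq \{(s,t)\suchthat e_2(1,\dots,1,s,t)\ge 0\}$ because $\lambda(\S^{n,2})\subseteq H(e_2)$. Any neighborhood of a point on the curve $e_2=0$ contains points with $e_2<0$, since the gradient of $e_2(1,\dots,1,s,t)$ in $(s,t)$ is $(n-2+t,\,n-2+s)$. This gradient does not vanish there: if $s=t=2-n$ then $st>0$, contradicting the quadrant assumption. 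So the real content is membership. By symmetry in $s,t$, assume $s>0>t$.

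For membership, I would construct an explicit matrix. Write $\lambda=(1,\dots,1,s,t)$ with $e_2(\lambda)=0$, which reads
\[
\binom{n-2}{2}+(n-2)(s+t)+st=0.
\]
I would look for $M$ as a diagonal scaling of the matrix $G(n-1,2)=2I-\mathbf{1}\mathbf{1}^\top$ bordered appropriately, or more directly as $M=DG(n,2)D$ with $D=\diag(\alpha,\dots,\alpha,\beta,\gamma)$. Each such matrix is $2$-locally psd by the preliminaries, since $G(n,2)$ has all $2\times 2$ principal minors equal to $0$. Consequently every such $M$ satisfies $e_2(M)=\sum |M_{\{i,j\}}|=0$, so its spectrum automatically lies on the hypersurface $e_2=0$.

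The point is then to choose the scaling so that the spectrum is exactly $\lambda$. Note that $DG(n,2)D=2D^2-(D\mathbf{1})(D\mathbf{1})^\top$. With the first $n-2$ diagonal entries equal to $\alpha$, the space of vectors supported on those coordinates and orthogonal to $\mathbf{1}$ is invariant. On it, $M$ acts as $2\alpha^2$, giving eigenvalue $2\alpha^2$ with multiplicity $n-3$. Choosing $\alpha^2=1/2$ yields $n-3$ eigenvalues equal to $1$.

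The remaining three eigenvalues come from the $3\times 3$ compression onto $\mathrm{span}\{\mathbf{1}_{[n-2]},e_{n-1},e_n\}$. These must be $1,s,t$, which gives two equations in the unknowns $\beta^2,\gamma^2$: the trace, and the sum of $2\times 2$ minors (or equivalently a matching of characteristic polynomial coefficients). One of the three relations is automatic from $e_2=0$. The remaining equations should be solvable, with positive $\beta^2,\gamma^2$, exactly when $st<0$ on the curve, or possibly using an extra overall positive scalar multiple to adjust normalization.

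The main obstacle is verifying this solvability and positivity. If the single scaling family is too rigid, I would instead use a general diagonal scaling with $n-2$ equal entries plus a positive multiple. Alternatively, I would parametrize via Proposition \ref{prop: equivalences}'s viewpoint, taking $I-M=B^\top B$ with $B$ of rank two and checking that all $2\times 2$ blocks are psd.
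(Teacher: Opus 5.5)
Your approach is the paper's: realize $(1,\dots,1,s,t)$ as the spectrum of a diagonal scaling $DG(n,2)D=2D^2-(D\mathbf 1)(D\mathbf 1)^\top$. Such a matrix is automatically $2$-locally psd with $e_2=0$. Your argument that these points cannot be interior, via $\lambda(\S^{n,2})\subseteq H(e_2)$ and the nonvanishing gradient $(n-2+t,\,n-2+s)$, is correct; the paper leaves that part implicit.

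However, you stop at the only substantive step. You leave solvability and positivity as ``should be solvable'', so membership is not actually proved as written. It closes in a few lines. With $D=\diag(\alpha,\dots,\alpha,\beta,\gamma)$ and $\alpha^2=\tfrac12$, the matrix determinant lemma gives
\[\tr M=\tr D^2=\tfrac{n-2}{2}+\beta^2+\gamma^2,\qquad \det M=2^n\det(D)^2\bigl(1-\tfrac n2\bigr)=-2(n-2)\beta^2\gamma^2.\]
Match these with $n-2+s+t$ and $st$, and use $e_2=0$ in the form $s+t=-\tfrac{n-3}{2}-\tfrac{st}{n-2}$. Setting $x:=-\tfrac{st}{n-2}$, this gives $\beta^2+\gamma^2=\tfrac12+x$ and $\beta^2\gamma^2=\tfrac x2$. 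Hence $\{\beta^2,\gamma^2\}=\{\tfrac12,x\}$, which is admissible exactly when $st<0$.

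Two small corrections to your setup. First, the two independent equations are the trace and the determinant. The $2\times 2$-minor equation is the one that is automatic: since $e_2(M)=0=e_2(1,\dots,1,s,t)$ and $n-3$ eigenvalues are already $1$, matching the trace forces it. Second, your family is not too rigid, but it is no more flexible than needed. It collapses to the paper's choice $D=\diag\bigl(\tfrac1{\sqrt2},\dots,\tfrac1{\sqrt2},\sqrt x\bigr)$ with $n-1$ equal entries. For that choice the paper computes directly that $\det(\lambda I-M)=(\lambda-1)^{n-2}(\lambda-s)(\lambda-t)$.
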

\begin{proof}
In fact, every point on this boundary can be obtained by diagonal scaling from the matrix $G(n, 2) \in \S^{n, 2}$ (recall \eqref{eq: G(n,d)}).
Assume that 
\[
e_2(1,\dots,1,s,t) = \binom{n-2}2+(n-2)(s+t)+st=0
\]
and let 
\[M= DG(n, 2)D = D(2I - \mathbf{1}\mathbf{1}^\top)D = 2D^2-vv^\top,\] 
where 
\[
x=\frac{-st}{n-2}, \quad D=\diag\left(\frac{1}{\sqrt{2}},\ldots, \frac{1}{\sqrt{2}}, \sqrt{x}\right), \quad v=D\mathbf{1}=\left(\frac{1}{\sqrt{2}}, \ldots, \frac{1}{\sqrt{2}}, \sqrt{x}\right).
\]

Computing $\det(\lambda I-M)$ using the matrix determinant lemma, we have 
\begin{align*}
    \det(\lambda I-M)&=\det(\lambda I-2D^2)(1+v^\top(\lambda I-2D^2)^{-1}v)\\
    &=(\lambda-1)^{n-1}(\lambda-2x)\left(1+\frac{n-1}{2(\lambda-1)}+\frac{x}{\lambda-2x}\right)\\
    &=(\lambda-1)^{n-2}(\lambda-s)(\lambda-t)~.
\end{align*}
\end{proof}

\begin{theorem}\label{theorem: convex hull Ln2}
    For $n \geq 4$, the convex hull of the set $\mathcal{L}_{n,2}$ is given by the inequalities 
    \begin{align*}
        e_2(1,\dots,1,s,t)\ge0, \quad e_1(1,\dots,1,s,t)\ge\frac{n-1}2.
    \end{align*}
\end{theorem}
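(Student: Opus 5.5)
Write $K$ for the set cut out by $e_2(1,\dots,1,s,t)\ge0$ and $e_1(1,\dots,1,s,t)\ge\frac{n-1}{2}$, i.e.\ $s+t\ge\frac{3-n}{2}$. The plan is to prove the two inclusions $\operatorname{conv}(\mathcal{L}_{n,2})\subseteq K$ and $K\subseteq\operatorname{conv}(\mathcal{L}_{n,2})$ separately.

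\emph{Step 1: $K$ is convex and contains $\mathcal{L}_{n,2}$.} The slice of $H(e_2)$ by the affine plane $\{(1,\dots,1,s,t)\}$ is convex. On $\mathcal{L}_{n,2}$ it is exactly $\{e_1\ge0,\ e_2\ge0\}$, since $\lambda(\S^{n,2})\subseteq H(e_2)$. The half-plane $s+t\ge\frac{3-n}{2}$ is convex and contains $\mathcal{L}_{n,2}$ by Theorem \ref{theorem: minmax d=2}. This constraint also implies $e_1\ge0$, so $K$ is an intersection of convex sets containing $\mathcal{L}_{n,2}$, and hence $\operatorname{conv}(\mathcal{L}_{n,2})\subseteq K$.

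\emph{Step 2: describe $K$ geometrically.} The curve $e_2=0$ is the hyperbola
\[
\left(s+\binom{n-2}{1}\right)\left(t+(n-2)\right)=(n-2)^2-\binom{n-2}{2},
\]
and $K$ lies on the side of its upper branch, which is a convex region. The line $s+t=\frac{3-n}{2}$ meets this branch exactly at $(0,\frac{3-n}{2})$ and $(\frac{3-n}{2},0)$. To check this, plug in $s=0$: then $e_2=\binom{n-2}{2}+(n-2)t=0$ gives $t=-\frac{n-3}{2}$. So $\partial K$ consists of three pieces: the segment $P_1P_2$ between these two points, and two arcs of the hyperbola going off into the second and fourth quadrants. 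Along these arcs one coordinate is positive and the other is at most $\frac{3-n}{2}$; this should be checked using convexity of the branch.

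\emph{Step 3: $K\subseteq\operatorname{conv}(\mathcal{L}_{n,2})$.} The two points $P_1,P_2$ lie in $\mathcal{L}_{n,2}$ by Corollary \ref{corollary: nonconvexity}. By Proposition \ref{prop: 2nd 4th quadrant}, the two hyperbola arcs in the second and fourth quadrants lie in $\mathcal{L}_{n,2}$. The nonnegative quadrant also lies in $\mathcal{L}_{n,2}$, since $\mathcal{L}_{n,2}$ is star-shaped with respect to it.

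Now take any $p\in K$. Move it in the direction $-(1,1)$, or alternatively in the direction $(1,1)$, which is a recession direction of $K$ since $\RR^2_{\ge0}$ is. The line through $p$ with direction $(1,1)$ meets $\partial K$ in one point $q$. This point lies either on one of the arcs, and is then in $\mathcal{L}_{n,2}$, or on the segment $P_1P_2$, and is then in $\operatorname{conv}\{P_1,P_2\}$. Every point $q+\tau(1,1)$ with $\tau\ge0$ corresponds to the spectrum $\lambda+\tau(0,\dots,0,1,1)$. For an arc point this lies in $\mathcal{L}_{n,2}$ by star-shapedness, since we add a nonnegative vector. For a segment point, adding $\tau(1,1)$ to $P_1$ and $P_2$ separately keeps both in $\mathcal{L}_{n,2}$ by star-shapedness, and $q+\tau(1,1)$ is the corresponding convex combination. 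Hence $p\in\operatorname{conv}(\mathcal{L}_{n,2})$.

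\emph{Main obstacle.} The expected difficulty is Step 2: verifying that the arcs from Proposition \ref{prop: 2nd 4th quadrant} are exactly the portions of $\partial K$ beyond $P_1$ and $P_2$. One also has to check that the line in direction $(1,1)$ through every $p\in K$ exits through this boundary rather than escaping to infinity. This reduces to the elementary observation that $e_2(1,\dots,1,s,t)$ is negative for $s+t$ sufficiently negative along each such line, because $st$ is bounded along it. The rest is bookkeeping with the hyperbola.
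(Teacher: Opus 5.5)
Your proof is correct, but it is organized differently from the paper's. The paper argues quadrant by quadrant. The first quadrant is trivially in $\mathcal{L}_{n,2}$. In the second and fourth quadrants, Proposition~\ref{prop: 2nd 4th quadrant} puts the curve $e_2=0$ into $\mathcal{L}_{n,2}$, and convexity of the $H(e_2)$-slice within each quadrant plus star-shapedness fills in the rest, so there $\mathcal{L}_{n,2}$ coincides with the hyperbolic relaxation. Finally, Theorem~\ref{theorem: minmax d=2} supplies $s+t\ge\frac{3-n}{2}$, and the third-quadrant part of $K$ is, implicitly, the triangle $\operatorname{conv}\{(0,0),P_1,P_2\}$.

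You instead sweep each $p\in K$ along $-(1,1)$ to $\partial K$ and transport back by star-shapedness. This requires your explicit description of $\partial K$, which the paper never writes down. In return, you make the third-quadrant step explicit, where the paper only says ``tightest linear inequality.'' In the other direction, the sweep proves less about $\mathcal{L}_{n,2}$ itself. For $n=4$, the fourth-quadrant point $(0.01,-0.3)$ sweeps onto the interior of $P_1P_2$, so you only place it in the hull, while the paper's argument shows it lies in $\mathcal{L}_{n,2}$.

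Your Step~2 becomes a one-liner once you note that on the line $s+t=\frac{3-n}{2}$ one has $e_2(1,\dots,1,s,t)=st$. This gives the exact intersection points with the hyperbola, and shows $K\cap\{s+t=\frac{3-n}{2}\}$ is precisely the segment $P_1P_2$.

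One correction to your closing remark: along a line of direction $(1,1)$ the product $st$ is unbounded. In fact $e_2(1,\dots,1,s,t)\to+\infty$ as $s+t\to-\infty$; this is the lower branch of the hyperbola. The fact you need, that each such line meets $K$ in a ray bounded below, follows directly from the defining inequality $s+t\ge\frac{3-n}{2}$. So only that justification is wrong, not the argument itself.
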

\begin{proof} 
    The first quadrant is clearly contained in $\mathcal{L}_{n,2}$ and the set $\mathcal{L}_{n,2}$ is star-shaped with respect to $\RR_{\ge 0}^n$.
    Any $(s,t) \in \mathcal{L}_{n,2}$ must satisfy $e_2(1,\ldots,1,s,t) \geq 0$, and by Proposition \ref{prop: 2nd 4th quadrant}, the points in the second and fourth quadrants with $e_2(1,\dots,1,s,t)=0$ are in $\mathcal{L}_{n,2}$. Since the set of all $(s,t)$ such that $e_1(1,\ldots,1,s,t) \geq 0$ and $e_2(1,\ldots,1,s,t) \geq 0$ is convex, its intersection with each quadrant is also convex. Together with star-shapedness, this shows that all points in the second and fourth quadrants satisfying these inequalities lie in $\mathcal{L}_{n,2}$. Finally, Theorem \ref{theorem: minmax d=2} gives $s+t\ge\frac{3-n}2$, so this is the tightest linear inequality in the third quadrant. 
\end{proof}

\subsection{Non-convexity of \texorpdfstring{$\mathcal{L}_{n,n-2}$}{Ln,n-2}}
For $n\geq 4$, we prove that the optimal configurations attaining $\pi_{n,n-2}^{\max}$ are the vertices of regular $n$-gons centered about the origin. This shows that $\pi_{n,n-2}^{\max} > \mu_{n,n-2}^{\max}$, which implies non-convexity of the sets $\mathcal{L}_{n,n-2}$ and $\lambda(\S^{n,n-2})$ by Remark \ref{rem: strict implies nonconvex}.

\begin{theorem}\label{thm: optimality of regular n-gon}
For $n \geq 4$, the vertices of a regular $n$-gon centered about the origin are the only optimal configurations for $\pi_{n,n-2}^{\max}$.
\end{theorem}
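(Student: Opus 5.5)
We begin by using the constraint $\sum_{k=1}^n z_k=0$ to rewrite the objective. Fix the normalization $\sum_{k=1}^n|z_k|=1$. For $S\in\binom{[n]}{n-2}$ with complement $\{i,j\}$ we have $\sum_{s\in S}z_s=-(z_i+z_j)$, so
\[
\sum_{s\in S}|z_s|+\Bigl|\sum_{s\in S}z_s\Bigr| \;=\; 1-g(i,j),\qquad g(i,j):=|z_i|+|z_j|-|z_i+z_j|\ \ge 0 .
\]
Hence computing $\pi^{\max}_{n,n-2}$ amounts to \emph{maximizing} $\min_{i<j} g(i,j)$ over configurations with $\sum|z_k|=1$ and $\sum z_k=0$.

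For the regular $n$-gon with $|z_k|=1/n$, the smallest deficiency occurs between neighbours: $g=\frac2n\bigl(1-\cos\frac\pi n\bigr)=:c_n$. So the theorem amounts to two claims:
\begin{enumerate}
\item every admissible configuration has some pair with $g(i,j)\le c_n$;
\item equality holds only for the regular $n$-gon.
\end{enumerate}
It suffices to consider consecutive pairs. Order the $z_k$ by argument, with $r_k=|z_k|$ and consecutive angles $\gamma_k$. Since $\sum z_k=0$, the points do not lie in an open half-plane. Hence every $\gamma_k\le\pi$ and $\sum_k\gamma_k=2\pi$. We may discard zero $z_k$: if some $z_m=0$, any pair $(m,j)$ gives $g=0$.

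The key pointwise estimate comes from the identity $|z_k+z_{k+1}|^2=(r_k+r_{k+1})^2\cos^2\frac{\gamma_k}2+(r_k-r_{k+1})^2\sin^2\frac{\gamma_k}2$. It yields
\[
g(k,k+1)\;\le\; w_k\Bigl(1-\cos\tfrac{\gamma_k}{2}\Bigr),\qquad w_k:=r_k+r_{k+1},
\]
with equality iff $r_k=r_{k+1}$ (or $\gamma_k=\pi$). Note that $\sum_k w_k=2$.

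Now suppose, for contradiction, that $g(k,k+1)\ge c_n$ for all $k$. Then $1-\cos\frac{\gamma_k}2\ge c_n/w_k$. If $c_n/w_k>1$ this is already impossible since $\gamma_k\le\pi$. Otherwise $\gamma_k\ge \psi(w_k)$, where
\[
\psi(w):=2\arccos\bigl(1-c_n/w\bigr).
\]
If $\psi$ is convex on the relevant range $w\ge c_n$, Jensen with $\sum w_k=2$ gives
\[
2\pi=\sum_k\gamma_k\ \ge\ \sum_k\psi(w_k)\ \ge\ n\,\psi(2/n)=n\cdot 2\arccos\bigl(\cos\tfrac\pi n\bigr)=2\pi .
\]
So equality must hold throughout. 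Strict convexity of $\psi$ forces all $w_k=2/n$. Equality in the pointwise estimate forces $r_k=r_{k+1}$ for all $k$, so all $r_k=1/n$. Equality in $\gamma_k\ge\psi(w_k)$ forces $\gamma_k=2\pi/n$. This is exactly the regular $n$-gon centred at the origin. Conversely, the regular $n$-gon attains $\min g=c_n$, as the neighbour computation shows, and it satisfies $\sum z_k=0$. Both claims follow.

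The main obstacle is verifying the convexity of $\psi(w)=2\arccos(1-c_n/w)$ on $[c_n,\infty)$, or at least on the range that can occur. Heuristically, for small $y=c_n/w$ we have $\arccos(1-y)\approx\sqrt{2y}$, so $\psi\approx 2\sqrt{2c_n/w}$, which is convex. I would first compute $\psi''$ directly: with $y=c_n/w$ one has $\psi'(w)=-\frac{2c_n}{w^2}(2y-y^2)^{-1/2}$, and I expect $\psi''\ge0$ for $y\in(0,1]$ to reduce to an elementary polynomial inequality in $y$.

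If convexity fails near $y=1$, I would treat the large-$y$ case separately. There $\gamma_k$ is close to $\pi$, which leaves too little total angle for the remaining gaps. Alternatively, I would replace Jensen by the tangent-line bound $\psi(w)\ge\psi(2/n)+\psi'(2/n)(w-2/n)$. Summing this bound uses only $\sum_k(w_k-2/n)=0$, and it needs checking only on the admissible interval of $w$.

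Finally, $\pi^{\max}_{n,n-2}=1-c_n$, so it remains to compare this with $\mu^{\max}_{n,n-2}$. By Remark \ref{rem: sharp?}, $\mu^{\max}_{n,n-2}\le\frac{n-2}{n-3/2}$, and it suffices to show $1-c_n$ exceeds this bound. This comparison is a routine trigonometric check, left to the subsequent theorem.
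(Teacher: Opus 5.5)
Your argument is correct and is essentially the paper's proof. It uses the same reduction to maximizing $\min_{i<j} g(i,j)$ and the same pointwise bound $g(k,k+1)\le w_k\bigl(1-\cos\frac{\gamma_k}{2}\bigr)$. The only difference is the Jensen step: the paper applies Jensen to $f(\gamma)=\bigl(1-\cos\frac{\gamma}{2}\bigr)^{-1}$ over the angles, using $\sum_k\gamma_k=2\pi$, while you apply it over the weights, using $\sum_k w_k=2$, to your $\psi$, which is exactly the inverse of the decreasing function $c_n f$.

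The convexity you flag as the main obstacle is therefore immediate, since the inverse of a decreasing convex function is convex. Directly, $\psi'(w)=-\frac{2y^{3/2}}{c_n\sqrt{2-y}}$ with $y=c_n/w$ is strictly increasing in $w$, so no fallback is needed.

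One small correction: equality in your pointwise bound holds if and only if $r_k=r_{k+1}$ or $\gamma_k=0$, not $\gamma_k=\pi$. This is harmless, because $\gamma_k=2\pi/n$ in the equality case.
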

\begin{proof}
Let $\mathcal{Z}_n := \{z \in \CC^n \, \suchthat \, \sum_{k=1}^n |z_k| = 1, \sum_{k=1}^n z_k=0\} $ denote the feasible set of the optimization problem for $\pi_{n,n-2}^{\max}$. 
Observe that for $S = [n] \setminus \{i, j\}$ we have $\sum_{s \in S}z_s = -(z_i+z_j)$ and thus 
\[\left| \sum_{s \in S}z_s \right|= |z_i+z_j|, \quad \sum_{s \in S}|z_s| = 1-(|z_i|+|z_j|).\] 
Therefore, we want to find a configuration optimizing
    \begin{align} 
    &\min_{z \in \mathcal{Z}_n}\,\max_{i \neq j}\,   1- \left( |z_i|+|z_j| - |z_i+z_j|  \right)  \nonumber \\ 
     = &\; 1-\max_{z \in \mathcal{Z}_n}\,\min_{i \neq j}\, |z_i|+|z_j| - |z_i+z_j|.    \nonumber 
    \end{align} Write 
    \[
    \Delta(z_i,z_j):= |z_i|+|z_j| - |z_i+z_j|, \quad C(z) := \min_{i \neq j} \Delta(z_i,z_j).
    \]
Our goal is to maximize $C(z)$ subject to $z \in \mathcal{Z}_n$. Let\[z_{\operatorname{reg}}=\left(\frac{1}{n},\,\frac{1}{n}\mathrm{e}^{\mathrm{i}\frac{2\pi}{n}}, \,\frac{1}{n}\mathrm{e}^{\mathrm{i}\frac{2\pi \cdot 2}{n}},\,\dots,\,\frac{1}{n}\mathrm{e}^{\mathrm{i}\frac{2\pi (n-1)}{n}}\right)\]
denote the vertices of a regular $n$-gon. Since $C(z)$ is invariant under rotations and permutations of vertices, every regular $n$-gon configuration yields the same value of $C(z)$ as $C(z_{\operatorname{reg}})$. 

We now compute $C(z_{\operatorname{reg}})$. If the angle between $z_i$ and $z_j$ is $\phi=\frac{2\pi m}{n}$ for some $1 \leq m \leq \frac{n}{2}$, then  
\[ \Delta(z_i,z_{j}) = \frac{2}{n}-\frac{1}{n}\left|1+\mathrm{e}^{\mathrm{i}\frac{2\pi m}{n}}\right|= \frac{2}{n}\left(1-\cos\frac{\pi m}{n}\right)\]
and this is minimized when $m=1$, so $C(z_{\operatorname{reg}})=\frac{2}{n}\left(1-\cos\frac{\pi }{n}\right)$.

It remains to prove that $C(z) \leq C(z_{\operatorname{reg}})$ for every configuration $z$. If any $z_i$ is zero, say $z_1=0$, then $\Delta(z_1,z_j) =0$ for all $2 \leq j \leq n$. Also, if $\arg (z_i)= \arg(z_j)$ for some $i \neq j$, then we have $\Delta(z_i,z_j)=0$. This shows that we may assume $z_1, \ldots, z_n$ are all nonzero and have pairwise distinct arguments, since otherwise $C(z)=0< C(z_{\operatorname{reg}})$.

Without loss of generality we assume that $z_1,\dots,z_n$ are ordered by strictly increasing arguments, i.e., $z_k = |z_k| \mathrm{e}^{\mathrm{i} \phi_k}$ and $0 \leq \phi_1 < \dots < \phi_n < 2\pi$.
Let $\gamma_k \in [0,2\pi)$ denote the angle between $z_k$ and $z_{k+1}$. Then $\sum_{k=1}^n \gamma_k = 2\pi$.
We observe that $\max_{1 \leq k \leq n} \gamma_k \leq \pi$ holds; otherwise, after rotating, we may assume that all points $z_k$ lie strictly in the right half-plane, i.e., $\operatorname{Re}(z_k)> 0$ which contradicts our assumption that $0=\sum_{k=1}^n \operatorname{Re}(z_k) + \mathrm{i} \operatorname{Im}(z_k)$.
Therefore we have $\gamma_k\in (0,\pi]$ and thus $\cos \frac{\gamma_k}{2} \geq 0$.

We write $R_k:=|z_k|+|z_{k+1}|$ and observe that
\[ 
|z_k+z_{k+1}|= \sqrt{|z_k|^2+|z_{k+1}|^2+2|z_k| |z_{k+1}| \cos \gamma_k} = \sqrt{R_k^2-2|z_k| |z_{k+1}| (1-\cos \gamma_k)}.
\]
By the AM-GM inequality we have $|z_k| |z_{k+1}| \leq \frac{R_k^2}{4}$ with equality if and only if $|z_k|=|z_{k+1}|$ (since $z_i \neq 0$ for all $i$). Thus, we obtain 
\begin{align}\label{eq: am-gm for regular n-gon}
|z_k+z_{k+1}|\geq \sqrt{R_k^2 \cdot \frac{1+\cos \gamma_k}{2}} = \sqrt{R_k^2 \cos^2 \frac{\gamma_k}{2} }= R_k \cos \frac{\gamma_k}{2},    
\end{align}
since $\cos \frac{\gamma_k}{2} \geq 0 $. 
Therefore $\Delta(z_k,z_{k+1}) \leq R_k \left( 1-\cos  \frac{\gamma_k}{2} \right)$ and we obtain the chain of inequalities
\[ 0 < C(z) \leq \Delta (z_k,z_{k+1}) \leq R_k\left( 1-\cos \frac{\gamma_k}{2} \right)\]
which implies that
$C(z)\left(1-\cos \frac{\gamma_k}{2}\right)^{-1} \leq R_k$
and therefore 
\begin{align} \label{ineq: 10}
    C(z) \sum_{k=1}^n \left(1-\cos \frac{\gamma_k}{2}\right)^{-1} \leq \sum_{k=1}^nR_k = \sum_{k=1}^n |z_k|+|z_{k+1}| = 2 \sum_{k=1}^n |z_k|=2.
\end{align}
The function $f(x):=(1-\cos \frac{x}{2})^{-1}$ is strictly convex on the interval $(0,\pi]$. Since $\gamma_k \in (0,\pi]$ and $\frac{1}{n}\sum_{k=1}^n \gamma_k = \frac{2\pi}{n} \leq \frac{\pi}{2}$, applying Jensen's inequality on $f(x)$ gives
\[\sum_{k=1}^n  \left(1-\cos \frac{\gamma_k}{2}\right)^{-1}= \sum_{k=1}^n f(\gamma_k) \geq n f\left(\frac{1}{n}\sum_{k=1}^n \gamma_k \right) = n f\left(\frac{2\pi}{n} \right) = n\left(1-\cos\frac{\pi}{n}\right)^{-1}.\]
Together with \eqref{ineq: 10}, this implies that
\[C(z) \leq \frac{2}{n}\left(1-\cos\frac{\pi}{n}\right)= C(z_{\operatorname{reg}}).\]
Since $f(x)$ is strictly convex, equality holds in Jensen's inequality if and only if $\gamma_1=\dots=\gamma_n$, i.e., $\gamma_k = \frac{2\pi}{n}$ for all $k$. Equality in \eqref{eq: am-gm for regular n-gon} holds by the AM-GM inequality if and only if $|z_k|=|z_{k+1}|$ for all $k$, and thus $|z_k|=\frac{1}{n}$. Hence, $C(z)$ attains the maximum precisely when the configuration is a regular $n$-gon centered about the origin.
\end{proof}

We now establish non-convexity of the symmetric spectra $\lambda(\S^{n, n-2})$ for $n \geq 4$.

\begin{theorem}\label{thm: non-convexity d=n-2} The set $\mathcal{L}_{n, n-2}$ is not convex for all $n \geq 4$. Consequently, the set $\lambda(\S^{n, n-2})$ is not convex for all $n \geq 4$.
\end{theorem}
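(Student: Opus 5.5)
The plan is to apply Remark \ref{rem: strict implies nonconvex} with $d=n-2$. That is, I will show the strict inequality $\pi_{n,n-2}^{\max} > \mu_{n,n-2}^{\max}$ by computing an exact value for $\pi_{n,n-2}^{\max}$ and comparing it with an upper bound for $\mu_{n,n-2}^{\max}$. Both ingredients are already available: Theorem \ref{thm: optimality of regular n-gon} gives the exact value, and Remark \ref{rem: sharp?} gives the bound.

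\emph{The exact value.} In the proof of Theorem \ref{thm: optimality of regular n-gon} we rewrote the objective so that
\[
\pi_{n,n-2}^{\max} = 1 - \max_{z\in\mathcal{Z}_n} C(z) = 1 - C(z_{\operatorname{reg}}) = 1 - \frac{2}{n}\Bigl(1-\cos\frac{\pi}{n}\Bigr).
\]

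\emph{The upper bound.} The explicit configurations \eqref{eq: configs} give, via Remark \ref{rem: sharp?} with $d=n-2$ (admissible since $2\le d\le n-2$ for $n\ge4$),
\[
\mu_{n,n-2}^{\max}\le \frac{2(n-2)}{2n-3} = 1-\frac{1}{2n-3}.
\]

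\emph{The comparison.} Given these two facts, the strict inequality $\pi_{n,n-2}^{\max}>\mu_{n,n-2}^{\max}$ follows once
\[
\frac{2}{n}\Bigl(1-\cos\frac{\pi}{n}\Bigr) < \frac{1}{2n-3}.
\]
Using $1-\cos x\le x^2/2$, the left side is at most $\pi^2/n^3$, so it suffices to prove $\pi^2(2n-3)<n^3$ for all $n\ge 4$.
\begin{itemize}
\item At $n=4$ this reads $5\pi^2\approx 49.3<64$.
\item The function $g(n)=n^3-2\pi^2 n+3\pi^2$ has derivative $3n^2-2\pi^2>0$ for $n\ge 3$, so $g$ is increasing on $[4,\infty)$ and stays positive.
\end{itemize}
This elementary estimate is the only step requiring care. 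The one thing to check is that the quadratic bound on $1-\cos$ is not too lossy at $n=4$, and the margin above shows it is not.

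With the strict inequality established, Remark \ref{rem: strict implies nonconvex} applies directly. By Proposition \ref{prop: equivalences}, every minimizer $(s,t)$ of $s+t$ over $\mathcal{L}_{n,n-2}$ has $s\neq t$. By symmetry $(t,s)$ is then also a minimizer, so their midpoint would have the same value of $s+t$ but equal coordinates, and hence cannot lie in $\mathcal{L}_{n,n-2}$. Thus $\mathcal{L}_{n,n-2}$ is not convex.

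Finally, $\mathcal{L}_{n,n-2}$ is the preimage of $\lambda(\S^{n,n-2})$ under the affine map $(s,t)\mapsto(1,\dots,1,s,t)$. If $\lambda(\S^{n,n-2})$ were convex, this preimage would be convex too. Therefore $\lambda(\S^{n,n-2})$ is not convex for all $n\ge4$.
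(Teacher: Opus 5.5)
Your proposal is correct and follows the paper's proof essentially step for step. You take the exact value of $\pi_{n,n-2}^{\max}$ from Theorem \ref{thm: optimality of regular n-gon} and the bound $\mu_{n,n-2}^{\max}\le\frac{2n-4}{2n-3}$ from Remark \ref{rem: sharp?}, reduce via the quadratic bound on $1-\cos\frac{\pi}{n}$ to $n^3>\pi^2(2n-3)$ for $n\ge 4$, and conclude with Remark \ref{rem: strict implies nonconvex}. The only differences are cosmetic: you check the final inequality via the derivative of $n^3-2\pi^2 n+3\pi^2$ rather than monotonicity of $x^3/(2x-3)$, and you spell out the midpoint and affine-preimage arguments that the paper leaves to the remark.
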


\begin{proof}
Let $n \geq 4$. Our goal is to show that $\pi_{n,n-2}^{\max} > \mu_{n, n - 2}^{\max}$. Theorem \ref{thm: optimality of regular n-gon} and Remark \ref{rem: sharp?} give
\[\pi_{n,n-2}^{\max} = 1-\frac{2}{n}\left( 1- \cos \frac{\pi}{n} \right), \quad \mu_{n, n - 2}^{\max} \leq \frac{2n - 4}{2n - 3},\]
so it suffices to show that
\begin{align*}
1 - \frac{2}{n}\left( 1- \cos \frac{\pi}{n} \right) > \frac{2n-4}{2n-3} \quad \Leftrightarrow \quad 1- \cos \frac{\pi}{n}  < \frac{n}{2(2n-3)}.
\end{align*}
With the half-angle identity and the bounds $-x < \sin x < x $ for all $x > 0$, we obtain 
\[ 1- \cos \frac{\pi}{n} = 2  \sin^2 \frac{\pi}{2n} < \frac{\pi^2}{2n^2}.\]
Therefore, it suffices to prove that
\begin{align*}
\frac{\pi^2}{2n^2} < \frac{n}{2(2n - 3)} \quad
\Leftrightarrow \quad    \frac{n^3}{2n-3} > \pi^2,
\end{align*}
which follows since $\frac{4^3}{2\cdot4-3}>10>\pi^2$ and the function $\frac{x^3}{2x-3}$ is increasing for $x\ge4$.

We have shown that $\pi_{n,n-2}^{\max} > \mu_{n,n-2}^{\max}$. Our conclusion follows from Remark \ref{rem: strict implies nonconvex}.
\end{proof}

\begin{remark}
Similarly to Corollary \ref{corollary: nonconvexity} and its proof we can show a stronger statement: the only points $(s, t) \in \mathcal{L}_{n, n - 2}$ that achieve the value of $\mu_{n,n-2}^{\tr} = -\frac{1}{n-2}$ are $(0, -\frac{1}{n-2})$ and $(-\frac{1}{n-2}, 0)$, which implies that the relative interior of the line segment between these two points misses $\mathcal{L}_{n, n -2}$.
\end{remark}

\section{Conjectures and Open Problems}

We described non-convexity of $\mathcal{L}_{n, 2}$ and $\mathcal{L}_{n,n- 2}$ in the third quadrant for $n \geq 4$, and we completely understand the case $n=4$. For $n \geq 5$ we do not know a full semialgebraic description of these sets. 

A particular point of interest is when $s = t$ on the boundary of $\mathcal{L}_{n, 2}$. As discussed in Proposition \ref{prop: equivalences}, this is equivalent to finding $\pi_{n, 2}^{\max}$ (recall that we established $\pi_{n,n-2}^{\max}$ in Theorem \ref{thm: optimality of regular n-gon}).
We reformulate $\pi_{n, 2}^{\max}$ as an optimization problem of convex $n$-gons in the plane analogously to Nesterenko \cite{nesterenko2024submatrices}.
These extremal problems structurally relate to the problem of finding small polygons (with bounded diameter) of maximal perimeter \cite{Henrion13,Bingane24,Mulansky2025}.  

Let $z_k=r_k\mathrm{e}^{\mathrm{i}\theta_k}$ for $k=1,\dots,n$. We may let $0\le\theta_1\le\theta_2\le\dots\le\theta_n<2\pi$ without loss of generality. Since $\sum_{k=1}^n z_k=0$ we can think of $z_1,\dots,z_n$ as vectors $u_1,\dots,u_n\in\RR^2$ corresponding to consecutive sides of a convex $n$-gon with vertices $A_1,A_2,\dots, A_n$ with $A_1$ at the origin and $u_1=\overrightarrow{A_1A_2},\, u_2=\overrightarrow{A_2A_3},\dots,\, u_{n-1}=\overrightarrow{A_{n-1}A_n},\, u_n=\overrightarrow{A_nA_1}$. In this way, $|u_k|+|u_{k+1}|+|u_k+u_{k+1}|$ is the perimeter of the triangle $A_kA_{k+1}A_{k+2}$ for $k=1,\dots,n$. 

For a polygon $\X$, denote its perimeter by $p(\X)$. We write $\X=X_1\cdots X_n$ to mean that $\X$ has vertices $X_1,\dots,X_n$. Let 
\[
\pi(\X):= \max_{1\le k<l\le n}|\overrightarrow{X_k X_{k+1}}|+|\overrightarrow{X_l X_{l+1}}|+|\overrightarrow{X_k X_{k+1}}+\overrightarrow{X_l X_{l+1}}|~.
\]
For a nondegenerate polygon $\X$, let
\begin{align*}
    \d(\X):=p(\X)^{-1}\pi(\X)~.
\end{align*}

\begin{problem}\label{problem: polygon}
    Let $n\ge5$. Find $\pi_{n,2}^{\max}=\min\{\d(\X)\,\suchthat\, \X \text{ non-trivial convex } n\text{-gon}\}$.
\end{problem}

Observe that the minimum in Problem \ref{problem: polygon} is actually achieved by a polygon, due to compactness of the projective space. We call a convex  $n$-gon that achieves this minimum \emph{optimal}.

\begin{proposition}
    If $\A_n$ is a regular $n$-gon, then $\delta(\A_n)=\frac4n\cos^2\frac{\pi}{2n}$. Consequently, we have $\pi_{n,2}^{\max} \leq \frac4n\cos^2\frac{\pi}{2n}$.
\end{proposition}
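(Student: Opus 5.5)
Take $\A_n$ to be the regular $n$-gon with unit side length, so $p(\A_n)=n$. Its side vectors are $u_k=\mathrm{e}^{\mathrm{i}2\pi k/n}$, $k=1,\dots,n$; these are exactly the vertices of a regular $n$-gon centered at the origin and sum to zero. The quantity $\pi(\A_n)$ is the maximum over pairs $k<l$ of $|u_k|+|u_l|+|u_k+u_l| = 2+|u_k+u_l|$. If the angle between $u_k$ and $u_l$ is $\phi=\frac{2\pi m}{n}$ with $1\le m\le \frac n2$, then, as in the proof of Theorem \ref{thm: optimality of regular n-gon},
\[
|u_k+u_l|=\left|1+\mathrm{e}^{\mathrm{i}2\pi m/n}\right| = 2\cos\frac{\pi m}{n}.
\]
This is decreasing in $m$ on $[1,\frac n2]$, so the maximum is attained at $m=1$, i.e. for consecutive sides.

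For $m=1$ we get $\pi(\A_n)=2+2\cos\frac{\pi}{n}=4\cos^2\frac{\pi}{2n}$ by the half-angle identity. Therefore
\[
\d(\A_n)=\frac{4}{n}\cos^2\frac{\pi}{2n}.
\]
The regular $n$-gon is a non-trivial convex $n$-gon, hence feasible in Problem \ref{problem: polygon}, and this gives $\pi_{n,2}^{\max}\le \frac4n\cos^2\frac{\pi}{2n}$.

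To be careful about the consequence, I would check that Problem \ref{problem: polygon} really computes $\pi_{n,2}^{\max}$ as defined in Proposition \ref{prop: equivalences} with $d=2$. Via the correspondence $z_k\leftrightarrow u_k$ described before the problem, the normalization $\sum|z_k|=1$ becomes division by the perimeter, and the constraint $\sum z_k=0$ is the closure of the polygon. Alternatively, one can plug $z_k=\frac1n\mathrm{e}^{\mathrm{i}2\pi k/n}$ directly into the definition of $\pi_{n,2}^{\max}$: it satisfies $\sum_k|z_k|=1$ and $\sum_k z_k=0$, and the same computation yields the value $\frac4n\cos^2\frac{\pi}{2n}$.

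There is no real obstacle here. The only point needing care is the monotonicity of $\cos\frac{\pi m}{n}$ over $1\le m\le n/2$, which holds because $\frac{\pi m}{n}\in(0,\frac{\pi}{2}]$ on that range.
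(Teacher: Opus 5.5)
Your proof is correct and is essentially the paper's argument. The paper simply asserts $\pi(\A_n)=p(A_1A_2A_3)$ and evaluates that triangle's perimeter after normalizing $p(\A_n)=1$, whereas you also justify why consecutive sides maximize $2+|u_k+u_l|$ (via monotonicity of $\cos\frac{\pi m}{n}$ for $1\le m\le \frac n2$), and you check that $z_k=\frac1n\mathrm{e}^{\mathrm{i}2\pi k/n}$ is feasible for $\pi_{n,2}^{\max}$ in the sense of Proposition \ref{prop: equivalences}.
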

\begin{proof}
    If $\A_n=A_1\cdots A_n$, then we have $\pi(\A_n)=p(A_1A_2A_3)$. Setting $p(\A_n)=1$, observe that $p(A_1A_2A_3)=\frac4n\cos^2\frac{\pi}{2n}$.
\end{proof}

Note that for $n = 4$, this bound is tight; Corollary \ref{cor: L42} and Proposition \ref{prop: equivalences} imply that $\pi_{4, 2}^{\max}=\frac{2 + \sqrt2}{4}$, which is achieved when $\mathcal{X}$ is a square. Alternatively, this follows for $n=4$ from Theorem \ref{thm: optimality of regular n-gon}.

We conjecture that for $n=5$ the optimal polygon is approximately $1.3\%$ better than the regular pentagon.

\begin{conjecture} \emph{The house}, corresponding to a convex pentagon $ABCDE$ where $ABCE$ is a rectangle and $|AB|=30$, $|BC|=|EA|=16$, $|CD|=|DE|=25$ (see Figure \ref{fig:house1}), is an optimal polygon with $\pi_{5, 2}^{\max}=\frac57$. We have verified that the house is a strict local minimum up to rotation and scaling.
\end{conjecture}

\begin{figure}[h!]
    \centering
    \begin{subfigure}[b]{0.48\textwidth}
        \centering
        \includegraphics[width=\textwidth]{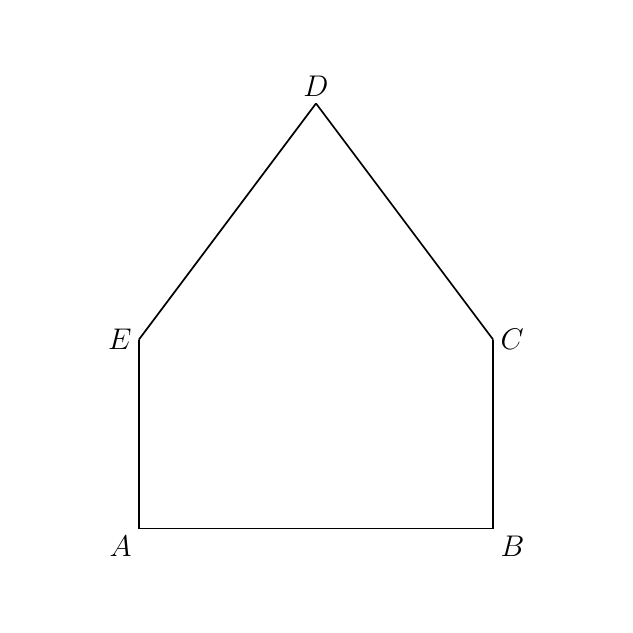}
        \caption{\emph{The house}. Lattice polygon, sides and diagonals are of integer length.}
        \label{fig:house1}
    \end{subfigure}    
    \label{fig:house}
    \hfill
    \begin{subfigure}[b]{0.48\textwidth}
        \centering
        \includegraphics[width=\textwidth]{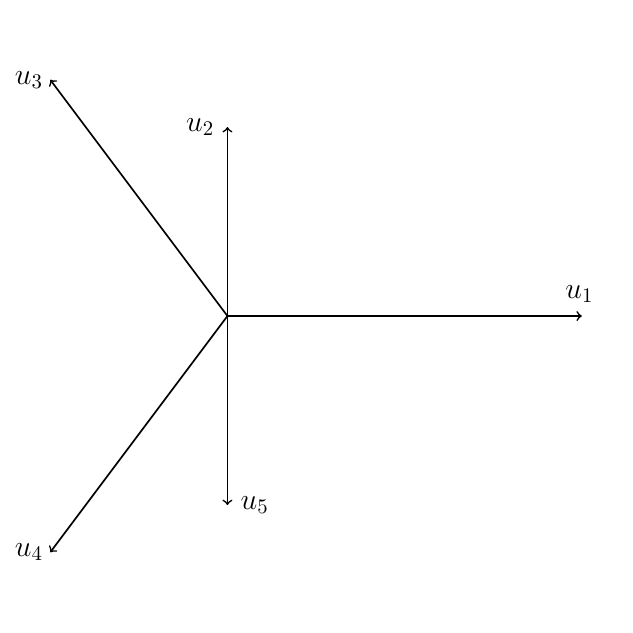}
        \caption{Vector configuration of \emph{the house}. Here $u_1=\overrightarrow{AB}$, $u_2=\overrightarrow{BC}$, $u_3=\overrightarrow{CD}$, $u_4=\overrightarrow{DE}$, $u_5=\overrightarrow{EA}$.}
        \label{fig:house2}
    \end{subfigure}
\end{figure}

Numerical experiments for $d = 2$ and $n \geq 5$ converge toward a particular pattern of polygons that improve the quantity of $\delta(\mathcal{X})$ over the regular $n$-gon configuration. For even $n$, we formulate the following conjecture.

\begin{conjecture} \label{conj: n>=6 even}
    For $n \geq 6$ even, the value of $\pi_{n,2}^{\max}$ is achieved by the following \emph{3-4 rectangle} configuration. Take $\frac{n-2}2$ numbers equal to $3$, $\frac{n-2}2$ equal to $-3$, one equal to $4\mathrm{i}$ and one equal to $-4\mathrm{i}$. Equivalently,
    \[
    \max_{1\le i<j\le n}|z_i|+|z_j|+|z_i+z_j|\ge\frac{4}{n+\frac23}\sum_{k=1}^{n}|z_k|
    \]
    for any complex numbers $z_1,\dots,z_{n}$ that add up to zero, where equality is attained by 
    \[z_1 = \cdots = z_{\frac{n}2-1} = 3, \quad z_{\frac{n}2}=4\mathrm{i}, \quad z_{\frac{n}2 + 1} = \cdots = z_{n-1} = -3, \quad z_n=-4\mathrm{i}.
    \]
\end{conjecture}

Since $\frac4n\cos^2\frac{\pi}{2n}>\frac4{n+\frac23}$ (similar to an inequality in the proof of Theorem \ref{thm: non-convexity d=n-2}), the above \emph{3-4 rectangle} configuration shows that when $d=2$ and $n\ge6$ is even, the optimal polygon is never the regular $n$-gon; this is in contrast to when $d=n-2$, where the optimal polygon is precisely the regular $n$-gon due to Theorem \ref{thm: optimality of regular n-gon}. 

When $d=2$ and $n\ge7$ is odd, our experiments suggest that regular polygons are also suboptimal and that $\pi_{n,2}^{\max}$ is asymptotic to $\frac{4}{n+\frac59}$.

\begin{remark}
The conjecturally optimal configurations above exhibit a phenomenon familiar from min-max extremal problems (e.g. maximal-perimeter small polygons in
\cite{Henrion13,Bingane24}): the maximum is attained
simultaneously by many pairs. \emph{The house} attains it for $7$ of the
$\binom{5}{2}=10$ pairs, and the \emph{3-4 rectangle} for
$m^2+m-2$ of the $2m^2-m$ pairs, where $n=2m$; this is roughly
half of all pairs. Without the constraint $\sum_k z_k=0$, all the optimal configurations of
Theorem~\ref{theorem: optimal config d=2}, and conjectured ones in \eqref{eq: configs}, \emph{all} $\binom{n}{d}$ pairs
attain the maximum.
Determining the maximal number of simultaneously active
pairs for a non-trivial configuration with $\sum_k z_k = 0$, and
whether the optimal configurations for $\pi^{\max}_{n,2}$ always
achieve this maximum is an interesting open question. 
\end{remark}

\appendix

\section{Proof of Proposition \ref{prop: equivalences}}\label{sec:appendixA}

We present a proof of Proposition \ref{prop: equivalences} which relates the optimal configuration problem over $n$ complex numbers to the problem of minimizing $s+t$ subject to $(s,t) \in \mathcal{L}_{n,d}$, and minimizing $t$ subject to $(t,t) \in \mathcal{L}_{n,d}$ respectively.  
\begin{proof}[Proof of Proposition \ref{prop: equivalences}]
By Lemma \ref{lem: s,t <= 0} we may assume that $\max \{s,t\} \leq 0$.
Observe that both problems optimize a continuous function over a compact set. Thus, the optimal values are attained.

Let $M \in \Sym_n$ with $M \sim \diag (1,\ldots,1,s,t)$ and $\max\{s,t\} \leq 1$. This implies that 
\[
I-M \sim \diag(0,\ldots,0,1-s,1-t) \succeq 0,
\]
so there exists a matrix $B \in \RR^{2 \times n}$ with $I-M=B^\top B$. Then $M \in \S^{n,d}$ if and only if $I_d-B_S^\top B_S \succeq 0$ for all $S \in \binom{[n]}{d}$. Note that this is equivalent to $\max_{S \in \binom{[n]}{d}} \rho(B_S^\top B_S) \leq 1$.
Let $A \in \RR^{2 \times n}$ be proportional to $B$ and express its columns in polar coordinates: 
\begin{align}\label{eq: matrix A}
A=\begin{bmatrix}
a_1\cos\b_1 & a_2\cos\b_2 & \dots & a_n\cos\b_n\\
a_1\sin\b_1 & a_2\sin\b_2 & \dots & a_n\sin\b_n
\end{bmatrix}.
\end{align}
For every index set $S \in \binom{[n]}{d}$, the two largest eigenvalues of $A_S^\top A_S$ coincide with the two eigenvalues of $A_SA_S^\top$. 
To compute the eigenvalues of $A_SA_S^\top$, we calculate its characteristic polynomial $t^2-\tr(A_SA_S^\top)t+\det(A_SA_S^\top)$, where $\tr(A_SA_S^\top)=\sum_{j \in S}a_j^2$. The determinant can be calculated by the Cauchy-Binet formula:
\begin{align*}
&\det(A_SA_S^\top) = \sum_{1 \leq k < l \leq d} \left(\det\begin{bmatrix}
    a_{j_k} \cos\beta_{j_k} & a_{j_l} \cos\beta_{j_l} \\ a_{j_k} \sin\beta_{j_k} & a_{j_l} \sin\beta_{j_l}
\end{bmatrix} \right)^2 \\ 
&= \sum_{1 \leq k < l \leq d}\left( a_{j_k}a_{j_l}\cos\beta_{j_k}\sin\beta_{j_l}-a_{j_k}a_{j_l}\cos\beta_{j_l}\sin\beta_{j_k}\right)^2 =\sum_{1 \leq k < l \leq d} a_{j_k}^2a_{j_l}^2\sin^2(\beta_{j_l}-\beta_{j_k})~.    
\end{align*} 
Setting $r_k:=a_k^2, \theta_k:=2\b_k$, we calculate with the double-angle identities the largest eigenvalue $\lambda$:
\begin{align*}
    2\lambda&=\sum_{j \in S} a_j^2+\sqrt{\left(\sum_{j \in S} a_j^2\right)^2-4\sum_{k<l \in S}a_k^2a_l^2\sin^2(\b_k-\b_l)}\\
    &=\sum_{j \in S} a_j^2+\sqrt{\left(\sum_{j \in S} a_j^2\right)^2-2\sum_{k<l \in S}a_k^2a_l^2(1-\cos(\theta_k-\theta_l))}\\
    &=\sum_{j \in S} a_j^2+\sqrt{\sum_{j \in S} a_j^4+2\sum_{k<l \in S}a_k^2a_l^2\cos(\theta_k-\theta_l)}\\
    &=\sum_{j \in S} r_j+\sqrt{\sum_{j \in S} r_j^2+2\sum_{k<l \in S}r_kr_l\cos(\theta_k-\theta_l)}\\
    &=\sum_{j\in S}|z_j|+\left|\sum_{j\in S} z_j\right|,
\end{align*}
where $z_k:=r_k(\cos\theta_k+i\sin \theta_k)$. Thus, the maximum eigenvalue $\alpha := \max_{S \in \binom{[n]}{d}}\rho(A_S^\top A_S)$ is exactly $\max_{S \in \binom{[n]}{d}} \frac{1}{2}\sum_{j\in S}|z_j|+\left|\sum_{j\in S} z_j\right|$.
We now proceed with proving each relation separately.\\
(1) We want to verify the relation between $\mu_{n,d}^{\tr}$ and $\mu_{n,d}^{\max}$. Since we optimize a linear function over $\mathcal{L}_{n,d}$, we may assume that the optimal value $(s,t)$ is attained at the boundary of $\mathcal{L}_{n,d}$. In particular, the corresponding matrix $M \in \S^{n,d}$ must be on the boundary of $\S^{n,d}$. Thus, $\max_{S \in \binom{[n]}{d}} \rho(B_S^\top B_S)=1$ holds. We rescale $B$ by its Frobenius norm and consider $A := \frac{1}{\|B\|_F} B$. Then, we have $\|A\|_F^2 =1$ and $ 1 = \max_{S \in \binom{[n]}{d}} \rho(B_S^\top B_S) = \|B\|_F^2 \alpha.$ Moreover,
    \begin{align}\label{eq: 2-s-t = 1/alpha}
    2-(s + t) = \tr (I-M) = \tr (B^\top B)  = \|B\|_{F}^2 = \frac{1}{\alpha},
    \end{align}
    so minimizing $s+t$ is equivalent to minimizing $\alpha$. Since 
    \[
    \|A\|_F^2 = \tr(AA^\top) = \sum_{k=1}^n a_k^2 = \sum_{k=1}^n |z_k|=1,
    \]
    we observe that our variables exactly parametrize the feasible region of the optimization problem $\mu_{n,d}^{\max}$, which proves $\alpha = \frac{1}{2} \mu_{n,d}^{\max}$ and together with \eqref{eq: 2-s-t = 1/alpha} we obtain \[\mu_{n,d}^{\tr}=2\left(1-\frac{1}{\mu_{n,d}^{\max}}\right).\]
    Let $z_1,\dots,z_n$ be as above and let $(s,t) \in \mathcal{L}_d$ be a corresponding optimal solution attaining $\mu_{n,d}^{\tr}$. Then $\diag(0,\dots,0,1-s,1-t)=\frac{2}{\mu_{n,d}^{\max}}A^\top A$. The two largest eigenvalues of $A^\top A$ are the same as the eigenvalues of $AA^\top$ which are $\frac{1}{2}(1\pm | \sum_{j=1}^nz_j |),$
since $\sum_{j=1}^n |z_j| =1$.
Therefore, $1-s=\frac{1}{\mu_{n,d}^{\max}}(1-|\sum_{j=1}^n z_j|)$ and $1-t=\frac{1}{\mu_{n,d}^{\max}}(1+|\sum_{j=1}^n z_j|)$, up to permutation, which implies the claim.

(2) Next, we assume that $M \sim \diag(1,\ldots,1,t,t)$. The spectral decomposition of of $M$ is $M=I+(t-1)P$, where $P$ is the orthogonal projection matrix onto the $t$-eigenspace. Since $P$ has rank 2, we can factor $P=A^\top A$ for some matrix $A \in \RR^{2 \times n}$ with orthonormal rows. Then $M \in \S^{n,d}$ if and only if  $1+(t-1)\alpha \geq 0$, where $\alpha = \max_{S \in \binom{[n]}{d}}\rho(A_SA_S^\top)$. Observe that $\alpha > 0$ since at least one diagonal entry of $P$ is positive. Thus, $M \in \S^{n,d}$ if and only if $t \geq 1- \frac{1}{\alpha}$. So, minimizing $t$ is equivalent to minimizing $\alpha$. 
    As in \eqref{eq: matrix A}, we write the columns of $A$ in polar coordinates. Because $A$ has orthonormal rows, we get $AA^\top = I_2$ which is equivalent to the three conditions: $\sum_{k=1}^n a_k^2 \cos^2 \beta_k = 1, \sum_{k=1}^n a_k^2 \sin^2 \beta_k =1$, and $\sum_{k=1}^n a_k^2 \cos\beta_k \sin \beta_k = 0$. We also have $z_k = a_k^2(\cos (2\beta_k) + \mathrm{i} \sin (2\beta_k))$, so 
    the three conditions are equivalent to $\sum_{k=1}^n |z_k| =2$ and $\sum_{k=1}^n z_k = 0$. This yields
    \[ \alpha = \frac{1}{2}\max_{S\in\binom{[n]}d} \sum_{s\in S}|z_s|+\left|\sum_{s\in S} z_s\right|. \] To align this with the feasible region of the optimization problem of $\pi_{n,d}^{\max}$ we rescale to complex numbers $\sum_{k=1}^n |z_k|=1$ and this proves the relation
    \[ \pi_{n,d}^{\tr} = 1- \frac{1}{\pi_{n,d}^{\max}}.\]

\end{proof}

\section{Proofs of Theorem \ref{theorem: minmax d=2} and Proposition \ref{prop: two of length one}}\label{sec:appendixC}
We present a proof of Theorem \ref{theorem: minmax d=2} which establishes the inequality
    \[
        \max_{1\le k<l\le n}|z_k|+|z_l|+|z_k+z_l|\ge\frac4{n+1}\sum_{k=1}^n|z_k|
    \]
    for all $n \geq 3$ which settles Conjecture \ref{conjecture: optimal configurations} for $d=2$. We also present a proof of Proposition \ref{prop: two of length one}.

We refer to the sketch of the proof of Theorem \ref{theorem: minmax d=2} and continue from there. By positive rescaling one can set $C=1$, which we will assume from now on. In this case $S=\sum_{k=1}^n r_k=\frac{n+1}4$, $r_k\in[\frac14,\frac12]$ for $k=1,\dots,n$, and 
\[
F(r_1\dots,r_n)=\sum_{k=1}^n\arccos\left(\frac{1}{2r_kr_{k+1}}-\frac{1}{r_k}-\frac{1}{r_{k+1}}+1\right)~.
\]
Our goal is to show that, with these restrictions, $F(r_1,\dots,r_n)\ge2\pi$. 

Let $x_k=4r_k-1$. Then we have $x_k\in [0,1]$ and $\sum_{k=1}^n x_k=1$. Define $$\psi(x):=\sqrt{\frac{1-x}{1+x}}.$$
Observe that $\psi(x_k)=\sqrt{\frac{1}{2r_k}-1}$.
Then we have
\[\frac{1}{2r_kr_{k+1}}-\frac{1}{r_k}-\frac{1}{r_{k+1}}+1=2\left(\frac{1}{2r_k}-1\right)\left(\frac{1}{2r_{k+1}}-1 \right)-1=2\psi(x_k)^2\psi(x_{k+1})^2-1.\]
From the doubling angle cosine identity we get \[\arccos \left(2\psi(x_k)^2\psi(x_{k+1})^2-1\right)=2\arccos \left(\psi(x_k)\psi(x_{k+1})\right)  .\]
Therefore we may state our problem as follows:

\begin{center}
    If $(x_1,\dots,x_n)$ lies in the probability simplex $\Delta^n$, then $\sum_{k=1}^n\arccos(\psi(x_k)\psi(x_{k+1}))\geq\pi$.
\end{center}

Recall that $\arccos(-t)=\pi-\arccos t$ for all $t\in[-1,1]$. 
\begin{lemma}\label{lemma: identity}
    For any $x,y,z\in[0,1]$ satisfying $x^2+y^2+z^2+2xyz=1$, it holds that
    \[
    \arccos x+\arccos y+\arccos z=\pi~.
    \]
\end{lemma}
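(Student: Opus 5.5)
We use the classical trigonometric parametrization of $x,y,z$. Since $x,y,z\in[0,1]$, put $\alpha=\arccos x$, $\beta=\arccos y$, $\gamma=\arccos z$, all in $[0,\pi/2]$. The goal is to show $\gamma=\pi-\alpha-\beta$, which is exactly the claimed identity.

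First regard the constraint $x^2+y^2+z^2+2xyz=1$ as a quadratic in $z$:
\[
z^2+2xy\,z+(x^2+y^2-1)=0 .
\]
Its roots are
\[
z=-xy\pm\sqrt{x^2y^2-x^2-y^2+1}=-xy\pm\sqrt{(1-x^2)(1-y^2)} .
\]
In the angle variables this reads $z=-\cos\alpha\cos\beta\pm\sin\alpha\sin\beta$, so $z\in\{-\cos(\alpha+\beta),\,-\cos(\alpha-\beta)\}$.

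Next, rule out the second root. For $\alpha,\beta\in[0,\pi/2]$ we have $|\alpha-\beta|\le\pi/2$, so $\cos(\alpha-\beta)\ge0$ and $-\cos(\alpha-\beta)\le0$. Since $z\ge 0$, the root $z=-\cos(\alpha-\beta)$ is possible only when $\cos(\alpha-\beta)=0$. That forces $\{\alpha,\beta\}=\{0,\pi/2\}$, i.e. $\{x,y\}=\{1,0\}$, and then $z=0$. In this degenerate case $\cos(\alpha+\beta)=\cos(\pi/2)=0$ as well, so $z=-\cos(\alpha+\beta)$ still holds. Hence in every case
\[
z=-\cos(\alpha+\beta)=\cos\bigl(\pi-(\alpha+\beta)\bigr).
\]

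Finally, $\alpha+\beta\in[0,\pi]$, so $\pi-\alpha-\beta\in[0,\pi]$, which is the range of $\arccos$. Applying $\arccos$ therefore gives $\gamma=\arccos z=\pi-\alpha-\beta$, i.e. $\arccos x+\arccos y+\arccos z=\pi$.

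The main point needing care is the root selection, and in particular the boundary case where the two roots coincide in value. The remaining steps are routine algebra.
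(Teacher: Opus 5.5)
Your proof is correct and follows the paper's argument essentially verbatim: you solve the constraint as a quadratic in $z$, select the root $-xy+\sqrt{(1-x^2)(1-y^2)}=-\cos(\alpha+\beta)$ using $z\ge 0$, and apply $\arccos(-\cos\theta)=\pi-\theta$ with $\theta=\alpha+\beta\in[0,\pi]$. The paper leaves the root selection implicit, whereas you spell it out: the other root is $\le 0$, and it can equal the chosen one only when $\{x,y\}=\{0,1\}$.
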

\begin{proof}
    Since $z\in[0,1]$, the quadratic equation $z^2+(2xy)z+(x^2+y^2-1)=0$ yields 
    \[
    z=\frac{-2xy+\sqrt{4x^2y^2-4(x^2+y^2-1)}}2=-xy+\sqrt{(1-x^2)(1-y^2)}
    \]
    so if $x=\cos\alpha$ and $y=\cos\beta$, with $\alpha,\beta\in[0,\frac{\pi}2]$, then $z=-\cos(\alpha+\beta)$. Hence,
    \[
        \arccos x+\arccos y+\arccos z=\alpha+\beta+\arccos(-\cos(\alpha+\beta))=\alpha+\beta+(\pi-(\alpha+\beta))=\pi~.
    \]
\end{proof}
\begin{corollary}\label{cor: arcos n=3}
    If $x,y,z\in[\frac14,\frac12]$ satisfy $x+y+z=1$, then $F(x,y,z)=2\pi~$.
\end{corollary}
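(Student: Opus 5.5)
The plan is to reduce Corollary \ref{cor: arcos n=3} to Lemma \ref{lemma: identity} through the substitution already set up above. With $C=1$ and $n=3$ we have $S=\frac{n+1}{4}=1$, so the hypothesis $x+y+z=1$ with $x,y,z\in[\frac14,\frac12]$ is exactly the normalization in force. Put $u_1=4x-1$, $u_2=4y-1$, $u_3=4z-1$. Then $u_k\in[0,1]$ and $u_1+u_2+u_3=4-3=1$. By the double-angle computation preceding Lemma \ref{lemma: identity},
\[
F(x,y,z)=2\bigl(\arccos(\psi(u_1)\psi(u_2))+\arccos(\psi(u_2)\psi(u_3))+\arccos(\psi(u_3)\psi(u_1))\bigr).
\]
It therefore suffices to show that the three arguments $a=\psi(u_1)\psi(u_2)$, $b=\psi(u_2)\psi(u_3)$, $c=\psi(u_3)\psi(u_1)$ satisfy the hypotheses of Lemma \ref{lemma: identity}; that lemma then gives $\arccos a+\arccos b+\arccos c=\pi$, and hence $F=2\pi$.

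The membership $a,b,c\in[0,1]$ is immediate, since $\psi$ maps $[0,1]$ into $[0,1]$. For the algebraic relation, set $p_k=\psi(u_k)^2=\frac{1-u_k}{1+u_k}$. Then
\[
a^2+b^2+c^2=p_1p_2+p_2p_3+p_3p_1=e_2(p),\qquad abc=p_1p_2p_3=e_3(p),
\]
so the relation $a^2+b^2+c^2+2abc=1$ becomes
\[
e_2(p)+2e_3(p)=1.
\]
Since $u_k=\frac{1-p_k}{1+p_k}$, the constraint $\sum_k u_k=1$ can be cleared of denominators by multiplying through by $\prod_k(1+p_k)$. Expanding the left side gives
\[
\sum_i (1-p_i)(1+p_j+p_k+p_jp_k)=(3+2e_1+e_2)-(e_1+2e_2+3e_3)=3+e_1-e_2-3e_3,
\]
where $\{j,k\}$ denotes the complement of $i$. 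The right side is $1+e_1+e_2+e_3$. Equating the two sides yields $2-2e_2-4e_3=0$, which is exactly $e_2+2e_3=1$.

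The only step needing care is this symmetric-function bookkeeping when clearing denominators; it is routine, but the constants must be tracked correctly. Otherwise the argument is a direct application of Lemma \ref{lemma: identity}. Equality holds identically on the whole region, rather than just as an inequality, because the cyclic sum for $n=3$ contains all three pairs and the constraint $\sum_k u_k=1$ is precisely the Lemma's relation.
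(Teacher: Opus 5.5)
Your proposal is correct and follows essentially the same route as the paper. Your $p_k=\psi(4x_k-1)^2=\frac{1}{2x_k}-1$ are exactly the paper's $u,v,w$, and your relation $e_2(p)+2e_3(p)=1$ is the paper's $uv+vw+wu+2uvw=1$; both then apply Lemma~\ref{lemma: identity} to $\sqrt{p_ip_j}$ and finish with the double-angle identity $\arccos(2a^2-1)=2\arccos a$.
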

\begin{proof}
Let $u:=\frac1{2x}-1$, $v:=\frac1{2y}-1$, $w:=\frac1{2z}-1$ so $u,v,w\in[0,1]$ and
\[
\frac1{2xy}-\frac1x-\frac1y+1=2uv-1~.
\]
Given $x+y+z=1$ we have $\frac1{1+u}+\frac1{1+v}+\frac1{1+w}=2$, or equivalently
\begin{align*}
    uv+vw+wu+2uvw=1~.
\end{align*}
We show that $\arccos(2uv-1)+\arccos(2vw-1)+\arccos(2wu-1)=2\pi$.

Let $a:=\sqrt{uv}$, $b:=\sqrt{vw}$, $c=\sqrt{wu}$. Then $a^2+b^2+c^2+2abc=1$ and $a,b,c\in[0,1]$, so we can apply Lemma \ref{lemma: identity}. Hence,
\begin{align*}
     \arccos(2uv-1)&+\arccos(2vw-1)+\arccos(2wu-1)\\
     = \arccos(2a^2-1)&+\arccos(2b^2-1)+\arccos(2c^2-1)\\
     =2(\arccos a&+\arccos b+\arccos c)=2\pi~.
\end{align*}
\end{proof}

\begin{lemma}\label{lemma: arccos psi}
    For any $x,y\in[0,1]$ with $x+y\le1$,
    \[
    \arccos(\psi(x))+\arccos(\psi(y))+\arccos(\psi(x)\psi(y))\ge2\arccos(\psi(x+y))~.
    \]
    Equality holds if and only if $x=0$, or $y=0$, or $x+y=1$.
\end{lemma}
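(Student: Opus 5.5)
The plan is to fix the sum $\sigma:=x+y\in[0,1]$ and study
\[
g(x):=\arccos(\psi(x))+\arccos(\psi(\sigma-x))+\arccos\bigl(\psi(x)\psi(\sigma-x)\bigr)-2\arccos(\psi(\sigma)),\qquad x\in[0,\sigma],
\]
which is symmetric under $x\mapsto \sigma-x$. The claim is then that $g\ge 0$, with equality only at $x\in\{0,\sigma\}$ or when $\sigma=1$.

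\textbf{Endpoint and boundary cases.} Since $\psi(0)=1$, at $x=0$ we get $\arccos\psi(0)=0$ and $\arccos(\psi(0)\psi(\sigma))=\arccos\psi(\sigma)$, so $g(0)=0$, and $g(\sigma)=0$ by symmetry. For $\sigma=1$ the right-hand side is $2\arccos\psi(1)=\pi$. Here I would reduce to Lemma \ref{lemma: identity}: write $u=\psi(x)$ and $v=\psi(1-x)$. A direct computation gives $u^2v^2=\frac{(1-x)x}{(1+x)(2-x)}$, and $u^2+v^2+u^2v^2+2u^2v^2\cdot$ (suitably arranged) should satisfy the identity $a^2+b^2+c^2+2abc=1$ for the triple $(a,b,c)=(\cos\tfrac{A}{1},\dots)$ coming from the half-angle form. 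The cleaner route is to note that $\arccos(\psi(x))=\tfrac12\arccos(2\psi(x)^2-1)$ is half of an angle of the type appearing in Corollary \ref{cor: arcos n=3}. With $r=\tfrac{1+x}{4}$, $r'=\tfrac{2-x}{4}$, $r''=\tfrac14$ (so that $r+r'+r''=1$ and each lies in $[\tfrac14,\tfrac12]$), that corollary yields exactly $g\equiv 0$ on $\sigma=1$. This gives the equality cases.

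\textbf{Interior, $0<\sigma<1$.} The key simplification is the derivative
\[
\frac{d}{dx}\arccos\psi(x)=\frac{1}{(1+x)\sqrt{2x(1-x)}},
\]
since $1-\psi^2=\tfrac{2x}{1+x}$ and $\psi'=-\tfrac{1}{(1+x)^2\psi}$. Writing $\psi(x)=\cos A$ and $\psi(y)=\cos B$ with $A,B\in[0,\pi/2]$, the left-hand side becomes $A+B+\arccos(\cos A\cos B)$, which is the sum of the three angles of a right spherical triangle with legs $A,B$. The substitution $x=\tan^2$-type (i.e. $x=\frac{\sin^2A}{1+\cos^2A}$) turns the constraint $x+y=\sigma$ into a curve in the $(A,B)$ square.

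I would then show that $g'(x)>0$ on $(0,\sigma/2)$. Together with symmetry and $g(0)=0$, this gives $g>0$ in the interior. Concretely, $g'(x)=h(x)-h(\sigma-x)+\frac{d}{dx}\arccos(\psi(x)\psi(\sigma-x))$, where $h(x)=\frac{1}{(1+x)\sqrt{2x(1-x)}}$. The term $h(x)-h(\sigma-x)$ is positive for $x<\sigma/2$ because $h$ is decreasing on $(0,1/3]$ and more generally $h(x)>h(\sigma-x)$ near $0$. The third term has the opposite sign, since the product $\psi(x)\psi(\sigma-x)$ is minimized at $x=\sigma/2$ by log-concavity of $\psi$.

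\textbf{Main obstacle.} The hard step is comparing these two competing contributions. If the sign of $g'$ cannot be controlled directly, my fallback is a two-variable argument in $(A,B)$. On the curve $\sigma=$ const, Lagrange conditions force $A=B$ or a boundary point, so it suffices to check the single symmetric point $x=y=\sigma/2$ and verify there that $2A+\arccos(\cos^2A)>2\arccos\psi(2x)$. That is a one-variable inequality, which I would prove by comparing Taylor expansions at $x=0$ and monotonicity of the difference, using the derivative formula above.
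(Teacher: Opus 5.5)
\textbf{There is a genuine gap: the interior case is never proved.}

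Your boundary analysis is correct. $g(0)=g(\sigma)=0$ is immediate. On $\sigma=1$, the choice $(r,r',r'')=(\tfrac{1+x}{4},\tfrac{2-x}{4},\tfrac14)$, i.e.\ $(x_1,x_2,x_3)=(x,1-x,0)$, in Corollary~\ref{cor: arcos n=3} gives $\arccos\psi(x)+\arccos\psi(1-x)+\arccos(\psi(x)\psi(1-x))=\pi=2\arccos\psi(1)$. The paper instead checks the hypothesis of Lemma~\ref{lemma: identity} directly, which amounts to the same thing.

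The missing part is the case $x,y>0$, $x+y<1$. This is the whole content of the inequality, and it is also the only source of the ``only if'' half of the equality statement. You reduce it to $g'>0$ on $(0,\sigma/2)$ and then stop exactly at the comparison of the two competing terms. The fallback does not close the gap:
\begin{itemize}
\item The assertion that the Lagrange conditions on $\{x+y=\sigma\}$ force $A=B$ is given without any computation. It does not follow from the symmetry $x\leftrightarrow\sigma-x$, since a symmetric function on a symmetric curve can have critical points in off-diagonal symmetric pairs. Ruling those out is essentially the same problem as controlling the sign of $g'$.
\item The one-variable check at $x=\sigma/2$ is likewise only announced, not carried out.
\end{itemize}

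Some supporting remarks are also inaccurate:
\begin{itemize}
\item Concavity of $\log\psi$ makes $\psi(x)\psi(\sigma-x)$ \emph{maximal}, not minimal, at $x=\sigma/2$. That is the reason the third term of $g'$ is negative.
\item $\frac{1}{(1+x)\sqrt{2x(1-x)}}$ decreases only on $(0,\frac{1+\sqrt{17}}{8})$ and blows up at $x=1$. So positivity of its difference at $x$ and $\sigma-x$ needs more than monotonicity near $0$.
\item $A+B+\arccos(\cos A\cos B)$ is the perimeter, not the angle sum, of the right spherical triangle with legs $A,B$.
\end{itemize}

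\textbf{How the paper closes it.} The paper avoids critical-point analysis entirely. Set $\cos\alpha=\psi(x)$, $\cos\beta=\psi(y)$, $\cos\gamma=\psi(x+y)$. It then suffices to show $\cos\alpha\cos\beta<\cos(2\gamma-\alpha-\beta)$. With $p=\tan\alpha=\sqrt{2x/(1-x)}$ and $q=\tan\beta$, this becomes $1<(1-pq)\cos2\gamma+(p+q)\sin2\gamma$, where $\cos2\gamma=\frac{1-3z}{1+z}$, $\sin2\gamma=\frac{2\sqrt{2z(1-z)}}{1+z}$ and $z=x+y$. After expressing $z$ through $p,q$, squaring, and writing $t=(p-q)^2\ge0$, the difference of the two sides factors as $pq(2-pq)$ times a polynomial in $pq$ and $t$ with positive coefficients. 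Strictness then follows from $0<pq<2$, which is exactly where $x,y>0$ and $x+y<1$ enter.

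\textbf{If you keep your own route.} You can compute $g'$ in closed form. Using $\psi'(x)=-\frac{1}{(1+x)^2\psi(x)}$ and $1-\psi(x)^2\psi(y)^2=\frac{2(x+y)}{(1+x)(1+y)}$, one finds that $g'(x)$ has the sign of
\[
\phi(y)-\phi(x)-(y-x)\sqrt{xy(x+y)},\qquad \phi(t):=(1+t)\sqrt{t(1-t)},\quad y=\sigma-x .
\]
This expression vanishes identically when $x+y=1$. Proving that it is positive for $0<x<y$, $x+y<1$ is precisely the two-variable inequality your plan requires and does not supply.
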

\begin{proof}
    If $x=0$ or $y=0$, the inequality trivially becomes an equality. If $x+y=1$, observe that $\psi(x)$, $\psi(1-x)$, $\psi(x)\psi(1-x)$ satisfy the identity of Lemma \ref{lemma: identity}, i.e., 
    \[
    \frac{1-x}{1+x}+\frac{x}{2-x}+3\frac{(1-x)x}{(1+x)(2-x)}=1
    \]
    so there is also equality in this case.
    
    Suppose then that $x,y>0$ and $x+y<1$. Let $\cos\alpha=\psi(x)$, $\cos\beta=\psi(y)$, $\cos\gamma=\psi(x+y)$, with $\alpha,\beta,\gamma\in(0,\frac{\pi}2)$. We need to show that
    \[
    \alpha+\beta+\arccos(\cos\alpha\cos\beta)>2\gamma~,
    \]
    or equivalently,
    \begin{align*}
        \arccos(\cos\alpha\cos\beta)&> 2\gamma-(\alpha+\beta)\\
        \Leftrightarrow \cos\alpha\cos\beta&< \cos(2\gamma-(\alpha+\beta))\\
        \Leftrightarrow \cos\alpha\cos\beta&< \cos2\gamma\cos(\alpha+\beta)+\sin2\gamma\sin(\alpha+\beta)\\
        \Leftrightarrow \cos\alpha\cos\beta&< \cos2\gamma(\cos\alpha\cos\beta-\sin\alpha\sin\beta)+\sin2\gamma(\sin\alpha\cos\beta+\sin\beta\cos\alpha)\\
        \Leftrightarrow 1&< \cos2\gamma(1-\tan\alpha\tan\beta)+\sin2\gamma(\tan\alpha+\tan\beta)\\
        \Leftrightarrow 1&< (1-pq)\cos2\gamma+(p+q)\sin2\gamma
    \end{align*}
    where $p:=\tan\alpha$, and $q:=\tan\beta$.

    Observe that $\cos\alpha=\sqrt{\frac{1-x}{1+x}}$ and so $\sin\alpha=\sqrt{\frac{2x}{1+x}}$, therefore $p=\tan\alpha=\sqrt{\frac{2x}{1-x}}$, from where we obtain $x=\frac{p^2}{2+p^2}$. Similarly, $y=\frac{q^2}{2+q^2}$, and therefore
    \begin{align*}
        z:=x+y=\frac{p^2}{2+p^2}+\frac{q^2}{2+q^2}=\frac{2p^2+2q^2+2p^2q^2}{4+2p^2+2q^2+p^2q^2}=\frac{2A+2B^2}{4+2A+B^2}~
    \end{align*}
    where $A:=p^2+q^2$ and $B:=pq$. Observe that $A\ge 2B>0$ since $p^2+q^2-2pq=(p-q)^2$ and $x,y\in(0,1)$, and also that $B<2$ because $x+y<1$ implies $pq=2\sqrt{\frac{xy}{(1-x)(1-y)}}<2\sqrt{\frac{xy}{xy}}=2$.

    Also observe that $\cos\gamma=\sqrt{\frac{1-z}{1+z}}$, $\sin\gamma=\sqrt{\frac{2z}{1+z}}$, and so $\cos2\gamma=2\cos^2\gamma-1=\frac{1-3z}{1+z}$, and $\sin2\gamma=2\sin\gamma\cos\gamma=\frac{2\sqrt{2z(1-z)}}{1+z}$. 

    We thus need to show that
    \begin{align*}
        1 &< (1-pq)\left(\frac{1-3z}{1+z}\right)+(p+q)\left(\frac{2\sqrt{2z(1-z)}}{1+z}\right)\\
       \Leftrightarrow 1+z &< (1-pq)(1-3z)+2(p+q)\sqrt{2z(1-z)}\\
       \Leftrightarrow 4z &< 3pqz-pq+2(p+q)\sqrt{2z(1-z)}\\
       \Leftrightarrow z(4-3pq)+pq &< 2(p+q)\sqrt{2z(1-z)}
    \end{align*}
    or equivalently, changing to $A$ and $B$, $z(4-3B)+B < 2\sqrt{A+2B}\sqrt{2z(1-z)}\Leftrightarrow$
    \begin{align*}
        \left(\frac{2A+2B^2}{4+2A+B^2}\right)(4-3B)+B < 2\sqrt{A+2B}\sqrt{2\left(\frac{2A+2B^2}{4+2A+B^2}\right)\left(\frac{4-B^2}{4+2A+B^2}\right)}\\
        \Leftrightarrow (2A+2B^2)(4-3B)+B(4+2A+B^2) < 4\sqrt{A+2B}\sqrt{(A+B^2)(4-B^2)}~.
    \end{align*}   
    With the substitution $A=2B+t$ (so $t\ge0$ since $A\ge 2B$), one can verify that the LHS of the last inequality is positive (it is equivalent to $5B(4-B^2)+4t(2-B)>0$). So, squaring both sides, and then computing the difference RHS$-$LHS using $A=2B+t$, we obtain
    \begin{align*}
    (8A+4B-4AB+8B^2-5B^3)^2 \le 16(A+2B)(A+B^2)(4-B^2)\\
    \Leftrightarrow B(2-B)(25B^4+114B^3+156B^2+56B + (56B^2+128B+32)t + 32t^2)>0
    \end{align*}
    because $B\in(0,2)$ and $t\ge0$.
\end{proof}

\begin{lemma}\label{lemma: arccos sum}
    For any $x,y\in[-1,1]$,
    \[
    \arccos x+\arccos y=\arccos\left(xy-\sqrt{1-x^2}\sqrt{1-y^2}\right)~.
    \]
\end{lemma}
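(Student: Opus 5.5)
The plan is to write both arccosines as angles and apply the cosine addition formula. Set $\alpha:=\arccos x$ and $\beta:=\arccos y$, so that $\alpha,\beta\in[0,\pi]$. On this range the sine is nonnegative, which gives $\sin\alpha=\sqrt{1-x^2}$ and $\sin\beta=\sqrt{1-y^2}$ with the positive square roots. The addition formula then yields
\[
\cos(\alpha+\beta)=\cos\alpha\cos\beta-\sin\alpha\sin\beta=xy-\sqrt{1-x^2}\sqrt{1-y^2}.
\]
In particular the argument of the arccosine on the right-hand side lies in $[-1,1]$, so the expression is well defined.

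The remaining step is to invert the cosine. Since $\arccos$ is the inverse of $\cos$ restricted to $[0,\pi]$, we get $\arccos(\cos(\alpha+\beta))=\alpha+\beta$ exactly when $\alpha+\beta\in[0,\pi]$. This condition is where the real content lies, and I expect it to be the main obstacle.

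The condition $\alpha+\beta\le\pi$ is equivalent to $\alpha\le\pi-\beta$. Because $\cos$ is decreasing on $[0,\pi]$, this is equivalent to $\cos\alpha\ge\cos(\pi-\beta)=-\cos\beta$, that is, to $x+y\ge0$.

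Consequently the identity as literally stated, for all $x,y\in[-1,1]$, is false. For example, $x=y=-1$ gives a left-hand side of $2\pi$ and a right-hand side of $\arccos 1=0$. When $x+y<0$ one instead obtains $\arccos(\cdots)=2\pi-(\alpha+\beta)$.

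I would therefore prove the lemma under the additional hypothesis $x+y\ge0$, which in particular covers all $x,y\in[0,1]$. This is the case needed later, since the lemma is applied to values $\psi(x_k)\in[0,1]$ and to other nonnegative quantities, as in Lemma \ref{lemma: identity} and Lemma \ref{lemma: arccos psi}. I would state this restriction explicitly in the lemma.
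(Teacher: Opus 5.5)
Your proposal is correct and uses the same route as the paper: write $x=\cos\alpha$, $y=\cos\beta$ with $\alpha,\beta\in[0,\pi]$, then apply the cosine addition formula. You are also right about the statement. The paper's last step passes from $\cos(\alpha+\beta)=xy-\sqrt{1-x^2}\sqrt{1-y^2}$ to $\alpha+\beta=\arccos(\cdots)$ without checking that $\alpha+\beta\le\pi$, which is equivalent to $x+y\ge0$. Your counterexample $x=y=-1$ is valid; another is $x=y=-\tfrac12$, where the left side is $\tfrac{4\pi}{3}$ and the right side is $\tfrac{2\pi}{3}$. So the lemma as stated is false. Your corrected version is accurate: the identity holds under $x+y\ge0$, and otherwise the right side equals $2\pi-(\alpha+\beta)$.

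One small correction to your last paragraph. The lemma is invoked only in Lemma \ref{lemma: arccos triangle ieq}, in the cases $b=0$ or $c=0$. There the arguments are $\psi(a),\psi(c)$ (respectively $\psi(b),\psi(d)$), all in $[0,1]$, so your restricted version is enough. Lemmas \ref{lemma: identity} and \ref{lemma: arccos psi} carry out their own angle computations and do not use this lemma.
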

\begin{proof}
    If $\cos\alpha=x$ and $\cos\beta=y$ with $\alpha,\beta\in[0,\pi]$ then $\sin\alpha=\sqrt{1-x^2}$, $\sin\beta=\sqrt{1-y^2}$, and
    \begin{align*}
        \cos(\alpha+\beta)&=\cos\alpha\cos\beta-\sin\alpha\sin\beta =xy-\sqrt{1-x^2}\sqrt{1-y^2}\\
        \Rightarrow \alpha+\beta&=\arccos\left(xy-\sqrt{1-x^2}\sqrt{1-y^2}\right)~.
    \end{align*}
\end{proof}

Define the function $h:[0,1]\times[0,1]\to[0,\frac{\pi}{2}]$ by
\[
h(x,y)=\arccos(\psi(x)\psi(y)).
\]

\begin{lemma}\label{lemma: arccos triangle ieq}
    For any $a,b,c,d\in[0,1]$ with $a+b+c+d\le1$,
    \[
    h(a,b)+h(b,c)+h(c,d)\ge h(a,b+c)+h(b+c,d)
    \]
    and equality holds if and only if one of the following holds: 
    \begin{center}
    (i) $a=b=0$, or (ii) $b=c=0$, or (iii) $c=d=0$, or (iv) $a=d=0$ and $b+c=1$.
    \end{center}
\end{lemma}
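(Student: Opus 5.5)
Write $h(x,y)=\arccos(\psi(x)\psi(y))$ and set $\theta(x):=\arccos\psi(x)\in[0,\tfrac{\pi}{2}]$. The plan is to derive the inequality from Lemma \ref{lemma: arccos psi}, used as a ``splitting'' inequality, together with a monotonicity property of $h$. In this notation Lemma \ref{lemma: arccos psi} reads
\[
\theta(x)+\theta(y)+h(x,y)\ \ge\ 2\theta(x+y).
\]
Note that $h(0,y)=\theta(y)$, because $\psi(0)=1$.

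\textbf{Step 1 (geometric reading of $h$).} From Lemma \ref{lemma: arccos sum} I would note that $h(x,y)$ is the angle opposite the third side in a spherical right-angle-type configuration: $\cos h=\cos\theta(x)\cos\theta(y)$. This is exactly the spherical Pythagorean theorem with legs $\theta(x),\theta(y)$ and hypotenuse $h(x,y)$. So think of a right spherical triangle whose legs have lengths $\theta(b)$ and $\theta(c)$, and read the inequality as a spherical triangle inequality, as suggested by the lemma's name. Concretely, I expect the identity
\[
h(a,b+c)\le h(a,b)+\bigl(\text{correction}\bigr),
\]
and plan to reduce everything to the one-variable fact that $\theta$ is subadditive in the sense of Lemma \ref{lemma: arccos psi}.

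\textbf{Step 2 (reduction).} The key estimate I would aim for is
\[
h(a,b)+h(c,d)+h(b,c)\ \ge\ h(a,b+c)+h(b+c,d).
\]
Since $h$ is increasing in each argument and $\cos h(x,y)=\psi(x)\psi(y)$, I would use Lemma \ref{lemma: arccos sum} to combine $h(a,b)$ with part of $h(b,c)$. More precisely, I would split $h(b,c)=\eta_1+\eta_2$ with $\eta_i\ge 0$ and prove the two inequalities
\[
h(a,b)+\eta_1\ge h(a,b+c),\qquad h(c,d)+\eta_2\ge h(b+c,d).
\]
The natural choice is $\eta_1=h(a,b+c)-h(a,b)$, followed by proving $h(b,c)-\eta_1\ge h(b+c,d)-h(c,d)$. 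Writing everything with cosines via Lemma \ref{lemma: arccos sum}, this becomes a trigonometric inequality in four parameters. Using monotonicity in $a$ and $d$, I would first argue that the worst case is an extreme value of $a$ or $d$. When $a=0$ or $d=0$, the terms collapse to $\theta$'s, and the inequality is precisely Lemma \ref{lemma: arccos psi} applied to $(b,c)$ (or its symmetric form). This yields the claimed inequality when one outer variable vanishes.

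\textbf{Step 3 (general case and equality).} For general $a,d$, I would fix $b,c$ and study
\[
G(a,d)=h(a,b)+h(b,c)+h(c,d)-h(a,b+c)-h(b+c,d).
\]
This separates as $g_1(a)+g_2(d)+h(b,c)$ with $g_1(a)=h(a,b)-h(a,b+c)$ and $g_2(d)=h(c,d)-h(b+c,d)$. So it suffices to locate the minima of $g_1$ on $[0,1-b-c-d]$ and of $g_2$ on $[0,1-a-b-c]$. I would compute $\partial_a h(a,y)$ explicitly and show that $g_1$ is monotone (decreasing) in $a$, so its minimum is at the largest admissible $a$; similarly for $g_2$. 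This pushes the problem to the boundary $a+b+c+d=1$, with one more parameter left. On that boundary, I would apply Lemma \ref{lemma: arccos psi} with $x+y=1$-type identities (Lemma \ref{lemma: identity}) to get a closed form and verify nonnegativity. Equality cases would then be tracked through each step: equality in Lemma \ref{lemma: arccos psi} gives the degenerate cases (i)–(iii), and the boundary case $a=d=0$, $b+c=1$ gives (iv).

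The \textbf{main obstacle} is the monotonicity of $g_1$ in Step 3, i.e. showing that $\partial_a h(a,b)\le\partial_a h(a,b+c)$ in the right direction. If that fails, I would instead try a direct algebraic route: take cosines of both sides after moving terms as in the proof of Lemma \ref{lemma: arccos psi}, substitute $p=\tan\theta$, and reduce to a polynomial positivity certificate.
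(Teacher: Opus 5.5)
\textbf{There is a genuine gap.} It is exactly the step you flagged as your main obstacle: the monotonicity in Step 3 runs the other way.

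\emph{The sign of $g_1'$.} For $a\in[0,1)$ and $y>0$,
\[
\partial_a h(a,y)=\frac{-\psi'(a)\,\psi(y)}{\sqrt{1-\psi(a)^2\psi(y)^2}},
\]
with $-\psi'(a)>0$, and $y\mapsto y/\sqrt{1-x^2y^2}$ is strictly increasing. Since $\psi(b)>\psi(b+c)$ whenever $c>0$, this gives $\partial_a h(a,b)>\partial_a h(a,b+c)$. So $g_1$ is strictly \emph{increasing} in $a$, not decreasing. For example, with $b=c=0.2$ you get $g_1(0)\approx-0.24<g_1(0.3)\approx-0.14$.

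\emph{Consequence.} The inequality $\partial_a h(a,b)\le\partial_a h(a,b+c)$ you planned to prove is false. Minimizing over $a$ (and symmetrically over $d$, when $b>0$) drives you to $a=d=0$, not to the face $a+b+c+d=1$. Checking nonnegativity on that face proves nothing about the interior, where $G$ is smaller; for $b+c<1$ the true minimizer $a=d=0$ does not even lie on that face.

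\emph{Step 2 does not fill the gap.} The split $\eta_1=h(a,b+c)-h(a,b)$ only rearranges the target inequality. Also, the reduction to Lemma~\ref{lemma: arccos psi} is valid only when both $a=0$ and $d=0$, not when just one of them vanishes.

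\textbf{How to repair it.} With the direction corrected, your decomposition $G=g_1(a)+g_2(d)+h(b,c)$ becomes the paper's proof. For $b,c>0$,
\[
G(a,b,c,d)\ge G(0,b,c,d)\ge G(0,b,c,0)=\theta(b)+\theta(c)+h(b,c)-2\theta(b+c)\ge 0
\]
by Lemma~\ref{lemma: arccos psi}. In the equality case, strict monotonicity forces $a=d=0$, and Lemma~\ref{lemma: arccos psi} then forces $b+c=1$, which is case (iv).

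\textbf{Degenerate cases.} You must treat $b=0$ and $c=0$ separately, because then $g_1$ or $g_2$ is constant and strict monotonicity fails.
\begin{itemize}
\item For $b=0$, the inequality reduces to $\theta(a)+\theta(c)\ge h(a,c)$. This follows from Lemma~\ref{lemma: arccos sum}, with equality iff $a=0$ or $c=0$, giving (i) and (ii).
\item The case $c=0$ is symmetric and gives (ii) and (iii).
\end{itemize}
In particular, case (ii) with $a,d$ arbitrary does not arise from the equality cases of Lemma~\ref{lemma: arccos psi}, contrary to what your equality tracking suggests.
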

\begin{proof}
Observe that if $b=0$ then the inequality to prove becomes 
\[
\arccos(\psi(a))+\arccos(\psi(c))\ge\arccos(\psi(a)\psi(c)),
\] 
which follows from the arccosine sum identity and the fact that arccosine is decreasing. Namely, $\psi(a)\psi(c)-\sqrt{1-\psi(a)^2}\sqrt{1-\psi(c)^2}\le \psi(a)\psi(c)$. Observe, since arccosine is strictly decreasing, that equality occurs if and only if $a=0$ or $c=0$, which correspond to cases (i), (ii). 

Similarly, if initially $c=0$, then the inequality holds and equality is attained if and only if $b=0$ or $d=0$, corresponding to cases (ii), (iii). 

From now on assume $b,c>0$. We will show that the function 
\[
G(a,b,c,d):=h(a,b)+h(b,c)+h(c,d)-h(a,b+c)-h(b+c,d)
\]
is nonnegative. To this end we first show that $\frac{\partial G}{\partial a}>0$ and $\frac{\partial G}{\partial d}>0$ for $a,d\in[0,1)$.

Set $x:=\psi(a)$, $y:=\psi(b)$, $z:=\psi(b+c)$, and observe that $y>z\ge0$ because $c>0$ and $\psi$ is strictly decreasing. Further, observe that $\psi'(t)=\frac{-1}{\psi(t)(1+t)^2}<0$ on $[0,1)$. We have
\begin{align*}
\frac{\partial G}{\partial a}=h_a(a,b)-h_a(a,b+c)=\frac{-\psi'(a)y}{\sqrt{1-x^2y^2}}+\frac{\psi'(a)z}{\sqrt{1-x^2z^2}}>0\\
\Leftrightarrow \frac{-\psi'(a)y}{\sqrt{1-x^2y^2}}>\frac{-\psi'(a)z}{\sqrt{1-x^2z^2}}
\Leftrightarrow \frac{y}{\sqrt{1-x^2y^2}}>\frac{z}{\sqrt{1-x^2z^2}}\\
\Leftrightarrow y^2(1-x^2z^2)>z^2(1-x^2y^2)
\Leftrightarrow y^2>z^2
\end{align*}
which is trivial. By symmetry, $\frac{\partial G}{\partial d}>0$ also.

Since we restrict $G$ to the domain $\Delta:=\{(a,b,c,d)\,:\,a+b+c+d\le1,\,\,\,\, a,b,c,d\in[0,1]\}$, both partial derivatives $\frac{\partial G}{\partial a}$ and $\frac{\partial G}{\partial d}$ being positive in its interior, the continuity of $G$ on $\Delta$ allows us to conclude that
\begin{align}\label{eqtn: G}
G(a,b,c,d)\ge G(0,b,c,d)\ge G(0,b,c,0)
\end{align}
for any point $(a,b,c,d)\in\Delta$. This is because the straight line segment from $(a,b,c,d)$ to $(0,b,c,d)$, and the straight line segment from $(0,b,c,d)$ to $(0,b,c,0)$ are contained in $\Delta$.

So it is enough to show that $G(0,b,c,0)\ge0$, i.e.
\[
\arccos(\psi(b))+\arccos(\psi(b)\psi(c))+\arccos(\psi(c))\ge2\arccos(\psi(b+c))~,
\]
and this follows by Lemma \ref{lemma: arccos psi}.

Finally, for equality to hold in this case, it is necessary that $a=d=0$. If not, then at least one inequality in \eqref{eqtn: G} is strict. Then, by Lemma \ref{lemma: arccos psi}, equality occurs if and only if $b+c=1$. This is precisely case (iv), and we see that equality indeed holds. 
\end{proof}

\begin{proposition}\label{prop: spherical inequality}
    For any $x_1,\dots,x_n\in[0,1]$ with $x_1+\dots+x_n=1$, $n\ge3$,
    \[
    \sum_{k=1}^n h(x_k,x_{k+1})\ge\pi~.
    \]
\end{proposition}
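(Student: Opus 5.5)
We argue by induction on $n$, using Lemma \ref{lemma: arccos triangle ieq} to merge two adjacent entries and reduce the cyclic length by one. The base case $n=3$ is Corollary \ref{cor: arcos n=3} in the $x$-variables. Indeed, for $x_1+x_2+x_3=1$ with $x_k\in[0,1]$, the numbers $r_k=(1+x_k)/4$ lie in $[\frac14,\frac12]$ and sum to $1$. The corollary then gives $F=2\pi$, and by the doubling identity $\arccos(2\psi^2\psi^2-1)=2h$ established above, this says exactly $\sum_{k=1}^3 h(x_k,x_{k+1})=\pi$. So the inequality holds with equality for $n=3$.

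For the inductive step, let $n\ge4$ and $(x_1,\dots,x_n)\in\Delta^n$, indexed cyclically. Choose an index $k$ and apply Lemma \ref{lemma: arccos triangle ieq} with $(a,b,c,d)=(x_{k-1},x_k,x_{k+1},x_{k+2})$. These are four distinct entries because $n\ge4$, so the hypothesis $a+b+c+d\le 1$ holds. The lemma gives
\[
h(x_{k-1},x_k)+h(x_k,x_{k+1})+h(x_{k+1},x_{k+2})\ge h(x_{k-1},x_k+x_{k+1})+h(x_k+x_{k+1},x_{k+2}).
\]
The remaining terms of the cyclic sum are unchanged. Hence $\sum_{j=1}^n h(x_j,x_{j+1})$ is at least the corresponding cyclic sum for the $(n-1)$-tuple obtained by replacing the adjacent pair $x_k,x_{k+1}$ with their sum $x_k+x_{k+1}$. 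This new tuple still lies in $[0,1]^{n-1}$ and sums to $1$, so the induction hypothesis bounds its sum below by $\pi$.

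The only point needing care is $n=4$. There $x_{k-1}$ and $x_{k+2}$ are cyclically adjacent, so the term $h(x_{k+2},x_{k-1})$ must be carried along unchanged; it indeed appears in both the original and the merged $3$-cycle. One should also check that $h$ is symmetric, so that the order of arguments does not matter. Both checks are direct. This bookkeeping is the main thing to verify; the analytic difficulty has already been absorbed into Lemma \ref{lemma: arccos psi} and Lemma \ref{lemma: arccos triangle ieq}.

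If equality cases are needed later (for Proposition \ref{prop: two of length one}), they can be traced back through case (i)–(iv) of Lemma \ref{lemma: arccos triangle ieq} at each merging step. For the inequality itself, the inductive merge suffices.
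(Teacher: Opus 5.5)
Your proof is correct and follows the paper's own argument: induction on $n$, with Corollary \ref{cor: arcos n=3} (translated via $r_k=(1+x_k)/4$ and the doubling identity) as the base case. The inductive step applies Lemma \ref{lemma: arccos triangle ieq} to four cyclically consecutive entries, which merges one adjacent pair and reduces to the $(n-1)$-cycle. The paper does the same step with the specific choice $(a,b,c,d)=(x_{n-2},x_{n-1},x_n,x_1)$, which is your argument up to relabeling.
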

\begin{proof}
    For any positive integer $m\ge3$, and real numbers $y_1,\dots,y_m\in[0,1]$, let 
    \[
    H(y_1,\dots,y_m):=\sum_{k=1}^m h(y_k,y_{k+1})~.
    \]
    
    We induct on $n\ge3$. The base case is Corollary \ref{cor: arcos n=3} and the inductive step uses Lemma \ref{lemma: arccos triangle ieq}. By the induction hypothesis, we have $H(x_1,\dots,x_{n-2},x_{n-1}+x_n)\ge\pi$.
    
    We now show that $H(x_1,\dots,x_n)\ge H(x_1,\dots,x_{n-2},x_{n-1}+x_n)$, i.e.
    \begin{align*}
        \sum_{k=1}^n h(x_k,x_{k+1})\ge \left(\sum_{k=1}^{n-3}h(x_k,x_{k+1})\right)+h(x_{n-2},x_{n-1}+x_n)+h(x_{n-1}+x_n,x_1)\\
        \Leftrightarrow h(x_{n-2},x_{n-1})+h(x_{n-1},x_n)+h(x_n,x_1)\ge h(x_{n-2},x_{n-1}+x_n)+h(x_{n-1}+x_n,x_1)
    \end{align*}
    and this follows by Lemma \ref{lemma: arccos triangle ieq}.
\end{proof}

The proof of Theorem \ref{theorem: minmax d=2} now follows directly from the sketch of its proof in Section \ref{sec:Conjectured optimal value}, the discussion at the beginning of Appendix \ref{sec:appendixC}, and Proposition \ref{prop: spherical inequality}.

We now prove Proposition \ref{prop: two of length one} which served as a stepping stone for the classification of the minimizers for the min-max optimal point configuration problem when $d=2$. 

\begin{proof}[Proof of Proposition \ref{prop: two of length one}]
    Recall that $n+1=S=\sum_{k=1}^n r_k$ and $z_1,\ldots,z_n$ are ordered increasingly by argument. By the proof of Theorem \ref{theorem: minmax d=2}, optimal configurations must satisfy $1\le r_k\le 2$ for $k=1,\dots,n$.
    Let $x_k = r_k-1$. Since $r_k \in [1,2]$ we have $x_k \in [0,1]$ and $\sum_{k=1}^n x_k = 1$. So $(x_1,\dots,x_n)$ is in the probability simplex.
    Moreover, $|z_k|+|z_{k+1}|+|z_k+z_{k+1}|=4$ for $k=1,\dots,n$, otherwise there is at least one strict inequality $|z_k|+|z_{k+1}|+|z_k+z_{k+1}|<4$ leading to the same contradiction of the proof of Theorem \ref{theorem: minmax d=2}. 
    Furthermore, the sum of the angles between all pairs of consecutive vectors must be exactly $2\pi$, i.e., $\sum_{k=1}^n \gamma_k = 2\pi$.
    Since $F(r_1,\ldots,r_n)$ satisfies the chain of inequalities $\sum_{k=1}^n \gamma_k\ge F(r_1,\ldots,z_n)\ge2\pi$, all these inequalities are in fact equalities.
    
    The function $F$ corresponds to the cyclic sum $\sum_{k=1}^n h(x_k, x_{k + 1})$, where 
    \[
    h : [0,1] \times [0,1] \to [0,\frac{\pi}{2}], \quad h(x,y)=\arccos(\psi(x)\psi(y))\]
    was introduced before Lemma \ref{lemma: arccos triangle ieq}. By Lemma \ref{lemma: arccos triangle ieq} we have the inequality: 
    \[ h(x_1,x_2)+h(x_2,x_3)+h(x_3,x_4)+h(x_4,x_1) \geq h(x_1,x_2) + h(x_2,x_3+x_4)+h(x_3+x_4,x_1).\]
    Applying this repeatedly, we get
    \[
    \sum_{k=1}^n h(x_k, x_{k+1}) \geq h(x_1, x_2) + h(x_2, x_3 + \cdots + x_n) + h(x_3 + \cdots +x_n, x_1).
    \]
    By Corollary \ref{cor: arcos n=3} the right hand side of the inequality is exactly $2\pi$, since the arguments $x_1$,$x_2$, and $x_3+\cdots+x_n$ sum to $1$. Thus, $F(r_1,\dots,r_n) \geq 2\pi$ and since we already established that $F(r_1,\dots,r_n) = 2\pi$ holds, the equality condition in Lemma \ref{lemma: arccos triangle ieq} applies. 
    Thus, at least two adjacent variables are zero. Therefore, there exists an index $k$ such that $x_k=0$ and $x_{k+1}=0$. This implies $r_k=r_{k+1}=1$. 
    For such $k$ we have $|z_k|+|z_{k+1}|+|z_k+z_{k+1}|=4$, so $|z_k+z_{k+1}|=2=|z_k|+|z_{k+1}|$ and therefore $z_k=z_{k+1}$.
    By the triangle inequality we obtain that the two vectors of length one are equal. Since the sum of all lengths is $S=n+1$ we find $|z_k|=|z_{k+1}|=\frac{1}{n+1}(|z_1|+\dots+|z_n|)$.
\end{proof}

\subsection*{Acknowledgements}

The authors acknowledge AI assistance (Aristotle, DeepSeek, Gemini) in formulating parts of the proofs of Proposition \ref{proposition: 2-local Fischer}, Theorem \ref{theorem: minmax d=2} and Theorem \ref{thm: optimality of regular n-gon}. The final versions of the proofs were written entirely by the authors. The authors assume responsibility for all content.

\printbibliography
\end{document}